\newtheorem{theorem}{Theorem}[section]
\newtheorem{proposition}[theorem]{Proposition}
\newtheorem{lemma}[theorem]{Lemma}
\theoremstyle{definition}    
\newtheorem{definition}[theorem]{Definition}
\theoremstyle{remark}
\newtheorem{remark}[theorem]{Remark}
\newtheorem{remarks}[theorem]{Remarks}
\newtheorem{example}[theorem]{Example}
\newcommand{\Ra}{\Rightarrow}
\newcommand\A{\mathcal{A}}
\newcommand{\EE}{\mathbb{E}}
\newcommand{\Cour}[1]      {[\![#1]\!]}
\newcommand\G{\mathcal{G}}
\renewcommand{\L}{\mathcal{L}}
\newcommand{\T}{\mathbb{T}}
\newcommand{\ca}{\mathcal}
\newcommand{\R}{\mathbb{R}}
\newcommand\pt{\on{pt}}
\newcommand{\ff}{f}
\newcommand{\hh}{h}
\newcommand\lie[1]{\mathfrak{#1}}
\renewcommand{\k}{\lie{k}}
\newcommand{\g}{\lie{g}}
\renewcommand{\a}{\mathsf{a}}
\newcommand{\on}{\operatorname}
\newcommand{\Ad}{ \on{Ad} }
\newcommand{\ad}{\on{ad}}
\newcommand{\Hom}{ \on{Hom}}
\renewcommand{\ker}{ \on{ker}}
\newcommand{\Mult}{  \on{Mult}}
\newcommand{\Pair}{\on{Pair}}
\newcommand{\id}{\on{id}}
\newcommand{\da}{\dasharrow}
\newcommand\qu{/\kern-.7ex/} % Categorical quotients
\newcommand{\TG}{\mathbb{T}G}
\newcommand{\hra}{\hookrightarrow}
\renewcommand{\d}{{\mbox{d}}}
\newcommand{\ol}{\overline}
\newcommand{\f}{\frac}
\newcommand{\p}{\partial}
\renewcommand{\l}{\langle}
\renewcommand{\r}{\rangle}
\newcommand{\eeq}{\end{eqnarray*}}
\newcommand{\beq}{\begin{eqnarray*}}
\newcommand{\pr}{\on{pr}}
\newcommand{\wh}{\widehat}
\newcommand{\wt}{\widetilde}
\newcommand{\mf}{\mathfrak}
\newcommand{\rra}{\rightrightarrows}
\newcommand{\sz}{\mathsf{s}}
\newcommand{\tz}{\mathsf{t}}
\newcommand{\core}{{\on{core}}}
\newcommand{\LA}{\ca{LA}}
\newcommand{\VB}{\ca{VB}}
\newcommand{\CA}{\ca{CA}}
\newcommand{\gr}{\on{gr}}
\newcommand{\dd}{\mathfrak{d}}
\begin{document}
\sloppy
\title{On the integration of Manin pairs}
\author{David Li-Bland}
\author{Eckhard Meinrenken}

\begin{abstract}
It is a remarkable fact that the integrability of a Poisson manifold to a symplectic groupoid depends only on the 
integrability of its cotangent Lie algebroid $A$: The source-simply connected Lie groupoid $G\rra M$ 
integrating $A$ automatically acquires a multiplicative symplectic 2-form. More generally, a similar result holds for 
the integration of Lie bialgebroids to Poisson groupoids, as well as in the  `quasi' settings of Dirac structures and quasi-Lie bialgebroids. In this article, we will place these results into a general context of Manin pairs
$(\EE,A)$, thereby obtaining a simple geometric approach to these integration results. We also clarify the case where the 
groupoid $G$ integrating $A$ is not source-simply connected.   Furthermore, we obtain a description of Hamiltonian spaces for 
Poisson groupoids and quasi-symplectic groupoids within this formalism. 
\end{abstract}

\maketitle

\tableofcontents
\section{Introduction}
A central theme in Poisson geometry is the notion of a \emph{symplectic groupoid} 
\cite{cos:gro,kar:ana,wei:sym}, given by a Lie groupoid $G\rra M$ together with a 
multiplicative symplectic 2-form $\omega\in \Omega^2(\G)$. 
The units of a symplectic groupoid acquire a unique Poisson structure $\pi_M$, with the property that the target-source map 
\begin{equation}\label{eq:ts}
(\tz,\sz)\colon G\to M\times M\end{equation}
is a Poisson map, using the Poisson structure $(\pi_M,-\pi_M)$ on the pair groupoid $M\times M$. 

Generalizing symplectic groupoids, one has \emph{Poisson Lie groupoids} $(G,\pi_G)$ \cite{wei:coi}, with infinitesimal objects given by Lie bialgebroids $(A,A^*)$. 
Mackenzie--Xu \cite[Theorem 4.1]{mac:int} proved that the integrability of a Lie bialgebroid $(A,A^*)$ to a Poisson Lie groupoid depends only on the integrability of the Lie algebroid $A$.  If it is integrable, then 
its source-simply connected integration $G\rra M$ has a unique multiplicative Poisson structure integrating the Lie bialgebroid. As a special case, the integrability of a Poisson manifold depends only on the integrability of its cotangent Lie algebroid. 
See \cite[Section 14.3]{cra:lec} for a detailed discussion. 

These results generalize to `quasi' settings: Bursztyn, Crainic, Weinstein, and Zhu \cite{bur:int} constructed integrations of (exact) Dirac manifolds to quasi-symplectic groupoids \cite{xu:mom}, while   Iglesias-Ponte, Laurent-Gengoux, and Xu 
\cite{igl:uni} obtained the integration of \emph{quasi-Lie bialgebroids} \cite{roy:qua} to quasi-Poisson groupoids. The special case where $M$ is a point recovers the theory of quasi-Poisson Lie groups as integrations of quasi-Lie bialgebras, developed by Kosmann-Schwarzbach \cite{kos:qua1,kos:jac}. 
%\footnote{\tt Glanon groupoids? (GCS)  https://arxiv.org/abs/1109.5011, holomorphic settings https://arxiv.org/abs/0803.2031 }

In this note, we put these results into the framework of morphisms of Manin pairs.  Recall that a \emph{Manin pair  $(\EE,A)$ over a manifold $M$} \cite{liu:ma}  is a Courant algebroid $\EE\to M$  together with an integrable Lagrangian subbundle $A\subseteq \EE$. The classical Dirac structures on manifolds \cite{cou:di,couwein:beyond} correspond to the  Courant algebroid $\T M=TM\oplus T^*M$. Suppose the Lie algebroid $A$ integrates to a Lie groupoid $G\rra M$  (not necessarily source-simply connected). The Courant algebroid $\T G=TG\oplus T^*G$ has the structure of a  $\CA$-groupoid \cite{lib:cou,meh:qgr}
\[ \T G\rra TM\oplus A^*.\]
%, with units and core given by \[ (\TG)^{(0)}=TM\oplus A^*,\ \ \core(\TG)=\TG|_M/ (\TG)^{(0)}=A\oplus T^*M.\]
Similarly, the pair groupoid is a $\CA$-groupoid, 
$\EE\times\ol{\EE}\rra \EE$.  
The Courant bracket defines a representation of the jet algebroid $\wh{A}=J^1(A)$ on $\EE$; we assume that it integrates  
to a groupoid representation $\wh{g}\mapsto \Ad_{\wh{g}}$ of the jet groupoid $\wh{G}=J^1(G)$, with certain natural 
properties. (See Definition \ref{def:equivariantmaninpair} below.)  Such an integration is automatic whenever $G$ is source-simply connected.
\medskip

\noindent{\bf Theorem A1.} 
\emph{
	Let 	$(\EE,A)$ be a $\wh{G}$-equivariant Manin pair over $M$. Then there is a distinguished  $\wh{G}\times \wh{G}$-equivariant 
	morphism of multiplicative Manin pairs
	\begin{equation}\label{eq:R} R\colon  (\TG,TG)\da (\EE\times\ol{\EE},A\times A),\end{equation}
	with base map $(\tz,\sz)\colon G\to M\times M$. 
}
\medskip

The theorem will be continued below, by providing an explicit formula for $R$. We refer to \eqref{eq:R} as an integration of the Manin pair $(\EE,A)$ since it generalizes the Poisson map \eqref{eq:ts}.
The existence of such a morphism (for the source-simply connected case)  was stated in the doctoral dissertation of the first author \cite[Section 6.2.1]{lib:th}. There, it was deduced from its  infinitesimal counterpart, given by a morphism of  infinitesimally multiplicative Manin pairs
\begin{equation}\label{eq:lib}
R_0\colon (\T A,TA)\da (\on{flip}(T\EE),\on{flip}(TA));\end{equation}
here $\on{flip}$ indicates an interchange of horizontal and vertical structures. 
In turn, $R_0$ was obtained using super-geometric techniques. 

The integration result for  Lie bialgebroids \cite{mac:int} corresponds to the situation that one is given a Dirac structure $B\subset \EE$ complementary to $A$. The pre-image of $B\times B$ under the morphism $R$ is then a multiplicative Dirac structure complementary to $TG\subset \TG$, or equivalently the graph of a multiplicative Poisson structure on $G$. For quasi-Lie algebroids \cite{igl:uni}, the 
complement $B$ need not be integrable, and one arrives at a quasi-Poisson structure. Similarly, if $\EE$ is an \emph{exact} Courant algebroid, the result on integration of twisted Dirac manifolds \cite{bur:int}  is recovered by choice of an isotropic splitting of the anchor map $\a_\EE\colon \EE\to TM$. Indeed, for exact Courant algebroids the morphism $R$ is described by a multiplicative 2-form. In particular, for a Poisson manifold $(M,\pi_M)$ with cotangent Lie algebroid $A=\on{gr}(\pi_M^\sharp)\subset \T M$, the multiplicative morphism of Manin pairs $R$  defines a multiplicative symplectic structure on $G$, in such a way that $(\tz,\sz)$ is a Poisson map. Note that we obtain a mild generalization of these integration results, since $G$ need not be source-simply connected. 

%Let us now turn to the explicit formula for the morphism \eqref{eq:R}. 
Notice that \eqref{eq:R} 
will restrict, in particular, to a comorphism of Lie algebroids $TG\da A\times A$ over $(\tz,\sz)$, which is described on  the level of sections by a Lie algebra 
morphism $\Gamma(A)\times \Gamma(A)\to \mf{X}(G)$. We will see that this is simply the map 
\[ (\tau,\sigma)\mapsto \sigma^L-\tau^R,\]
associating to section of $A\times A$ the corresponding left-invariant and right-invariant vector fields. Our formula for \eqref{eq:R} will extend this relation to the the ambient Courant algebroids. We take its restriction to the units to be the graph of the map 
\[ \alpha\colon (\EE\times \EE)^{(0)}=\EE\to \TG^{(0)}=TM\oplus A^*,\]
given by  $\alpha=(\a_\EE,-\pr_{A^*})$, where $\pr_{A^*}\colon \EE\to \EE/A\cong A^*$ is the projection. On the other hand, 
we have a maps between the cores (see Section \ref{subsec:basic})
\[ \beta\colon  \core(\TG)=A\oplus T^*M\to \core(\EE\times \EE)=\EE,\]
given by $\beta=(\iota_A,-\a_\EE^*)$. 
%(Recall that the core of a $\VB$-groupoid $J\rra J^{(0)}$ over $G\rra M$ is the quotient $(J|_M)/J^{(0)}$).) 
Using this notation, we 
have: \medskip

\noindent{\bf Theorem A2.} 
\emph{
	For $x\in \T G|_g$ and $(\zeta',\zeta)\in (\EE\times\ol{\EE})|_{(\tz(g),\sz(g))}$, we have 
	$x\sim_R (\zeta',\zeta)$ if and only if 
	\begin{equation}\label{eq:explicit0}
	\tz_{\TG}(x)=\alpha(\zeta'),\ \ \ \ \ \beta([l_{\wh{g}^{-1}}(x)])=\zeta-\Ad_{\wh{g}^{-1}}\zeta',\end{equation}
	for any choice of lift $\wh{g}\in \wh{G}$. 
	Here $l_{\wh{g}^{-1}}(x)\in \TG|_M$ denotes the left translate of $x$, and $[l_{\wh{g}^{-1}}(x)]$ 
	its image in $\core(\TG)$. 
}
\medskip

We shall see that \eqref{eq:explicit0}  is the unique multiplicative morphism of Manin pairs that is equivariant for the left $\wh{G}$-action and whose restriction to units is given by $\alpha$. 
One can also use right translation, leading to another version of \eqref{eq:explicit0}. 

\medskip
Similar to the description of quasi-Poisson manifolds in terms of morphisms of Manin pairs \cite{bur:cou}, 
one might take the morphism $R$ itself to be an intrinsic (splitting-free) definition of a quasi-Poisson groupoid (or quasi-symplectic groupoid, in the exact case). According to Theorem A, it is fully encoded in the $\wh{G}$-equivariant Manin pair $(\EE,A)$. 
We define a \emph{Hamiltonian $G$-space} for such a Manin pair  to be a $G$-manifold $P$, with action along a
\emph{moment map} $\Phi\colon P\to M$, together with a 
$\wh{G}$-equivariant  morphism of Manin pairs
\[ L\colon (\T P,TP)\da (\EE,A)\]
having $\Phi$ as its base map.  This is simply the $\wh{G}$-equivariant version of the definition in \cite{igl:ham,lib:cou}. One recovers  Xu's notion of 
Hamiltonian spaces for quasi-symplectic groupoids \cite{xu:mor}, as well as (quasi-)Poisson actions of (quasi-)Poisson groupoids, 
by using the following observation: \medskip

\noindent{\bf Theorem B.} 
\emph{Given a Hamiltonian $G$-space $P$ for a $\wh{G}$-equivariant Dirac structure $(\EE,A)$, the morphism $L$ 
	is multiplicative in the sense that 
	for $x\in \TG,\ y\in \T P, (\zeta',\zeta)\in \EE\times\ol{\EE}$, 
	\[ \Big(x\sim_R (\zeta',\zeta),\ y\sim_L\zeta,\ \sz_{\TG}(x)=\Phi_{\T P}(y)\Big)\ \ \Rightarrow\ \ x\cdot y\sim \zeta'.\]  
	Here the right hand side uses the $\TG\rra TM\oplus A^*$-action on $\T P$ along $\Phi_{\T P}\colon \T P\to TM\oplus A^*$. 
}
\medskip

%To state the result, note that $\T A$ is a double Lie algebroid, with horizontal structure$\T A\Rightarrow TM\oplus A^*$, while $T\EE$ is a double Lie algebroid with horizontal structure $T\EE\Rightarrow TM$. The jet algebroid $\wh{A}=J^1(A)\Rightarrow M$, regarded simply as a vector bundle, acts by (horizontal) translations on $\T A$ and also on $T\EE$. \bigskip\noindent{\bf Theorem B.} {\it 	Let $(\EE,A)$ be a Manin pair. Then there is a unique $\wh{A}$-equivariant morphism of 	infinitesimally multiplicative Manin pairs, 	\begin{equation}\label{eq:R} R_0\colon  (\T A,TA)\da (T\EE,TA),\end{equation}	with base map $\a_A\colon A\to TM$ the Lie algebroid anchor,  in such a way that the relation of Lie algebroid zeroes is the graph of the bundle map  \eqref{eq:unitspaces}.}This result is a mild extension of \cite[Lemma 6.2.1]{lib:th}. In \cite{lib:th}, the morphism $R_0$ was obtained using super-geometric techniques. 
%A related discussion may be found in the paper of the first author with \v{S}evera \cite{lib:qua}. 
\medskip

The paper is organized as follows. Section \ref{sec:vbg} provides background material on $\VB$-groupoids and $\LA$-groupoids. In particular, we discuss the notion of \emph{fat groupoid} for a $\VB$-groupoid. We will also describe a version of Theorem A  for $\VB$-groupoids, which is not difficult to prove but naturally leads to formulas similar to  
\eqref{eq:explicit0}. Section \ref{sec:manin} contains the main theorem on integration of $\wh{G}$-equivariant Manin pairs. 
Checking that $R$ does indeed define a Courant morphism involves some bracket calculations, which are deferred to Appendix \ref{app:A}.  
We also explain the infinitesimally multiplicative version \eqref{eq:lib} from \cite{lib:th}.
Section \ref{sec:special} describes various special cases and applications. In particular, we show that 
for Manin pairs $(M\times \dd,M\times \g)$ coming from actions of Manin pairs of Lie algebras $(\dd,\g)$ (as in \cite{lib:cou}), 
with an integration of the $\g$-action to an action of a Lie group $G$, the integrability is automatic -- even when $G$ is not simply connected. Towards the end of Section \ref{sec:special}, we explain how the integration results for Dirac manifolds 
\cite{bur:int} and (quasi-) Lie bialgebroids \cite{mac:lie,igl:uni} follow from Theorem A. Finally, Section \ref{sec:hamiltonian}  contains a proof of Theorem B and a discussion of Hamiltonian spaces for equivariant Manin pairs, comparing to 
Hamiltonian spaces for quasi-symplectic groupoids. Appendix \ref{app:B}
gives additional information on fat groupoids of $\VB$-groupoids, notably a detailed description of the action of the 
kernel of $\wh{G}\to G$. 

\bigskip
{\bf Acknowledgments.} We thank Daniel \'Alvarez,  
Henrique Bursztyn, Rui Fernandes, and Maria Amelia Salazar  for helpful discussions. E.M.'s research was supported by an NSERC Discovery grant.

\bigskip
{\bf Notation and conventions.}
Lie groupoids are indicated by a double arrow $G\rra M$, with $G$ the manifold of arrows and $M\subset G$ the submanifold of units. We shall also use the notation $M=G^{(0)}$ for the units. The source and target maps are denoted by $\sz,\tz\colon G\to M$, the groupoid multiplication of composable arrows by $g\circ h$, and groupoid inversion by $g\mapsto g^{-1}$. 
The pair groupoid of a manifold $M$ is denoted $\Pair(M)=M\times M\rra M$. Lie algebroids are denoted by an arrow $A\Ra M$; the anchor is denoted by $\a\colon A\to TM$. 
The Lie functor takes a Lie groupoid $G\rra M$ to a Lie algebroid
\[ A=\on{Lie}(G)\Ra M.\]
As a vector bundle, it is the normal bundle $A=\nu(G,M)$ of the units, with 
anchor induced by the difference $\sz_{TG}-\tz_{TG}\colon TG\to TM$ of the source and target maps for the tangent groupoid $TG\rra TM$. 
%For $\sigma\in \Gamma(A,M)$, there is a unique left-invariant vector field $\sigma^L$ (tangent to $\tz$-fiber) such that $\sigma^L|_M\mod TM=\sigma$. Similarly, there is a right-invariant vector field $\sigma^R$ (tangent to $\sz$-fibers). 

\section{$\VB$-groupoids}\label{sec:vbg}
We recall some background material on $\VB$-groupoids, referring to \cite{gra:vb,mac:gen} for more details. 

\subsection{Basic definitions}\label{subsec:basic}
A \emph{$\VB$-groupoid} is a Lie groupoid $J\rra B$, where the manifold of arrows $J$ has the structure of a vector bundle 
$J\to G$, in such a way that the graph of the groupoid multiplication is a sub-vector bundle $\gr(\Mult_J)\subset J\times J\times J$. (Equivalently, the scalar multiplication of $J$ is by groupoid morphisms.) This implies that base manifold is itself a Lie groupoid $G\rra M$, 
and that the units are a sub-vector bundle $B\to M$. 
The situation is depicted as  
\begin{equation}\label{eq:vbg}
{\begin{tikzcd}
	[column sep={8em,between origins},row sep={4.5em,between origins},]
	J\arrow[r,shift right] \arrow[r,shift left]\arrow[d]& B\arrow[d] \\
	G\arrow[r,shift right] \arrow[r,shift left]& M
	\end{tikzcd}}
\end{equation}
The \emph{core} of the $\VB$-groupoid is the quotient bundle 
\[ C=\core(J)=J|_M/B.\] 
Let $\sz_J,\tz_J\colon J\to B$ denote the source and target maps. Their difference vanishes on $B\subset J|_M$, and so descends to a bundle map 
\begin{equation}\label{eq:coreanchor}
\varrho\colon \core(J)\to B, \ \ [x]\mapsto \sz_J(x)-\tz_J(x)
\end{equation}
called the \emph{core-anchor} \cite{gra:vb}. We shall often identify $\core(J)\cong \ker(\tz_J)|_M$; in terms of this identification, $J|_M\cong C\oplus B$. 

The dual bundle of any $\VB$-groupoid $J\rra B$ with $\core(J)=C$ 
is a $\VB$-groupoid
$J^*\rra C^*$ with the unique groupoid  multiplication such that the pairing gives a groupoid morphism $J^*\oplus J\to \R$. 
Here $\core(J^*)=B^*$, and the core-anchor of $J^*$ is the dual $\varrho^*\colon B^*\to C^*$ of \eqref{eq:coreanchor}. 
%\footnote{For $\mu \in J^*|_M$ and $v\in J|_M$ one finds 	$\l t_{J^*}(\mu),v\r=\l\mu,v-s_J(v)\r$ and $\l s_{J^*}(\mu),v\r=\l \mu,v-\tz_J(v)\r$. Taking the difference, the claim for core-anchors follows.} 

An \emph{$\LA$-groupoid} 
%\[{\begin{tikzcd}	[column sep={8em,between origins},row sep={4.5em,between origins},]	J\arrow[r,shift right] \arrow[r,shift left]\arrow[d,Rightarrow]& B\arrow[d,Rightarrow] \\	G\arrow[r,shift right] \arrow[r,shift left]& M	\end{tikzcd}}\] 
%
is a $\VB$-groupoid with a vertical Lie algebroid structure $J\Ra G$,  
which is compatible with the groupoid structure in the sense that the graph of the groupoid multiplication 
$\gr(\Mult_J)\subset J\times J\times J$ 
is a Lie subalgebroid. The units of $J$ are a Lie subalgebroid $B\Ra M$, and  the core is  a Lie algebroid $C\Ra M$, with bracket on sections defined by the identification with left-invariant sections of $J$. (See below.) 
The core-anchor of an $\LA$-groupoid is a Lie algebroid morphism.

We shall mainly deal with the following examples of $\VB$-groupoids:
\begin{enumerate}

		\item\label{it:b}  The pair groupoid  $\on{Pair}(V)\rra V$ of a 
		 vector bundle $V\to M$  is a 
	$\VB$-groupoid; here $\core(\on{Pair}(V))=V$, with core-anchor the identity map. 
	%We have $\ker(\tz_{\Pair(V)})|_M=0\oplus V$; 
	The core projection $\Pair(V)|_M=V\oplus V\to V$ is given by $(v',v)\mapsto [(v',v)]=v-v'$. 
	%The dual $\VB$-groupoid is $\Pair(V)^*\cong \Pair(V^*)$, for the pairing \footnote{\tt We might also use the opposite pairing?? It probably doesn't matter for our discusson}\[ \l (\tau',\tau),(\zeta',\zeta)\r=\l\tau,\zeta\r-\l\tau',\zeta'\r.\]  
	If $A\Ra M$ is a Lie algebroid, then $\Pair(A)\rra A$ is an $\LA$-groupoid. 
	
		\item
	\label{it:a} The tangent bundle $TG\rra TM$ of a Lie groupoid $G\rra M$ is an $\LA$-groupoid, with 
	$\core(TG)=A$ the associated Lie algebroid $A\Ra M$, and core-anchor the usual Lie algebroid anchor $\a_A$. 
	The dual bundle is the $\VB$-groupoid $T^*G\rra A^*$, with $\core(T^*G)=T^*M$, and core-anchor $\a_A^*$.

\end{enumerate}

\subsection{Left and right actions}
The left multiplication of the groupoid $G\rra M$ on itself, $h\mapsto l_g(h)=g\circ h$, lifts to  
an action $x\mapsto l_g(x)=0_g\circ x$ on $\ker(\tz_J)$. This gives a bundle map $\ker(\tz_J)\to C=\core(J)$, with base map $\sz\colon G\to M$, taking $x\in \ker(\tz_J)|_g$ to $[l_{g^{-1}}x]\in C$. 
Similarly, using right translations we obtain a bundle map $\ker(\sz|_J)\to C$ with base map $\tz_J$. 
Both of these maps are fiberwise isomorphisms, resulting in bundle isomorphisms
\begin{equation}\label{eq:kert}
\ker(\tz_J)\cong \sz^*C,\ \ \ 
\ker(\sz_J)\cong \tz^*C.
\end{equation}
For sections $\sigma\in \Gamma(C)$, we denote by 
\[ \sigma^L\in \Gamma(\ker(\tz_J))\subset \Gamma(J),\ \ 
\sigma^R\in \Gamma(\ker(\sz_J))\subset \Gamma(J)\] 
the sections defined by these identifications.  Similarly, given $g\in G$ and elements  $\zeta\in C_{\sz(g)}$ and 
$\zeta'\in C_{\tz(g)}$, denote by  
\[ \zeta^L_g\in \ker(\tz_J)_g,\ \ \ \ (\zeta')^R_g\in\ker(\sz_J)_g\]
the elements such that $[l_{g^{-1}} (\zeta^L_g)]=\zeta,\ \ [r_{g^{-1}}((\zeta')^R_g)]=\zeta'$.  
Then 
\begin{equation}\label{eq:mult}
(\zeta^R_g)\circ 0_h=\zeta^R_{gh}, \ \ \  0_g\circ \zeta^L_h=
\zeta^L_{gh},\ \ \ \zeta^L_g\circ(-\zeta)^R_h=0_{gh}\end{equation}
under the groupoid multiplication for $J$. Furthermore, $\sz_J(\zeta_g^L)=\varrho(\zeta),\ \tz_J((\zeta')^R_g)=-\varrho(\zeta')$. 

If $J$ is an $\LA$-groupoid, one obtains a Lie algebroid structure on $C$ by the identification with left-invariant sections. This gives the  bracket relations
\begin{equation}\label{eq:bracket}
[\sigma_1^L,\sigma_2^L]=[\sigma_1,\sigma_2]^L,\ \ 
[\sigma_1^R,\sigma_2^R]=- [\sigma_1,\sigma_2]^R,\ \ 
[\sigma_1^L,\sigma_2^R]=0.
\end{equation}
For $J=TG$ this is the given Lie algebroid structure on 
$A=\on{Lie}(G)$.

\subsection{The fat groupoid}\label{subsec:fat}
Given a $\VB$-groupoid $J$ as above, there is an exact sequence of groupoids over $M$  
\[ 1\to K\to \wh{G}\to G\to 1.\]
Here the \emph{fat groupoid} \cite{gra:vb} $\wh{G}\rra M$  consists of elements $g\in G$ together 
with linear subspaces 
$\wh{g}\subset  J|_g$ that are complements to both $\ker(\tz_J)|_g$ and $\ker(\sz_J)|_g$. It is an open subset of the Grassmannian bundle of $\dim M$-dimensional subspaces, and hence is a manifold. 
The source and target of $\wh{g}$ 
are the source and target of  $g$, and for composable elements, 
\[ \wh{g}\circ \wh{h}=\{v\circ w|\ v\in \wh{g},\ w\in \wh{h}\mbox{ are composable in $J$}\}.\]
The subgroupoid $K$ consists of elements $\wh{g}\in \wh{G}$ such that $g=m$ is a unit; 
it is a family of Lie groups $K_m,\ m\in M$. We may regard $K$ as the fat groupoid of the restriction $J|_M\cong C\oplus B\rra B$. 
The groupoid $K\rra M$, is entirely described in terms of the core-anchor: 
\begin{equation}\label{eq:K} 
K_m=\{f\in \Hom(B_m,C_m)|\ \id_B+\varrho\circ \ff\mbox{ is invertible}\},\end{equation}
with product $\ff_1\circ \ff_2=\ff_1+\ff_2\circ (\id_B+\varrho\circ \ff_1)$. See Appendix \ref{app:B} for a detailed discussion.

%$\wh{g}\circ \wh{h}\subset J|_{gh}$ consists of all $v\circ w$ for composable $v\in \wh{g},\ w\in \wh{h}$. 

%

%The Lie algebra is $\Hom(B,C)$ with the bracket $[\psi_1,\psi_2]=\psi_2\circ \varrho\circ \psi_1-\psi_1\circ \varrho\circ \psi_2$. 

\begin{example}\label{ex:fat}
	\begin{enumerate}
		\item The fat groupoid of $\Pair(V)\rra V$ is 
		the gauge groupoid of $V$. The subgroupoid $K$ is the general linear bundle, $K=\bigsqcup_{m\in M}\on{GL}(V_m)$.  
		\item 	For $TG\rra TM$, the fat groupoid is the jet groupoid $J^1(G)\rra M$. 
		\item The fat groupoid for $J$ is canonically isomorphic to that of $J^*$, by the map taking a subspace of $J|_g$ to its annihilator in $J^*|_g$.   
	\end{enumerate}
\end{example}

The fat groupoid comes with natural actions 
\begin{equation}\label{eq:manyactions}
\wh{G}\times \wh{G}\circlearrowright J,\ \ \ \ \wh{G}\circlearrowright J|_M,\ \ \ \ 
\wh{G}\circlearrowright J^{(0)},\ \ \ \ \wh{G}\circlearrowright \core(J).
\end{equation}
The first of these actions is given by $(\wh{g},\wh{h})\cdot x=l_{\wh{g}} (r_{\wh{h}^{-1}}(x))$. 
Here
\[ l_{\wh{g}}(x)=v\circ x,\]  
with the unique element $v\in \wh{g}\subset J_g$  that is composable with 
$x\in J$. Right translations are defined similarly. This $\wh{G}\times \wh{G}$-action restricts to a diagonal $\wh{G}$-action on $J|_M$, denoted by $\Ad$. For $x\in J|_m$ and $\sz(\wh{g})=m$ we have 
\[\Ad_{\wh{g}}\cdot x=l_{\wh{g}}r_{\wh{g}^{-1}}(x)\in J_{g\cdot m}.\]
The action preserves the units $J^{(0)}=B$,  
and so descends to an action, still denoted $\Ad$, on $C=\core(J)$. 
The various $\VB$-groupoid structure maps behave naturally for these actions:
\[ \tz_J(l_{\wh{g}}(x))=\Ad_{\wh{g}}\tz_J(x),\ \ \tz_J(r_{\wh{g}^{-1}}(x))=\tz_J(x),\]
\[ \sz_J(l_{\wh{g}}(x))=\sz_J(x),\ \  \sz_J(r_{\wh{g}^{-1}}(x))=\Ad_{\wh{g}}\sz_J(x),\]
\[ \varrho(\Ad_{\wh{g}}x)=\Ad_{\wh{g}}\varrho(x).\]
It is also clear that the left-action of $\wh{G}$ on $J$ extends the left-action of $G$ on $\ker(\tz_J)$, and similarly for the right action. Hence, 
\[ l_{\wh{g}}(\zeta^L_h)=\zeta^L_{gh},\ \ \ 
\ r_{\wh{g}^{-1}}(\zeta^L_{h'})=(\Ad_{\wh{g}}(\zeta))^L_{h' g^{-1}}
\]
and similarly for $(\zeta')^R_g$.

\subsection{A canonical comorphism}
We now describe a multiplicative comorphism for $\VB$-groupoids, which features many of the main ingredients of the integration problem for Manin pairs. (See Cattaneo-Dherin-Weinstein \cite{cat:int1} for background material on  comorphisms of vector bundles, Lie algebroids and Lie groupoids. In our discussion, the `comorphism' refers to the vertical structure.)  
By a \emph{multiplicative $\VB$-comorphism} $S\colon J_1\da  J_2$ between two $\VB$-groupoids, 
we mean a  vector bundle comorphism along some base map $\Phi\colon G_1\to G_2$ (i.e, given by a bundle map $\Phi^*J_2\to J_1$) whose graph $S\subset J_2\times J_1$ is a subgroupoid for the horizontal structure.  
From the definitions, one readily checks that this is equivalent to the dual map $S^*\colon J_1^*\to J_2^*$ being a $\VB$-groupoid morphism (i.e., a vector bundle map that is also a groupoid morphism).

\begin{proposition}\label{prop:vbgroupoids}
For every $\VB$-groupoid $J\rra B$, with $\core(J)=C$, there is a canonical $\wh{G}\times \wh{G}$-equivariant multiplicative $\VB$-comorphism 
\[{\begin{tikzcd}		[column sep={8em,between origins},row sep={4.5em,between origins},]	J\arrow[r,shift right] \arrow[r,shift left]\arrow[d]& B\arrow[d] \\	G\arrow[r,shift right] \arrow[r,shift left]& M	\end{tikzcd}}
\stackrel{S}{\dasharrow}
{\begin{tikzcd}		[column sep={8em,between origins},row sep={4.5em,between origins},]	\Pair(C)\arrow[r,shift right] \arrow[r,shift left]\arrow[d]& C\arrow[d] \\	\Pair(M)\arrow[r,shift right] \arrow[r,shift left]& M	\end{tikzcd}}
\] 
with base map $(\tz,\sz)\colon G\to \Pair(M)$. 
Explicitly,
\begin{equation}\label{eq:explicit-1}
 x\sim_S (\zeta',\zeta)\  \Leftrightarrow \ x=\zeta_g^L-(\zeta')^R_g\end{equation}
where $g$ is the base point of $x\in J$. If $J$ is an $\LA$-groupoid, then $S$ is a Lie algebroid comorphism over $(\tz,\sz)$. Accordingly, its dual  $S^*\colon J^*\to \Pair(B^*)$ is a morphism of Poisson 
$\VB$-groupoids. 
%The set of units of $S$ (seen as a subgroupoid)is the graph of the core-anchor $\varrho\colon C\to B$, while $\core(S)= C$ (embedded diagonally). 
\end{proposition}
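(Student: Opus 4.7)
The plan is to verify four claims in turn: (a) $\psi_g(\zeta',\zeta) := \zeta^L_g - (\zeta')^R_g$ defines a smooth vector bundle comorphism; (b) the graph $S\subset\Pair(C)\times J$ is a subgroupoid for the horizontal structure; (c) $\wh{G}\times\wh{G}$-equivariance; (d) in the $\LA$-groupoid case, $S$ is a Lie algebroid comorphism whose dual is a Poisson $\VB$-groupoid morphism. Step (a) is immediate from the linearity of the identifications \eqref{eq:kert}, and equivariance in step (c) follows directly from $l_{\wh{g}}(\zeta^L_h) = \zeta^L_{gh}$, $r_{\wh{g}^{-1}}(\zeta^L_{h'}) = (\Ad_{\wh{g}}\zeta)^L_{h'g^{-1}}$, and the analogues for $\zeta^R$.

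For step (b), the unit check unwinds the embeddings $C\cong \ker(\tz_J)|_M$ (giving $\zeta^L_m = (0,\zeta)\in B\oplus C$) and $\ker(\sz_J)|_M = \{(-\varrho(c),c)\}\subset B\oplus C$ (giving $\zeta^R_m = (-\varrho(\zeta),\zeta)$), so that $\psi_m(\zeta,\zeta) = \varrho(\zeta)\in B|_m$ is a unit of $J$. The main computation is closure under composition. Given $((\zeta',\zeta),x)\in S$ at $g$ and $((\zeta,\eta),x')\in S$ at $h$, I would decompose the composable pair $(x,x')\in (J\times_B J)|_{(g,h)}$ as
\[
(x,x') = (\zeta^L_g,\, -\zeta^R_h) + (0_g,\, \eta^L_h) + (-(\zeta')^R_g,\, 0_h),
\]
each summand being composable by the source-target formulas (in particular, $\tz_J(\zeta^R_h) = -\varrho(\zeta)$ makes $(\zeta^L_g,-\zeta^R_h)$ composable). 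Since multiplication $J\times_B J\to J$ is fiberwise linear on composable pairs, \eqref{eq:mult} applied summandwise gives
\[
x\circ x' = 0_{gh} + \eta^L_{gh} - (\zeta')^R_{gh} = \psi_{gh}(\zeta',\eta),
\]
as required. Closure under inversion then follows from linearity of VB-groupoid inversion together with $(\zeta^L_g)^{-1} = -\zeta^R_{g^{-1}}$ and $((\zeta')^R_g)^{-1} = -(\zeta')^L_{g^{-1}}$, both easy consequences of \eqref{eq:mult}.

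For step (d), the induced map on sections $\Gamma(\Pair(C))\to\Gamma(J)$, $(\tau,\sigma)\mapsto \sigma^L - \tau^R$, is a Lie algebra morphism by \eqref{eq:bracket}:
\[
[\sigma_1^L - \tau_1^R,\, \sigma_2^L - \tau_2^R]_J = [\sigma_1,\sigma_2]^L - [\tau_1,\tau_2]^R,
\]
matching $\psi$ applied to $([\tau_1,\tau_2],[\sigma_1,\sigma_2])$ in $\Pair(C)$. Anchor compatibility follows from $\sz_J\circ\psi_g(\zeta',\zeta) = \varrho(\zeta)$ together with the LA-groupoid factorization $\a_C = \a_B\circ\varrho$; the dual then becomes a Poisson $\VB$-groupoid morphism by standard duality. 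The main obstacle I anticipate is the three-term decomposition in step (b): a naive bilinear expansion of $(\zeta^L_g - (\zeta')^R_g)\circ(\eta^L_h - \zeta^R_h)$ as four products fails because cross terms such as $\zeta^L_g\circ\eta^L_h$ and $(\zeta')^R_g\circ\zeta^R_h$ involve non-composable pairs, so the specific splitting above—in which each summand is both composable and directly covered by \eqref{eq:mult}—is the single non-trivial insight.
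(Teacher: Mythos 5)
Your proof is correct, but it takes a genuinely different route from the paper's. The paper \emph{defines} $S$ as the dual of the $\VB$-groupoid morphism $(\tz_{J^*},\sz_{J^*})\colon J^*\to \Pair(C^*)$; multiplicativity and $\wh{G}\times\wh{G}$-equivariance are then automatic (a target--source map is always a groupoid morphism to the pair groupoid, and is equivariant by construction), and the explicit formula \eqref{eq:explicit-1} is \emph{derived} afterwards from the pairing identities $\l \tz_{J^*}(y),\zeta'\r=\l y,(\zeta')^R_g\r$ and $\l \sz_{J^*}(y),\zeta\r=\l y,\zeta_g^L\r$. You instead take \eqref{eq:explicit-1} as the definition and verify the subgroupoid property by hand; your three-term splitting $(x,x')=(\zeta^L_g,-\zeta^R_h)+(0_g,\eta^L_h)+(-(\zeta')^R_g,0_h)$ is exactly the right device, since each summand is composable and covered by one of the identities \eqref{eq:mult}, while the naive four-term bilinear expansion would indeed involve non-composable pairs -- this is the one genuinely non-trivial point of the direct approach, and you handle it correctly (your unit and inversion computations also check out). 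What each approach buys: the duality argument is shorter, makes the last sentence of the proposition (that $S^*$ is the target--source morphism of $J^*$) true by definition rather than a further check, and hands the Poisson-groupoid statement to the standard fact that target--source maps of Poisson groupoids are Poisson; your direct argument is more self-contained, produces the explicit formula with no unwinding, and makes the structural content of \eqref{eq:explicit-1} transparent. For the $\LA$-case both proofs use the bracket relations \eqref{eq:bracket} on sections $\sigma^L-\tau^R$ in the same way; one small imprecision on your side is the anchor compatibility, which should be phrased in terms of the vertical Lie algebroid anchor $\a_J\colon J\to TG$ being $(\tz,\sz)$-related to the product anchor of $\Pair(C)$ (using that $\a_J$ sends $\sigma^L$, $\tau^R$ to left-, right-invariant vector fields), rather than in terms of the groupoid source $\sz_J$ -- but this is a routine fix, not a gap.
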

\begin{proof}
Define $S$ as the dual of the 	$\VB$-groupoid morphism $(\tz_{J^*},\sz_{J^*})\colon J^*\to \Pair(C^*)$. 
Since $S^*$  is $\wh{G}\times\wh{G}$-equivariant, the same is true of $S$.  
By definition of the morphism $S$ via duality, we have that 
$x\sim_S(\zeta',\zeta)$ 
if and only if for all $y\in J^*$, having the same base point $g\in G$ as $x$, 
\[ \l y,\,x\r=\l \sz_{J^*}(y),\zeta\r-\l \tz_{J^*}(y),\zeta'\r.
\]
But $\l \tz_{J^*}(y),\zeta'\r=\l y,\,(\zeta'_g)^R\r,\ \l \sz_{J^*}(y),\zeta\r=\l y,\,\zeta_g^L\r$. 
This gives \eqref{eq:explicit-1}. 
%Regarding $S^*$ as a $\VB$-subgroupoid of $\Pair(C^*)\times J^*$, its space of units is the graph of the map identity map $(J^*)^{(0)}=C^*\to \Pair(C^*)^{(0)}=C^*$, while its core is the graph of the map $\core(J^*)=\core(J^*)|_M/C^*=B^*\to \core\Pair(C^*)=C^*$, which is the dual core-anchor $\varrho_{J^*}=\varrho_J^*$. Dualizing, it follows that $\core(S)$ is the graph of $\on{id}_C\colon C\to C$, while $S^{(0)}$ is the graph of $\varrho$. 
On the level of sections, $S$ is spanned by restrictions of sections 
\begin{equation}\label{eq:pairsections} \big((\sigma',\sigma),\sigma^L-(\sigma')^R\big)
\end{equation} 
of $\Pair(C)\times J$ over $\Pair(M)\times G$
with $\sigma,\sigma'\in \Gamma(J)$.
If $J$ is an $\LA$-groupoid, the bracket relations \eqref{eq:bracket} show that the Lie bracket of two sections of the form \eqref{eq:pairsections} is again of this form. Hence, $S$ is a sub-Lie algebroid. (Alternatively, in terms of $J^*$, we may use the fact that for every Poisson groupoid the target-source map is a Poisson map.) 
\end{proof}

\begin{example}
	Suppose $G\rra M$ is a Lie groupoid, with Lie algebroid $A$, and $TG\rra TM$ its tangent bundle.  
	The multiplicative $\LA$-comorphism  $TG\da \Pair(A)$ is dual to the morphism of Poisson groupoids 
	$T^*G\to \Pair(A^*)$ given by   
	the target-source map for the symplectic groupoid $T^*G\rra A^*$. \end{example}

Using the equivariance property, we can left translate $x\in J|_g$ by any choice of lift $\wh{g}\in \wh{G}$ 
to an element in $J|_M$, and 
decompose into its components in $J^{(0)}$ and $\ker(\tz_J)|_M\cong \core(J)$. We may thus 
re-write 
\eqref{eq:explicit-1} as 
\begin{equation}\label{eq:alt}
	x\sim_S (\zeta',\zeta)\ \  \Leftrightarrow\ \  \tz_J(x)=\varrho(\zeta'),\ \ \
[l_{\wh{g}^{-1}}(x)]=
\zeta-\Ad_{\wh{g}^{-1}}(\zeta'),\end{equation}
similar to the statement of Theorem A.

%\begin{remark}In addition to the $\VB$-comorphism $S$, there is also a natural $\wh{G}\times \wh{G}$-equivariant $\VB$-morphism $(\tz_J,\sz_J)\colon J\to \Pair(B)$, with base map $(\tz,\sz)$. 	The induced map $\core(J)\to \core(\Pair(B))=B$ is just the core-anchor. The two (co)-morphisms are related: The dual of the $\VB$-comorphism $ J\da \Pair(C)$ is the $\VB$-morphism \[ (\tz_{J^*},\sz_{J^*})\colon J^*\to \Pair(C^*).\] \end{remark}

\section{Integration of Manin pairs}\label{sec:manin}
Our conventions for Courant algebroids and Dirac structures follow 
\cite{bur:cou,lib:cou,lib:dir}. In particular, we work with the non-skew symmetric version of the Courant bracket
$\Cour{\cdot,\cdot}$ on $\Gamma(\EE)$, also  known as the Dorfman bracket. The \emph{standard Courant algebroid} over $M$ is denoted $\T M=TM\oplus T^*M$, 
with metric given by the pairing of vectors and covectors, with anchor the projection to the first summand, 
and with bracket 
\begin{equation}\label{eq:CA}
 \Cour{X_1+\mu_1,X_2+\mu_2} =[X_1,X_2]+\L_{X_1}\mu_2-\iota_{X_2}\d\mu_1\end{equation}
for vector fields $X_1,X_2$ and one-forms $\mu_1,\mu_2$.

\subsection{Manin pairs}%
In the work of Courant and Weinstein \cite{cou:di,couwein:beyond}, Dirac structures on manifolds $M$ were defined as (wide) Lagrangian subbundles of $\T M$ whose space of sections is closed under the Courant bracket. 
Liu-Weinstein-Xu \cite{liu:ma} generalized $\T M$  to the notion of \emph{Courant algebroid}; 
the pair $(\EE,A)$ of a Courant algebroid $\EE\to M$ together with a Dirac structure $A\subset \EE$ 
is called a \emph{Manin pair} over $M$.
%We shall also refer to the pair $(\EE,A)$ itself as a \emph{Dirac structure} over $M$. 
For $M=\pt$, one recovers the classical notion of a Manin pair of Lie algebras, as introduced by Drinfeld.

\begin{definition} \label{def:mor}\cite{al:der,bur:cou,sev:let}
A \emph{Courant morphism}   $R\colon \EE_1\da \EE_2$, with base map $\Phi\colon M_1\to M_2$, is a 
Lagrangian subbundle $R\subset \EE_2\times \ol{\EE_1}$ along $\on{gr}(\Phi)\subset M_2\times M_1$, with image under the anchor tangent to 
$\on{gr}(\Phi)$, and such that the space of sections of $\EE_2\times \ol{\EE_1}$ that restrict to sections of $R$ is closed under the Courant bracket. 
A (strong) \emph{Dirac morphism} \cite{al:pur,me:lec} or \emph{morphism of Manin pairs} \cite{bur:cou}
\[ R\colon (\EE_1,A_1)\da (\EE_2,A_2),\]
with base map $\Phi\colon M_1\to M_2$, is a Courant morphism $R\colon \EE_1\da \EE_2$ satisfying 
\begin{equation}\label{eq:condition}  R(A_1)\subseteq A_2,\ \ R^{-1}(0)\cap A_1=0.\end{equation}
\end{definition}

Here $R(A_1)$ is the image of $A_1$ under the relation, and $R^{-1}(0)=\ker(R)$ the pre-image of the zero subbundle $0\subset \EE_2$.  The condition \eqref{eq:condition} means that for all $m\in M_1$, every element $y_2\in A_2|_{\Phi(m)}$ is $R$-related to a \emph{unique} element  $y_1\in A_1|_{m}$. A  morphism of Manin pairs gives, in particular, a comorphism of Lie algebroids 
$ A_1\da A_2$
with base map $\Phi\colon M_1\to M_2$, and hence a Lie algebra morphism $\Gamma(A_2)\to \Gamma(A_1)$. Unlike Courant morphisms, morphisms of Manin pairs can always be composed.  Another basic fact (see e.g., \cite{al:pur})
is that  whenever $B_2\subset \EE_2$ is a 
Lagrangian subbundle (resp., Dirac structure) complementary to $A_2$, then its pre-image $B_1=R^{-1}(B_2)$ is a
Lagrangian subbundle (resp., Dirac structure) complementary to $A_1$. 

\begin{example}
A map $\Phi\colon M_1\to M_2$ between Poisson manifolds $(M_i,\pi_i)$ is Poisson if and only if 
it   defines a morphism of Manin pairs 
\[ \T\Phi\colon (\T M_1,\on{gr}(\pi_1^\sharp))\da (\T M_2,\on{gr}(\pi_2^\sharp)),\] 
where 
the relation $\T\Phi$ is defined as the sum of the (graphs of)  tangent and cotangent maps. 
\end{example}

\subsection{Multiplicative Manin pairs}
A \emph{$\CA$-groupoid} \cite{lib:dir,meh:qgr} $\EE\rra \EE^{(0)}$ 
is a $\VB$-groupoid with a vertical Courant algebroid structure over its base $G\rra G^{(0)}$,
in 
such a way that the  graph of the groupoid multiplication $\on{gr}(\Mult_\EE)\subset \EE\times \ol{\EE}\times \ol{\EE}$ 
is a Dirac structure with support on $\on{gr}(\Mult_G)\subset G\times G\times G$. Examples include the pair groupoid of a 
given Courant algebroid, as well as the standard Courant algebroid over a Lie groupoid. 
Some basic facts about $\CA$-groupoids (see  \cite[Proposition 2.3]{lib:dir}):
\begin{itemize}
	\item[(i)] The space of units $\EE^{(0)}$ is a Dirac structure with support on $G^{(0)}$. In particular, the 
	 metric induces a nondegenerate pairing between $\EE^{(0)}$ and $\EE|_M/\EE^{(0)}=\core(\EE)$. 
	\item[(ii)] The subbundles $\ker(\tz_\EE),\ \ker(\sz_\EE)$ are orthogonal for the metric on $\EE$. 
\end{itemize} 
A \emph{morphism of $\CA$-groupoids}
\begin{equation}\label{eq:morCA}
{\begin{tikzcd}		[column sep={8em,between origins},row sep={4.5em,between origins},]	\EE_1\arrow[r,shift right] \arrow[r,shift left]\arrow[d]& \EE_1^{(0)}\arrow[d] \\	G_1\arrow[r,shift right] \arrow[r,shift left]& M_1	\end{tikzcd}}
\stackrel{R}{\dasharrow}
{\begin{tikzcd}		[column sep={8em,between origins},row sep={4.5em,between origins},]	\EE_2\arrow[r,shift right] \arrow[r,shift left]\arrow[d]& \EE_2^{(0)}\arrow[d] \\	G_2\arrow[r,shift right] \arrow[r,shift left]& M_2	\end{tikzcd}}
\end{equation}
is a 
$\CA$-morphism $\EE_1\da \EE_2$ (given by a Dirac structure $R\subset \EE_2\times \ol{\EE}_1$ 
with support on the graph of the base map $\Phi\colon G_1\to G_2$) such that 
$R\subset \EE_2\times\ol{\EE}_1$ is a subgroupoid. In particular, the base map  $\Phi$ is a 
groupoid morphism.

\begin{lemma}\label{lem:think}
Let \eqref{eq:morCA} be a morphism of $\CA$-groupoids,  and suppose that the induced relation of units is a $\VB$-comorphism $\alpha\colon \EE_1^{(0)}\da \EE_2^{(0)}$. 
Then the induced relation of cores is the $\VB$-morphism 
$\alpha^*\colon \core(\EE_1)\to \core(\EE_2)$ dual to $\alpha$. 
\end{lemma}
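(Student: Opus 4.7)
The plan is to reduce the claim to pointwise linear algebra of Lagrangian subspaces. Fix $m\in M_1$, set $m_2 = \Phi(m)$, and write $V_i = \EE_i|_{m_i}$, $U_i = \EE_i^{(0)}|_{m_i}$, and $C_i = \core(\EE_i)|_{m_i} = V_i/U_i$. Property (i) above says that $U_i$ is Lagrangian in $V_i$, so the metric induces a perfect pairing $U_i \otimes C_i \to \R$. At the fiber over $m$, $R_m \subset V_2 \oplus V_1^-$ is Lagrangian (with $V_1^-$ denoting $V_1$ equipped with the sign-reversed metric), and $U_2 \oplus U_1$ is a second Lagrangian.

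Next I would realize $\core(R)|_m$ explicitly as a subspace of $C_2 \oplus C_1$. The projection $V_2 \oplus V_1 \to C_2 \oplus C_1$ restricts to a map $R_m \to C_2 \oplus C_1$ whose kernel is $R_m \cap (U_2 \oplus U_1) = R^{(0)}_m$. Hence the induced map $\core(R)|_m = R_m/R^{(0)}_m \hookrightarrow C_2 \oplus C_1$ is injective, with image $\bigl(R_m + (U_2 \oplus U_1)\bigr)/(U_2 \oplus U_1)$. The standard Lagrangian identity $L_1 + L_2 = (L_1 \cap L_2)^\perp$, applied to $R_m$ and $U_2 \oplus U_1$ in the quadratic space $V_2 \oplus V_1^-$, identifies this image with the annihilator of $R^{(0)}_m$ inside $C_2 \oplus C_1$ for the pairing between units and cores.

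To conclude, I would feed in the hypothesis that $R^{(0)}_m$ is the graph of a linear map $\alpha_m\colon U_2 \to U_1$, which is what it means for the induced relation on units to be a $\VB$-comorphism with base $\Phi|_{M_1}$. A pair $(c_2, c_1)\in C_2\oplus C_1$ then annihilates $R^{(0)}_m$ iff $\langle u_2, c_2\rangle_2 = \langle \alpha_m u_2, c_1\rangle_1$ for every $u_2 \in U_2$, and perfectness of the pairings between $U_i$ and $C_i$ turns this into the single equation $c_2 = \alpha_m^* c_1$. Thus $\core(R)|_m = \on{gr}(\alpha_m^*)$, which is the graph of the dual $\VB$-morphism $\alpha^*\colon \core(\EE_1) \to \core(\EE_2)$, as claimed. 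Smoothness in $m$ is automatic since every step is functorial in the fiber.

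The substantive step is the Lagrangian identity, but this is standard and essentially automatic once the quadratic space $V_2\oplus V_1^-$ is in play. The only real book-keeping hazard is the sign coming from the $\overline{\EE}_1$ factor, together with the switch from comorphism direction ($\Phi^*\EE_2^{(0)}\to \EE_1^{(0)}$) to morphism direction ($\core(\EE_1)\to \core(\EE_2)$); these two reversals combine so that the dual $\alpha^*$ appears on the nose rather than with an unwanted sign or transposition.
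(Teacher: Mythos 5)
Your argument is correct, and it rests on the same two inputs as the paper's own proof: the Lagrangian property of $R\subset \EE_2\times\ol{\EE}_1$ and the nondegenerate pairing between units and cores coming from fact (i) about $\CA$-groupoids. The paper's version is shorter: it applies the isotropy identity $\l x_1,y_1\r=\l x_2,y_2\r$ to a unit $x_i$ paired with a core element $y_i\in\ker(\tz_{\EE_i})|_{M_i}$, and reads off $y_2=\alpha^*(y_1)$ from $x_1=\alpha(x_2)$; as written, this only shows that $R$-related core elements satisfy the stated formula, leaving implicit that the induced core relation is everywhere defined and single-valued. Your route through $\core(R)|_m\cong(R_m+(U_2\oplus U_1))/(U_2\oplus U_1)$ together with the identity $L+L'=(L\cap L')^\perp$ for Lagrangian subspaces establishes the full equality $\core(R)|_m=\on{gr}(\alpha_m^*)$, totality included, so it is a more complete rendering of the same computation. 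The sign bookkeeping at the end is also handled correctly: the minus sign from $\ol{\EE}_1$ and the reversal from comorphism direction to morphism direction do cancel as you say.
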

\begin{proof}
Since $R$ is a Lagrangian relation, we have  that 
\[ x_1\sim_R x_2,\  y_1\sim_R y_2 \Rightarrow 
\l x_1,y_1\r=\l x_2,y_2\r.\] 
(Here we assume that $x_1,y_1$ have the same base point, and likewise for $x_2,y_2$.) 
Apply this to 	$x_i\in \EE_i^{(0)}$ and $y_i\in \core(\EE_i)\cong \ker(\tz_{\EE_i})|_{M_i}$. Since $x_1=\alpha(x_2)$
(thinking of the comorphism $\alpha$ as a bundle map 
$\Phi^*\EE_2^{(0)}\to \EE_1^{(0)}$), this shows $y_2=\alpha^*(y_1)$. 
\end{proof}

A \emph{multiplicative Manin pair} $(\EE,A)$ \cite{alv:poi,lib:th,lib:qua,ort:mu} is a Manin pair 
such that $\EE$ is a $\CA$-groupoid, with $A$ as a subgroupoid.
Basic examples include $(\TG,TG)$ and $(\TG,T^*G)$ for a Lie groupoid $G\rra M$, 
another example is the Cartan-Dirac structure $(G\times (\g\oplus \ol{\g}),G\times \g_\Delta)$ 
on a metrized Lie group \cite[Section 3.4]{al:pur}. If $(\EE,A)$ is a given Manin pair, then 
 the pair groupoid $(\Pair(\EE),\Pair(A))$ is a multiplicative Manin pair; here  $\Pair(\EE)=\EE\times \ol{\EE}$ carries the product Courant algebroid structure, after first reversing the metric on the second factor.

A \emph{morphism of multiplicative Manin pairs} 
$R\colon (\EE_1,A_1)\da (\EE_2,A_2)$
is a morphism of Manin pairs for which the Courant morphism $R$ is a subgroupoid of $\EE_2\times \ol{\EE}_1$. 
In particular, $R$ is a $\CA$-groupoid morphism $R\colon \EE_1\da \EE_2$.

\subsection{Equivariant Manin pairs}
If $A\Ra M$ is a Lie algebroid with an integration to a Lie groupoid $G\rra M$, there is a canonical action of the jet groupoid  
$J^1(G)\rra M$ on $A$, coming from the interpretation of $J^1(G)$ as the fat groupoid $\wh{G}$ of $TG\rra TM$ and its 
natural action on $\core(TG)\cong A$. (See \eqref{eq:manyactions}.) Given a Manin pair $(\EE,A)$, we are interested in an extension of this $J^1(G)$-action to the Courant algebroid $\EE$. 
This extension should satisfy certain natural properties. 

First, the $J^1(G)$-representation on $\EE$ should differentiate to the representation of the jet algebroid $\wh{A}=J^1(A)$ on $\EE$, given on the level of sections by by the Courant bracket: 
\[ \nabla_{j^1(\xi)}\zeta=\Cour{\xi,\zeta},\ \ \ \xi\in \Gamma(A),\ \zeta\in\Gamma(\EE).\]
%Put differently, this action is a Lie algebroid morphism from $J^1(A)\Ra M$ to the Atiyah algebroid $\on{At}(\EE)\Ra M$, where  $\EE$ is regarded simply as a vector bundle.
As explained in Appendix \ref{app:B}, the action of $\k\subset J^1(A)$ integrates \emph{canonically} to an action of 
$K\rra M$ -- in fact, the groupoid $K$ and its action on $\EE$ do not depend on integrability of $A$. Finally, the $J^1(G)$-action on $\EE$ should preserve the Courant algebroid structure, in the sense that the anchor and metric are preserved, and the bracket is preserved under the action of local bisections of $G$ (i.e., the holonomic local bisections of $J^1(G)$).

\begin{definition}\label{def:equivariantmaninpair}
A \emph{$\wh{G}$-equivariant Manin pair} is a Manin pair $(\EE,A)$, together with a Lie groupoid $G\rra M$ integrating $A$ and an extension of the $\wh{G}=J^1(G)$-action on $A$ to  the Courant algebroid $\EE$, such that:
 	\begin{itemize}
 	\item The $\wh{G}$-action differentiates to the given $\wh{A}$-action.  	
 	\item The $\wh{G}$-action  restricts to the given $K$-action.  
 	\item The metric on $\EE$ is $\wh{G}$-invariant, the anchor $\a_\EE$ is $\wh{G}$-equivariant, and the bracket on $\Gamma(\EE)$ is preserved under the action of  (local) bisections of $G$.
 \end{itemize}	 
%Equivariant morphisms of $\wh{G}_i$-equivariant Manin pairs $(\EE_i,A_i)$ are morphisms of Manin pairs together with a Lie groupoid comorphism $G_1\da G_2$ integrating the Lie algebroid comorphism $A_1\da A_2$, and such that \[ x_1\sim_R x_2,\ \wh{g}_1\sim \wh{g}_2 \Rightarrow \wh{g}_1\cdot x_1\sim_R \wh{g}_2\cdot x_2.\] 
\end{definition}
If $G$ is source-simply connected, these conditions are automatic:
\begin{proposition}If $(\EE,A)$ is a Manin pair, and $G\rra M$  a source-simply connected integration of $A$, then 
there is a unique  $\wh{G}$-action on $\EE$ making it into a $\wh{G}$-equivariant Manin pair.
\end{proposition}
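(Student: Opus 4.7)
The plan is to first integrate the $\wh{A}$-representation on $\EE$ to an action of the source-simply connected integration $\wt{\wh{G}}$ of $\wh{A}=J^1(A)$, and then to descend this action along the canonical covering $q\colon \wt{\wh{G}}\to \wh{G}$ using the intrinsic $K$-action from Appendix \ref{app:B}. The main obstacle is the descent step, because the source fibers of $\wh{G}$ are typically not simply connected even when those of $G$ are: already for $G=M\times M$ with $M$ simply connected, $\wh{G}\cong J^1(G)$ is the gauge groupoid of $TM$ with $\on{GL}$-type source fibers, which have nontrivial fundamental group. Thus one cannot apply Lie II directly to $\wh{G}$.

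The assignment $j^1(\xi)\mapsto \Cour{\xi,\cdot}$ defines a Lie algebroid representation of $\wh{A}$ on $\EE$, since for a section $\xi$ of the Lagrangian subbundle $A$ and $\zeta\in \Gamma(\EE)$, the value $\Cour{\xi,\zeta}$ at a point depends only on the 1-jet of $\xi$ there. Phrased as a Lie algebroid morphism into the Atiyah-type algebroid of infinitesimal automorphisms of $(\EE,\langle\cdot,\cdot\rangle,\a_\EE)$, the standard integration theorem for Lie algebroid morphisms into source-simply connected Lie groupoids produces a unique $\wt{\wh{G}}$-action on $\EE$ by vector bundle automorphisms preserving the metric and the anchor, and compatible with $\Cour{\cdot,\cdot}$ under local bisections.

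For the descent, since $G$ is source-simply connected the kernel of the composite $\wt{\wh{G}}\to \wh{G}\to G$ is the preimage $\wt{K}=q^{-1}(K)$. Moreover, because $\wh{G}\to G$ is a $K$-fibration with simply connected base fibers, the covering kernel $N=\ker q$ is generated by lifts of loops in $K$ and therefore lies in the source-connected integration of $\k=\ker(\wh{A}\to A)$ sitting inside $\wt{K}$, mapping into $\ker(\wt{K}\to K)$. By Appendix \ref{app:B}, the $\k$-representation on $\EE$ integrates canonically and intrinsically to an action of $K$, independently of any integrability of $A$. Uniqueness of integration of Lie algebroid morphisms on this source-connected integration of $\k$ forces the restriction of the $\wt{\wh{G}}$-action to coincide there with the pullback of the canonical $K$-action, so $N$ acts trivially on $\EE$ and we obtain a well-defined $\wh{G}$-action with the required properties. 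Uniqueness of this $\wh{G}$-action follows along the same lines: two candidates share the same differential and the same restriction to $K$, so they agree on the identity source-component of $\wh{G}$ by Lie II applied via $\wt{\wh{G}}$, and on the remaining components by left-multiplication with elements of $K$. Invariance of the metric, equivariance of the anchor, and compatibility with the Courant bracket under bisections are verified infinitesimally from the derivation properties of $\Cour{\cdot,\cdot}$ and then extended by source-connectedness together with the explicit description of the $K$-action.
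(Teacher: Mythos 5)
Your proposal is correct and follows essentially the same route as the paper's proof (Proposition \ref{prop:rr}): integrate the $J^1(A)$-representation to the source-simply connected integration $\wt{J}^1(G)$ of the jet algebroid, use that source-simple connectedness of $G$ identifies the covering kernel with that of $\wt{K}\to K_0$ so that the canonical $K$-action forces it to act trivially, and then bring in the intrinsic $K$-action to handle the failure of $\wh{G}$ to be source-connected. The one place where the paper is more explicit than you are is in the existence step: killing the kernel $N$ a priori yields only an action of the source-identity component $J^1(G)_0$, and extending it to all of $\wh{G}=J^1(G)_0\cdot K$ requires checking the compatibility $\wh{g}\cdot k\cdot \wh{g}^{-1}\cdot \zeta=(\Ad_{\wh{g}}k)\cdot \zeta$ between the $J^1(G)_0$-action and the $K$-action, which the paper verifies from the explicit formula for the $K$-action and which you should state rather than fold silently into ``we obtain a well-defined $\wh{G}$-action.''
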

This result is not obvious, since the jet prolongation of a source-simply connected Lie groupoid is not even source connected, in general. (We thank R. Fernandes for alerting us to this fact, which may be illustrated by 
$G=\Pair(M)$ for a simply connected manifold $M$.)  We will give a proof in Appendix \ref{app:B} (see Proposition \ref{prop:rr}).
If $G$ is only source connected, then its lift to a $\wh{G}$-action as in  Definition \ref{def:equivariantmaninpair} may 
not exist, but is unique if it does. 
 
%\begin{proof}	By assumption, 	the action of local bisections of $G$ on $\EE$ is by Courant algebroid automorphisms. 		In particular, the anchor map intertwines the action of local bisections of $G\rra M$ on $\EE$ 	with that on $TM$ (where the latter comes from the action of bisections on $M$). Since every element of 	$\wh{G}=J^1(G)$ may be realized by a 1-jets of a local bisections generate, it follows  that 	$\a_\EE$ is $\wh{G}$-equivariant.  \end{proof}

\begin{example}
If $M=\pt$, then $G$ is a Lie group, and coincides with its jet groupoid. A $\wh{G}$-equivariant Manin pair 
$(\EE,A)$ is a  Manin pair of Lie algebras $(\dd,\g)$, with $G$ acting by 
Lie algebra automorphisms of $\dd$, preserving the metric, extending the adjoint action on $\g$, and 
differentiating to the adjoint action of $\g$ on $\dd$. (See \cite{lib:dir,me:poilec}.)  

%The $G$-equivariant Manin pairs classify the quasi-Poisson Lie group structures on $G$; the choice of a complementary Lagrangian Lie subalgebra $\h$ (usually not $G$-invariant) defines a bivector field on $G$, making $G$ into a Poisson Lie group. For 1-connected $G$, this is Drinfeld's classification \cite{dri:poi} of Poisson Lie group structures; see \cite{lib:dir, me:poilec} for the general case. 
\end{example}

\begin{example}
To illustrate the significance of $\wh{G}$-equivariance, consider the Manin pair $(\T M,T^*M)$. The Lie algebroid structure of $A=T^*M$ has zero bracket; hence its source-simply connected integration is the vector bundle $T^*M$ itself,  with  groupoid structure given by fiberwise addition. More general source-connected integrations are of the form 
\[ G=T^*M/\Lambda,\]
where $\Lambda$ is a subgroupoid with discrete source fibers. We claim that the action of $\wh{A}=J^1(A)$ on $\T M$ integrates to an action of $\wh{G}=J^1(G)$ if and only if $\Lambda$ is a \emph{Lagrangian} submanifold
of $T^*M$.
%In other words, $\Lambda$ is locally generated by a finite collection of 1-forms on $M$. \medskip
%\noindent {\it Claim:} \emph{The action of $\wh{A}=J^1(A)$ on $\T M$ integrates to an action of $\wh{G}=J^1(G)$ if and only if $\Lambda$ is a Lagrangian submanifold of $T^*M$.}\medskip
Note that this is also the condition for the symplectic form on $T^*M$ 
to descend to a symplectic structure $\omega$ on the quotient $G=T^*M/\Lambda$. 
To prove the claim, consider the infinitesimal  action of $\nu\in \Gamma(A)=\Gamma(T^*M)$ on $\T M$, given by Courant bracket
\[ \Cour{\nu,X+\mu}=-\iota_X\d\nu.\]
It integrates to the action of the additive \emph{group} $\Gamma(T^*M)$ by 
\begin{equation}\label{eq:computed} \nu\cdot (X,\mu)\mapsto (X,\mu-\iota_X\d\nu).\end{equation}
%We may regard this as an action of $J^1(T^*M)\rra M$. 
This $\Gamma(T^*M)$-action descends to a $\Gamma(G)$-action (which, in turn, determines a $\wh{G}$-action) if and only if 
the local sections of $\Lambda$ act trivially. 
The formula shows that the action of a local section $\nu$ is 
 trivial if and only if $\nu$ (regarded as a 1-form) is \emph{closed}. It follows that the $\Lambda$ is locally generated by closed 1-forms;  equivalently, it is a Lagrangian submanifold. 
\end{example}

\subsection{Integration of Manin pairs}
Let $(\EE,A)$ be a $\wh{G}$-equivariant Manin pair. Then $(\Pair(\EE),\Pair(A))$ is a 
 $\wh{G}\times \wh{G}$-equivariant multiplicative Manin pair. On the other hand, 
 $\wh{G}\times \wh{G}$ also acts on the tangent and cotangent bundles of $G$, 
 and so acts on $\T G$. It induces adjoint actions of $\wh{G}$ on $\TG|_M,\ \TG^{(0)},$ and $\core(\TG)$, see \eqref{eq:manyactions}.
 
 Define %There are natural $\wh{G}$-equivariant 
 maps 
 \begin{equation}\label{eq:alphabeta}
 \alpha\colon  \Pair(\EE)^{(0)}\to(\TG)^{(0)},\ \ \ \beta\colon \core(\TG) \to\core(\Pair(\EE))
 \end{equation}
 as follows:
\begin{align*}
\alpha&=(\a_\EE,-\pr_{A^*})\colon \EE\to TM\oplus A^*,\\
\beta&=(\iota_A,-\a_\EE^*)\colon A\oplus T^*M\to \EE.
\end{align*}
%Thus $\beta=-\alpha^*$ with respect to the pairings between core and units. 
(The signs can be made to disappear by working with 
$(\ol{\EE},A)$ in place of $(\EE,A)$.) The two maps are $\wh{G}$-equivariant 
(since $\iota_A,\ \a_\EE$ and their dual maps are), 
and are related to the core-anchors as follows:
\begin{lemma}\label{lem:coreanchor}
The following diagram commutes: 
\[{\begin{tikzcd}		[column sep={8em,between origins},row sep={4.5em,between origins},]	\core(\T G)\arrow[r,"\varrho"]\arrow[d,"\beta"]& \TG^{(0)} \\	\core(\Pair(\EE))\arrow[r,"\varrho"]& \Pair(\EE)^{(0)}\arrow[u,"\alpha"]	\end{tikzcd}}
\] 
where the horizontal maps are the core-anchors. 
	%The core-anchor $\varrho_{\TG}\colon \core(\TG)\to (\TG)^{(0)}$ factors as \begin{equation}\label{eq:coralpha}\varrho_{\TG}=\alpha\circ \varrho_{\Pair(\EE)}\circ \beta,\end{equation}where $\varrho_{\Pair(\EE)}\colon \core(\Pair(\EE))\to \Pair(\EE)^{(0)}$ is the core-anchor for the pair groupoid(given as the identity map for $\EE$). 
\end{lemma}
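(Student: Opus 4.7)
The plan is to verify the commutativity by a direct computation of bundle maps over $M$, after first identifying all four objects in the diagram explicitly. The key observation is that once unpacked, the statement reduces to the single identity $\alpha\circ\beta=\varrho_{\T G}$ as maps $A\oplus T^*M\to TM\oplus A^*$.

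First, I identify the four objects. Since $\T G=TG\oplus T^*G$ splits as a direct sum of $\CA$-groupoids, its core decomposes as $\core(\T G)=\core(TG)\oplus\core(T^*G)=A\oplus T^*M$, with core-anchor $\varrho_{\T G}=\a_A\oplus \a_A^*$, landing in $\T G^{(0)}=TM\oplus A^*$. By Example \ref{ex:fat}(\ref{it:b}), the core-anchor of any pair $\VB$-groupoid $\Pair(V)\rra V$ is the identity of $V$; hence the core-anchor of $\Pair(\EE)$ is $\id_\EE$. Thus the bottom path in the diagram is simply $\alpha\circ\beta$, and the claim reduces to the pointwise identity $\alpha(\beta(a,\mu))=(\a_A(a),\a_A^*(\mu))$ for $(a,\mu)\in A\oplus T^*M$.

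Second, I compute the two components of $\alpha\circ\beta$. Starting from $\beta(a,\mu)=\iota_A(a)-\a_\EE^*(\mu)\in\EE$, the $TM$-component of $\alpha(\beta(a,\mu))$ is $\a_\EE(\iota_A(a))-\a_\EE(\a_\EE^*(\mu))$, which equals $\a_A(a)$ because $\iota_A$ is a Lie algebroid morphism and $\a_\EE\circ\a_\EE^*=0$ is one of the standard Courant algebroid axioms. For the $A^*$-component, I use that the identification $\EE/A\cong A^*$ is implemented by the metric: the projection $\pr_{A^*}$ kills $A$, while pairing $\pr_{A^*}(\a_\EE^*\mu)$ against $a\in A$ yields $\langle\a_\EE^*\mu,\iota_A(a)\rangle_\EE=\langle\mu,\a_A(a)\rangle$, so $\pr_{A^*}\circ\a_\EE^*=\a_A^*$. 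The two minus signs in $\alpha=(\a_\EE,-\pr_{A^*})$ and $\beta=(\iota_A,-\a_\EE^*)$ combine to give $+\a_A^*(\mu)$, as required.

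There is no real obstacle in this lemma beyond careful sign bookkeeping; in fact its content is precisely that the signs in the definitions of $\alpha$ and $\beta$ are arranged so that these maps intertwine the core-anchors of $\T G$ and $\Pair(\EE)$. The only ingredients used are the Lagrangian property $A^\perp=A$ (which produces the identification $\EE/A\cong A^*$), the compatibility of $\iota_A$ with anchors, and the Courant identity $\a_\EE\circ\a_\EE^*=0$.
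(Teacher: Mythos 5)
Your proof is correct and follows essentially the same route as the paper: identify the core-anchor of $\Pair(\EE)$ with $\id_\EE$, reduce the claim to $\alpha\circ\beta=\varrho_{\T G}=\a_A\oplus\a_A^*$, and verify this componentwise using $\a_\EE\circ\iota_A=\a_A$ and the dual identity $\pr_{A^*}\circ\a_\EE^*=\a_A^*$. You merely spell out a few details the paper leaves implicit (the vanishing of the cross terms via $\a_\EE\circ\a_\EE^*=0$ and $\pr_{A^*}\circ\iota_A=0$, and the sign cancellation).
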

\begin{proof}
	Commutativity of the diagram follows 
	from $\a_\EE\circ \iota_A=\a_A$ (the core-anchor for $TG$) and the dual equality $\pr_{A^*}\circ \a_\EE^*=\a_A^*$ (the core-anchor for $T^*G$). Note that the core-anchor for $\Pair(\EE)$ is the identity map, after identifying both core and units with $\EE$. 
\end{proof}

\begin{remark}\label{rem:confusing}
The maps $\alpha,\beta$ satisfy $\l x,\alpha(\zeta)\r=-\l \beta(x),\zeta\r$ for all $x\in A\oplus T^*M$ and 
$\zeta\in \EE$. Thought of as maps between units and cores respectively, $\beta$ is dual to $\alpha$ 
as in Lemma \ref{lem:think}. (The sign disappears since 
the pairing between $\Pair(\EE)^{(0)}\cong \EE$ and $\core(\Pair(\EE))\cong \EE$ is given by \emph{minus} the metric.)  
\end{remark}

Let $x\in \TG$ with base point $g\in G$. For any lift $\wh{g}\in \wh{G}$, the left translate
 $ l_{\wh{g}^{-1}}(x)$ lies in $\TG|_M$, with base point 
 $\sz(g)\in M\subset G$. 
The equivalence class of this element in 
\[ \core(\T G)=\T G|_M/(\T G)^{(0)}\]
will be denoted 
$[l_{\wh{g}^{-1}}(x)]$. 

%{\tt under construction}\begin{example}	Let $G\rra M$ be a Lie groupoid, with Lie algebroid $A=\on{Lie}(G)$. The jet groupoid acts on both $TG,\ T^*G$ by left multiplication as well as by right multiplication. Hence, we obtain a 	$\wh{G}\times \wh{G}$-action on 	the Courant algebroid 	$\TG\rra TM\oplus A^*$, preserving the subgroupoid $TG\rra TM$. Hence $(\TG,TG)$ is a $\wh{G}\times \wh{G}$-equivariant multiplicative Manin pair. \end{example}

\begin{theorem}[Integration of Manin pairs] \label{th:main}
Given a $\wh{G}$-equivariant Manin pair $(\EE,A)$, 	the map $\alpha\colon \EE\to TM\oplus A^*$ 
extends uniquely to a $\wh{G}\times \wh{G}$-equivariant $\CA$-groupoid morphism 
%multiplicative Manin pairs
\[{\begin{tikzcd}		[column sep={8em,between origins},row sep={4.5em,between origins},]	\TG\arrow[r,shift right] \arrow[r,shift left]\arrow[d]& TM\oplus A^*\arrow[d] \\	G\arrow[r,shift right] \arrow[r,shift left]& M	\end{tikzcd}}
\stackrel{R}{\dasharrow}
{\begin{tikzcd}		[column sep={8em,between origins},row sep={4.5em,between origins},]	\Pair(\EE)\arrow[r,shift right] \arrow[r,shift left]\arrow[d]& \EE\arrow[d] \\	\Pair(M)\arrow[r,shift right] \arrow[r,shift left]& M	\end{tikzcd}}
\] 
with base map $(\tz,\sz)\colon G\to \Pair(M)$.
%	\begin{equation}\begin{tikzcd}	[column sep={10em,between origins},row sep={4.5em,between origins},]	(\T G,TG)\arrow[r,dashrightarrow,"R"] \arrow[d]& (\Pair(\EE),\Pair(A))\arrow[d] \\	G\arrow[r, "{(\tz,\sz)}"]& \Pair(M)	\end{tikzcd}	\end{equation}whose restriction to groupoid units is the graph of the bundle map $\alpha$. 
Explicitly, 
\begin{equation}\label{eq:explicit}
 x\sim_R(\zeta',\zeta)\ \Leftrightarrow\  \tz_{\TG}(x)=\alpha(\zeta'),\ \ \ 
\beta([l_{\wh{g}^{-1}}(x)])=\zeta-\Ad_{\wh{g}^{-1}}\zeta';\end{equation}
here $g$ is the base point of $x$, and $\wh{g}$ is any lift. The morphism $R$ defines a morphism of multiplicative Manin pairs 
\[ R\colon (\TG,TG)\da (\Pair(\EE),\Pair(A)).\]
\end{theorem}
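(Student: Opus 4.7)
The plan is to define $R$ directly by the explicit formula \eqref{eq:explicit} and then verify, in turn, that it is well-defined, Lagrangian, anchor-compatible, bracket-closed, multiplicative, satisfies the Manin-pair conditions, and is unique. First I would check well-definedness: if $\wh{g}_1,\wh{g}_2\in\wh{G}$ are two lifts of $g$, they differ by an element of $K_{\sz(g)}$, so using the description \eqref{eq:K} of $K$ together with the fact that $K$ acts trivially on $\EE^{(0)}$ modulo a correction in $\core(\EE)$ controlled by $\beta$ and $\varrho$, the two equations defining $x\sim_R(\zeta',\zeta)$ are equivalent. The Lagrangian/dimension count is tracked by observing that for fixed $x\in\TG|_g$ the first equation in \eqref{eq:explicit} constrains $\zeta'$ modulo $A$ (so $\a_\EE(\zeta')=\tz_{TG}(x)$ and $\pr_{A^*}(\zeta')$ is determined), while the second then determines $\zeta$ modulo $A$; the remaining freedom in $(\zeta',\zeta)$ matches $\dim A$ on the fiber, giving the correct rank.

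Second I would verify isotropy and the anchor condition using Remark \ref{rem:confusing}: the pairing $\langle\beta(\cdot),\cdot\rangle=-\langle\cdot,\alpha(\cdot)\rangle$ converts the two conditions in \eqref{eq:explicit} into the single orthogonality statement relating $x$ to $(\zeta',\zeta)$. Anchor compatibility with $(\tz,\sz)$ then follows because $\tz_{\TG}(x)=\alpha_{TM}(\zeta')=\a_\EE(\zeta')$ at the target side, and a symmetric identity holds at the source side after right-translating by $\wh{g}$ (equivalently, applying $\Ad_{\wh{g}^{-1}}$ to \eqref{eq:explicit}). Restriction to units ($g\in M$, $\wh{g}=1$) recovers the graph of $\alpha$, confirming that $R$ extends $\alpha$ in the sense of Lemma \ref{lem:think}, and the relation on cores is forced to be $\beta$.

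Third I would check multiplicativity: specialize the formula to composable $x_1,x_2\in\TG$ and $(\zeta'_i,\zeta_i)\in\Pair(\EE)$, use $\sz_{\TG}(x_1)=\tz_{\TG}(x_2)$ together with the composition law $\wh{g_1g_2}=\wh{g}_1\circ\wh{g}_2$ in $\wh{G}$, and observe that $l_{(\wh{g}_1\wh{g}_2)^{-1}}(x_1\circ x_2)=l_{\wh{g}_2^{-1}}\,l_{\wh{g}_1^{-1}}(x_1)+l_{\wh{g}_2^{-1}}(x_2)$ modulo the identification of $\core(\TG)|_M$, which telescopes to show $(\zeta'_1,\zeta_2)$ is $R$-related to $x_1\circ x_2$ provided $\zeta_1=\zeta'_2$. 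The $\wh{G}\times\wh{G}$-equivariance is manifest from the formula since both $\alpha$ and $\beta$ are $\wh{G}$-equivariant and left/right translations by $\wh{G}$ interact predictably with the $\Ad$-action. Uniqueness then follows because any $\wh{G}\times\wh{G}$-equivariant multiplicative morphism restricting to $\alpha$ on units is forced, by left-translation, to satisfy \eqref{eq:explicit}.

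The main obstacle is closure under the Courant bracket. I would prove this by spanning $R$, near a point, with sections of the form $(0_{(\tz,\sz)}, \sigma^L-\tau^R + \text{correction})$ obtained by extending the $\VB$-comorphism formula \eqref{eq:pairsections} from Proposition \ref{prop:vbgroupoids}, where $\sigma,\tau\in\Gamma(\EE)$ and the correction term encodes the anchor/covector pieces matching $\alpha$ and $\beta$. Using \eqref{eq:bracket}, the equivariance hypothesis (bisections act by Courant automorphisms, and the infinitesimal action is given by $\Cour{\cdot,\cdot}$), and the $\CA$-groupoid compatibilities between $\TG$, $\Pair(\EE)$, and the cores, the Courant bracket of two such sections should again lie in the span of $R$. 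This is the calculation the authors defer to Appendix \ref{app:A}, so in the body of the proof I would reduce the bracket-closure to this computation. Finally, $R(TG)\subseteq\Pair(A)$ follows because $\alpha$ maps $A$ into $A\subset TM\oplus A^*$ trivially and $\beta$ restricts sensibly to $A\oplus T^*M$, while $R^{-1}(0)\cap TG=0$ is a direct consequence of $\alpha$ being injective on $A$ and the second equation in \eqref{eq:explicit} forcing $x=0$.
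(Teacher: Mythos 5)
Your overall architecture coincides with the paper's: build $R$ from $\alpha$ on units and its dual $\beta$ on cores, propagate by $\wh{G}\times\wh{G}$-equivariance, reduce multiplicativity and the Manin-pair conditions to computations over $M$, get uniqueness from Lemma \ref{lem:think}, and defer bracket-closure to the appendix-style calculation. The one genuine structural difference is cosmetic: you define $R$ everywhere by the formula \eqref{eq:explicit} and check independence of the lift $\wh{g}$, whereas the paper defines $R|_M=\on{gr}(\alpha)\oplus\on{gr}(\beta)$ and extends by the action; both versions rest on the same $K\times K$-computation (Proposition \ref{prop:rui}), so this is an acceptable variant.

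However, two of your concrete claims are false as stated and would need repair. First, the rank count: for fixed $x$ the first equation of \eqref{eq:explicit} determines $\zeta'$ only up to $\ker(\alpha)=A\cap\ker(\a_\EE)$, not up to $A$, and it is solvable only when $\tz_{\TG}(x)\in\on{ran}(\alpha)$ (the map $\alpha$ is not surjective unless $\a_A$ is); so ``the remaining freedom matches $\dim A$'' gives the wrong rank in general. The clean count is $\on{rank}(R|_M)=\on{rank}(\core(\TG))+\on{rank}(\core(\Pair(\EE)))=\dim G+\on{rank}(\EE)$, which is half the ambient rank. Second, your argument for $R^{-1}(0)\cap TG=0$ invokes ``$\alpha$ being injective on $A$'', but $\alpha|_A=(\a_A,0)$ is injective only when the anchor of $A$ is; the correct mechanism is that $\beta$ restricted to $\core(TG)=A$ is the inclusion $\iota_A$, hence injective, so the second equation forces $l_{\wh{g}^{-1}}(v)$ to be a unit, and then $\sz_{TG}(l_{\wh{g}^{-1}}(v))=\alpha(0)=0$ kills it. Finally, your proposed spanning sections $\sigma^L-\tau^R$ with $\sigma,\tau\in\Gamma(\EE)$ do not typecheck: left/right-invariant extensions exist for sections of $\core(\TG)=A\oplus T^*M$, not of $\EE$. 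The core part of $R$ is indeed spanned by $((0,\beta(\gamma)),\gamma^L)$, but the unit part $((\zeta,\zeta),\alpha(\zeta))$ admits no canonical extension off $M$; producing suitable local extensions (via a tubular-neighborhood flow generated by the core sections) is precisely the nontrivial content of Appendix \ref{app:A}, which your sketch passes over.
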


Before starting the proof, let us address the apparent lack of symmetry in \eqref{eq:explicit}.
\begin{lemma} \label{lem:alt}
In the setting of Theorem \ref{th:main}, we have 
\[ \beta([l_{\wh{g}^{-1}}(x)])=\zeta-\Ad_{\wh{g}^{-1}}\zeta'
\ \ \Leftrightarrow \ \ \beta([r_{\wh{g}^{-1}}(x)])=\Ad_{\wh{g}}\zeta-\zeta'.\]
Furthermore, if this condition holds, 
\[ \tz_{\TG}(x)=\alpha(\zeta') \Leftrightarrow \sz_{\TG}(x)=\alpha(\zeta).\]
%\begin{equation}\label{eq:explicit2}x\sim_R(\zeta',\zeta)\ \Leftrightarrow\  \sz_{\TG}(x)=\alpha(\zeta),\ \ \beta([r_{\wh{g}^{-1}}(x)])=\Ad_{\wh{g}}\zeta-\zeta'.\end{equation}
\end{lemma}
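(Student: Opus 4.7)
The plan is to reduce both equivalences to the $\wh{G}$-equivariance of the maps $\alpha,\beta$ and the core-anchor compatibility in Lemma \ref{lem:coreanchor}, combined with the basic identities relating left/right translations and the adjoint action. I will treat the two assertions separately.

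For the first equivalence, the key observation is the identity
\[ l_{\wh{g}^{-1}}(x) = \Ad_{\wh{g}^{-1}}\bigl(r_{\wh{g}^{-1}}(x)\bigr), \]
an immediate consequence of $\Ad_{\wh{g}^{-1}} = l_{\wh{g}^{-1}} r_{\wh{g}}$ applied to $r_{\wh{g}^{-1}}(x) \in \TG|_{\tz(g)}$. Passing to the core and invoking $\wh{G}$-equivariance of $\beta$ (which follows from the equivariance of $\iota_A$ and $\a_\EE^*$) yields
\[ \beta\bigl([l_{\wh{g}^{-1}}(x)]\bigr) = \Ad_{\wh{g}^{-1}}\,\beta\bigl([r_{\wh{g}^{-1}}(x)]\bigr). \]
Substituting this into the hypothesis $\beta([l_{\wh{g}^{-1}}(x)]) = \zeta - \Ad_{\wh{g}^{-1}}\zeta'$ and applying $\Ad_{\wh{g}}$ to both sides gives $\beta([r_{\wh{g}^{-1}}(x)]) = \Ad_{\wh{g}}\zeta - \zeta'$, and the converse is symmetric.

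For the second equivalence I first plan to establish the formula
\[ \sz_{\TG}(x) = \Ad_{\wh{g}^{-1}}\tz_{\TG}(x) + \varrho\bigl([l_{\wh{g}^{-1}}(x)]\bigr). \]
This comes from decomposing $l_{\wh{g}^{-1}}(x) \in \TG|_{\sz(g)}$ as $y_0 + y_c$ along $\TG|_M = \TG^{(0)} \oplus \ker(\tz_{\TG})|_M$, noting that $\sz_{\TG}$ acts on $y_c \in \ker(\tz_{\TG})$ as the core-anchor while fixing units, using that left translation preserves the source, and that $\tz_{\TG}(l_{\wh{g}^{-1}}(x)) = \Ad_{\wh{g}^{-1}}\tz_{\TG}(x)$ by the equivariance properties of Section \ref{subsec:fat}. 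Next, the commutative square of Lemma \ref{lem:coreanchor}, together with the fact that the core-anchor of $\Pair(\EE)$ is the identity on $\EE$, yields $\varrho = \alpha \circ \beta$ (as a map $\core(\TG) \to \TG^{(0)}$). Applying $\alpha$ to the hypothesis $\beta([l_{\wh{g}^{-1}}(x)]) = \zeta - \Ad_{\wh{g}^{-1}}\zeta'$ and using $\wh{G}$-equivariance of $\alpha$ then gives $\varrho([l_{\wh{g}^{-1}}(x)]) = \alpha(\zeta) - \Ad_{\wh{g}^{-1}}\alpha(\zeta')$. Substituting into the formula above yields
\[ \sz_{\TG}(x) - \alpha(\zeta) = \Ad_{\wh{g}^{-1}}\bigl(\tz_{\TG}(x) - \alpha(\zeta')\bigr), \]
and since $\Ad_{\wh{g}^{-1}}$ is a bijection the stated equivalence follows.

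I do not anticipate any serious obstacle; the only subtlety is bookkeeping the canonical identifications for $\Pair(\EE)$ (both core and units identified with $\EE$, with core-anchor the identity) and the sign conventions in $\alpha = (\a_\EE,-\pr_{A^*})$ and $\beta = (\iota_A,-\a_\EE^*)$, which must be tracked consistently with the pairing convention of Remark \ref{rem:confusing}.
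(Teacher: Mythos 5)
Your argument is correct and follows essentially the same route as the paper: the first equivalence via $\wh{G}$-equivariance of $\beta$ (through $l_{\wh{g}^{-1}} = \Ad_{\wh{g}^{-1}}\circ r_{\wh{g}^{-1}}$), and the second via the decomposition of $l_{\wh{g}^{-1}}(x)$ into unit and core components together with $\varrho_{\TG}=\alpha\circ\beta$ from Lemma \ref{lem:coreanchor}. Your final rearrangement $\sz_{\TG}(x)-\alpha(\zeta)=\Ad_{\wh{g}^{-1}}\bigl(\tz_{\TG}(x)-\alpha(\zeta')\bigr)$ is a slightly cleaner packaging that handles both directions of the second equivalence at once, where the paper proves one direction and appeals to symmetry.
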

\begin{proof} 
The first equivalence follows from the $\wh{G}$-equivariance of $\beta$. Suppose this condition holds, and that 
$\tz_{\TG}(x)=\alpha(\zeta') $. To compute $\sz_{\TG}(x)=\sz_{\TG}(l_{\wh{g}^{-1}}(x))$, 
write $l_{\wh{g}^{-1}}(x)$ in terms of the decomposition 
$\TG|_M=(\TG)^{(0)}\oplus \ker(\tz_{\TG})|_M$, 
	\[ l_{\wh{g}^{-1}}(x)=\Ad_{\wh{g}^{-1}}(\tz_{\TG}(x))
	+(l_{\wh{g}^{-1}}(x)-\Ad_{\wh{g}^{-1}}(\tz_{\TG}(x))).\]
On $(\TG)^{(0)}$, the map $\sz_{\TG}$ acts as the identity, while on $\ker(\tz_{\TG})|_M\cong \core(\TG)$ 
it acts as  the core-anchor. 
It follows that 
\[ \sz_{\TG}(x)=\sz_{\TG}(l_{\wh{g}^{-1}}(x))=\Ad_{\wh{g}^{-1}}(\tz_{\TG}(x))+\varrho_{\TG}([l_{\wh{g}^{-1}}(x)]).\]
Using $\varrho_{\TG}=\alpha\circ \beta$ (see Lemma \ref{lem:coreanchor})
and the $\wh{G}$-equivariance of $\alpha$,
 this becomes 
\[  \sz_{\TG}(x)=\Ad_{\wh{g}^{-1}}\alpha(\zeta')
+\alpha(\zeta-\Ad_{\wh{g}^{-1}}(\zeta'))=\alpha(\zeta).
\]
The other direction is proved similarly. 
\end{proof}

\begin{proof}[Proof of Theorem \ref{th:main}]
The graph of a $\VB$-groupoid morphism is a $\VB$-groupoid in its own right, hence we may consider its units and core. 
We start out by \emph{defining} 
\[ R^{(0)}=\on{gr}(\alpha),\ \ \ \core(R)=\on{gr}(\beta).\]
This determines $R|_M=R^{(0)}\oplus \core(R)$. 

1) {\it $R|_M$ is a Lagrangian subbundle.} Observe that 
that 
\[
\on{rank}(R|_M)=
\on{rank}(\core(\TG))+\on{rank}(\core(\Pair(\EE)))
=\f{1}{2} \on{rank}(\Pair(\EE)\times \ol{\TG}).\]
To check that $R|_M$ is isotropic, recall that $(\TG)^{(0)},\ \Pair(\EE)^{(0)}$ are Lagrangian, hence 
$\on{gr}(\alpha)$ is isotropic. On the other hand, for $y=(v,\mu)\in \core(\TG)$ (with $v\in A,\ \mu\in T^*M)$)
we have 
\[ \l y,y\r=2\l\mu,\a(v)\r=\l \iota_A(v)+\a_\EE^*(\mu),\iota_A(v)+\a_\EE^*(\mu)\r=\l\beta(y),\beta(y)\r.\] 
Hence 
$\on{gr}(\beta)$ is isotropic. Finally, elements of $\on{gr}(\alpha)$ and $\on{gr}(\beta)$ are orthogonal, since 
$\beta$ is dual to $\alpha$. (See Remark \ref{rem:confusing}.)

2) {\it $R|_M$ extends to a $\wh{G}\times \wh{G}$-invariant Lagrangian subbundle $R\subset \Pair(\EE)\times \ol{\TG}$.} 
In Proposition \ref{prop:rui} of Appendix \ref{app:B}, we verify that the sum $\on{gr}(\alpha)\oplus \on{gr}(\beta)$ is invariant under the $K\times K$-action. Hence, $R=(\wh{G}\times \wh{G})\cdot R|_M$ is a well-defined subbundle. 
Since the $\wh{G}\times \wh{G}$-actions on both $\TG$ and $\Pair(\EE)$ are metric preserving, it follows that $R$ is a 
Lagrangian subbundle.

3) {\it $R$ is given by \eqref{eq:explicit}.} 
Elements $x\in \TG|_M$ decompose uniquely as $x=y+z$ where $y=\tz_{\TG}(x)\in (\TG)^{(0)}$ is a unit and 
$z=x-y=[x]\in \ker\tz_{\TG}|_M=\core(\TG)$.  Similarly, elements of $\Pair(\EE)|_M$ may be written in terms of 
the decomposition to unit and core, $(\zeta',\zeta)=(\zeta',\zeta')+(0,\zeta-\zeta')$. 
We have $x\sim_R (\zeta',\zeta)$ if and only if $y=\alpha(\zeta')$ and $\beta(z)=\zeta-\zeta'$. 
These are exactly the conditions \eqref{eq:explicit}, taking $\wh{g}$ to be trivial. To check 
 \eqref{eq:explicit} in general, it suffices to check that these conditions are invariant under the action of 
 the first factor of  
 $\wh{G}\times \wh{G}$. From the equivariance properties of the maps $\tz_{\TG}$ and $ \alpha$, it follows that, for $x\in \TG|_g$ and $\wh{h}\in \wh{G}$ with $\sz(h)=\tz(g)$,  
\[ \tz_{\TG}(l_{\wh{h}}(x))=\Ad_{\wh{h}}\tz_{\TG}(x)=
\Ad_{\wh{h}}(\alpha(\zeta'))=\alpha(\Ad_{\wh{h}}(\zeta')).\]
This shows that 
\[ x\sim_R(\zeta',\zeta)\ \Rightarrow\ 	l_{\wh{h}}(x)\sim_R(	\Ad_{\wh{h}}(\zeta'),\zeta),\]
as required. 

4) {\it $R$ is a $\VB$-subgroupoid for the horizontal structure.} 
Note 
\[ l_{\wh{g}_1^{-1}}r_{\wh{g}_2^{-1}}(x_1\circ x_2)=(l_{\wh{g}_1^{-1}} x_1)\circ (r_{\wh{g}_2^{-1}}x_2)\] 
for composable $x_1,x_2\in \TG$, and similarly for the pair groupoid. 
Hence, using the $\wh{G}\times \wh{G}$-equivariance, it suffices to show that $R|_M$ is a subgroupoid. 
Suppose $x_i\sim_R (\zeta_i',\zeta_i)$ for $i=1,2$, where $x_1,x_2\in \TG|_M$ are composable 
(i.e., have  the same base point), and $\zeta_1=\zeta_2'$. We want to show $x_1\circ x_2\sim_R (\zeta_1',\zeta_2)$. 
We have 
\[ \tz_{\TG}(x_1\circ x_2)=\tz_{\TG}(x_1)=\alpha(\zeta_1'),\]
which verifies the first condition in \eqref{eq:explicit}. On the other hand, $[x_1\circ x_2]=[x_1]+[x_2]$, and therefore 
\[ \beta([x_1\circ x_2])=\beta([x_1])+\beta([x_2])=(\zeta_1'-\zeta_1)+(\zeta_2'-\zeta_2)=\zeta_1'-\zeta_2,\]
verifying the second condition.

5) \emph{$R$ is an `almost' morphism of Manin pairs.} We next show 
\[ R(TG)\subset A\times A,\ \ TG\cap R^{-1}(0)=0.\] 
Suppose $v\in TG|_g\subset \TG|_g$ satisfies $v\sim_R (\zeta',\zeta)\in  \Pair(\EE)$. 
The condition $\tz_{TG}(v)=\alpha(\zeta')$ shows that $\pr_{A^*}(\zeta')=0$, thus 
$\zeta'\in A$. Similarly, $\sz_{TG}(v)=\alpha(\zeta)$ shows $\zeta\in A$. Suppose $v\sim_R (0,0)$. The second equation in \eqref{eq:explicit} shows that$\beta[(l_{\wh{g}^{-1}}(v)]=0$. 
Since the restriction of $\beta$ to $\core(TG)=A$ is just the inclusion $\iota_A$, this shows 
that $l_{\wh{g}^{-1}}(v)$is a unit:
\[ l_{\wh{g}^{-1}}(v)\in TM\subset TG|_M.\] 
But $\sz_{TG}(l_{\wh{g}^{-1}}(v))=\sz_{TG}(v)=\alpha(\zeta)=0$. We conclude $l_{\wh{g}^{-1}}(v)=0$, and hence $v=0$.

6) {\it Uniqueness.}
By Lemma \ref{lem:think} (which does not actually involve the bracket, only the metric), a Lagrangian $\VB$-subgroupoid of $\EE\oplus \ol{\EE}\oplus \ol{\TG}|_M\rra \Pair(\EE)^{(0)}\oplus \TG^{(0)}$, 
 use units are given by the graph of $\alpha$, is necessarily the sum of the graph of  
$\alpha$ and the graph of its 
dual map $\beta$ (see Remark \ref{rem:confusing}). By equivariance, this determines $R$ globally. 

7) {\it The image of $R$ under the anchor is tangent to the graph of $(\tz,\sz)$.}
	Suppose $x\sim_R(\zeta',\zeta)$. Write $x=v+\mu$ with $v\in TG,\ \mu\in T^*G$. Then 
	$\a_{\TG}(x)=v$. 
	From the equations $\tz_{\TG}(x)=\alpha(\zeta'),\ \sz_{\TG}(x)=\alpha(\zeta)$, we see in particular that $\tz_{TG}(v)=\a_\EE(\zeta'),\ \sz_{TG}(v)=\a_\EE(\zeta)$. 
This shows that the image of $R$ under the anchor is tangent to $\on{gr}(\tz,\sz)$. 

8) {\it $R$ is a  Dirac structure with support on $\on{gr}(\tz,\sz)$.} This involves some Courant bracket calculations, 
 which are a little technical and are therefore relegated to Appendix \ref{app:A}. 
\end{proof}

\subsection{Infinitesimally multiplicative version}
Following \cite[Section 6.2.1]{lib:th} we may formulate an `infinitesimally multiplicative version' of Theorem \ref{th:main}
for Manin pairs $(\EE,A)$, not requiring any integration of the Lie algebroid $A$. 

The pair $(\T A,TA)$ is an infinitesimally multiplicative Manin pair in the sense of \cite{lib:th}; in particular, $\T A$ is a Courant algebroid vertically 
(as a bundle over $A$) and a Lie algebroid horizontally (as a bundle over $(\T A)^{(0)}=T M\oplus A^*$).  Similarly, 
$(\on{flip}(T\EE),\on{flip}(TA))$ is an  infinitesimally multiplicative Manin pair; here $\on{flip}$ indicates that we interchanged the role of horizontal and vertical structures.  Thus, $\on{flip}(T\EE)$ is vertically a Courant algebroid over $\EE$, and horizontally a Lie algebroid over $TM\oplus A^*$. 

For any double vector bundle $D$, the fat bundles corresponding to the two side bundles act on $D$ via 
horizontal translations. (See, e.g., \cite{me:quot}.) We hence obtain translation actions of the vector bundle 
$\wh{A}=J^1(A)$ on $TA$ and on $T^*A$, hence also on $\T A$. In the case of $\on{flip}(T\EE)$, the horizontal fat bundle is the 
Atiyah algebroid $\on{At}(\EE)$. The Lie algebroid representation  of $\wh{A}=J^1(A)$ on $\EE$ is a 
morphism of Lie algebroids $J^1(A)\to \on{At}(\EE)$, resulting in a translation action of the vector bundle $\wh{A}$ 
on $\on{flip}(T\EE)$. 

The assertion is, then, that the map $\alpha\colon \EE\to TM\oplus A^*$ extends uniquely to a $\wh{A}$-equivariant 
$\CA$--morphism 
\[{\begin{tikzcd}		[column sep={8em,between origins},row sep={4.5em,between origins},]	\T A\arrow[r,Rightarrow]\arrow[d]& TM\oplus A^*\arrow[d] \\	A\arrow[r,Rightarrow]& M	\end{tikzcd}}
\stackrel{R_0}{\dasharrow}
{\begin{tikzcd}		[column sep={8em,between origins},row sep={4.5em,between origins},]	 \on{flip}(T\EE)\arrow[r,Rightarrow]\arrow[d]& \EE\arrow[d] \\	TM\arrow[r,Rightarrow]& M	\end{tikzcd}}
\] 
with base map $\a\colon A\to TM$. This morphism is infinitesimally multiplicative (i.e, $R_0\subset \on{flip}(T\EE)\times \T A$ is a sub-Lie algebroid horizontally), and defines a morphism of Manin pairs 
$R_0\colon (\T A,TA)\da (\on{flip}(T\EE),\on{flip}(TA))$. 

We omit the proof, which is similar to that of Theorem \ref{th:main}.

\section{Special cases and applications}\label{sec:special}
\subsection{The case $M=\pt$}\label{subsec:point}
If the manifold $M$ of units is a point $M=\pt$, then the groupoid $G\rra M$ is just a group, and a 
$\wh{G}$-equivariant Manin pair $(\EE,A)$ is a $G$-equivariant Manin pair 
\[ (\dd,\g)\] consisting of a metrized Lie algebra and Lagrangian Lie subalgebra. 
We may describe the 
morphism $ R\colon (\TG,TG)\da (\Pair(\dd),\Pair(\g))$ using left-trivialization  
\[ \TG\cong G\times (\g\oplus \g^*)\rra \g^*.\]
We have $\core(\TG)=\g$, with $\alpha=-\pr_{\g^*}\colon \dd\to \g^*$ being minus the projection and 
$\beta\colon \g\to\dd$ the inclusion. 
 Furthermore, 
$%\tz_{\TG}(g,\xi,\tau)=g\cdot \tau,\ \  
\sz_{\TG}(g,\xi,\tau)=\tau,\ \ 
%[l_{g^{-1}}(g,\xi,\tau)]=\xi,\ 
[r_{g^{-1}}(g,\xi,\tau)]=\Ad_g\xi$. We hence arrive at
\[(g,\xi,\tau)\sim_R (\zeta',\zeta)\ \ \Leftrightarrow\ \  \tau=-\pr_{\g^*}\zeta,\ \ \xi=\zeta-\Ad_{g^{-1}} \zeta'.\]
On the level of sections,  $R\subset \Pair(\dd)\times \ol{\TG}$ 
It is spanned by sections of the form 
\[ (0,\sigma,\sigma^L),\ \ (\Ad_g\zeta,\zeta,-\l \theta^L,\zeta\r)\]
with $\sigma\in \g$ and $\zeta\in \dd$. 

%Let us verify directly that this is a Lagrangian relation: \[ \l\zeta',\zeta'\r-\l\zeta,\zeta\r=\l\zeta-\xi,\zeta-\xi\r-\l\zeta,\zeta\r=-2\l\zeta,\xi\r=-2\l \pr_{\g^*}\zeta,\xi\r.\]

\begin{remark}
The morphism of Manin pairs $R_0\colon (\T\g,T\g)\da (\on{flip}(T\dd),\on{flip}(T\g))$ has the following description. 
Relative to the vertical structure, $\on{flip}(T\dd)$ is a Courant algebroid over $\pt$, and so is just a metrized Lie algebra.  One finds that as a Lie algebra, it is a semi-direct product
$\on{flip}(T\dd)=\dd\rtimes\dd$; we write its elements as 
pairs $(\zeta_v,\zeta_t)$ (where the subscripts are meant to suggest \emph{vertical} and \emph{tangent}). 
Then $R_0\subset \on{flip}(T\dd)\times \T\g$ is spanned by sections of the form
\[ ((\xi_v,0),\xi),\ \ (((\ad_a\zeta)_v,\zeta_t),\d\l\zeta,a\r).\]
for $\xi\in\g,\ \zeta\in\dd$, where $a$ is the parameter on the base manifold of $\T\g\to \g$. 
\end{remark}

\subsection{Lie group actions}
Let $(\dd,\g)$ be a Manin pair, and $M\times \dd\to TM$ a Lie algebra action with coisotropic stabilizers. As shown in \cite{lib:cou}, this data defines an \emph{action Courant algebroid} 
$\EE=M\times \dd$ 
over $M$, with anchor given by the action $M\times\dd\to TM$, and with Courant bracket  extending the Lie bracket on constant sections. The subbundle $A=M\times \g$ is a Dirac structure, and so defines a  Manin pair 
\[ (\EE,A)=(M\times\dd,M\times \g)\] 
over $M$. Suppose $G$ is a Lie group integrating $\g$, 
with an action on $M$ as well as a linear action on $\dd$, in such a way that 
\begin{itemize}
	\item[(i)] $(\dd,\g)$ is a $G$-equivariant Manin pair of Lie algebras,
	\item[(ii)] the $G$-action on $M$ integrates the $\g$-action, 
	\item[(iii)] the action map $M\times \dd\to TM$ is $G$-equivariant. 
\end{itemize}
The action groupoid 
\[ \G=G\times M\rra M\]
integrates $A$. (We are using the letter $\G$ since 
$G$ is already taken.) 

Observe that a $\G$-action on a manifold $Q$, along a  map $\Psi\colon Q\to M$, 
is the same as a $G$-action on $Q$ for which the map $\Psi$ is $G$-equivariant. In turn, a $\G$-action is equivalent to an action of $\wh{\G}=J^1(\G)$. In one direction, this follows because the action of $\wh{\G}$ is uniquely determined by the action of local bisections of $\G$. In the other direction, it follows because the map $\wh{\G}=J^1(\G)\to\G$ has a canonical splitting. (The splitting takes $(g,m)\in \G$ to the subspace $\{0\}\times T_mM\subset T_{(g,m)}G$. Both 
source and target restrict to isomorphisms on this subspace, which therefore is an element of $\wh{\G}$.)

The linear  $G$-action on $\EE$ by  
\[ g\cdot (m,\zeta)=(g\cdot m,\Ad_g(\zeta))\] preserves $A$. By the previous paragraph, it follows that  $(\EE,A)$ is a $\wh{\G}$-equivariant Manin pair. Hence, we obtain a morphism $R$ as in \eqref{eq:R}. To describe it explicitly, 
use left trivialization to identify $\TG=G\times (\g\oplus \g^*)$; thus 
\[ \T\G=G\times (\g\oplus \g^*)\times \T M\rra \g^*\times TM.\]
Denote its elements by $(g,\xi,\tau,v+\mu)$ with $g\in G,\xi\in\g,\tau\in\g^*,v\in T_mM,\mu\in T^*_mM$, and 
let  $\Phi\colon T^*M\to \g^*$ be the moment map for the cotangent lift, i.e. $\l\Phi(\mu),\xi\r=\l\mu,\xi_M(m)\r$.
\begin{proposition}
Using left trivialization as above, the morphism of Manin pairs
\[ R\colon (\T\G,T\G)\da (\Pair(M\times \dd),\Pair(M\times \g))\] 
is given by the condition that 
\[ (g,\xi,\tau,v+\mu)\sim_R (m',m,\zeta',\zeta)\]
if and only if $m'=\tz(g),\ m=\sz(g)$, and furthermore
\[\a(\zeta)=v,\ \ \pr_{\g^*}\zeta=\tau+\Phi(\mu),\ \ 
\xi+\a^*(\mu)=\zeta-\Ad_{g^{-1}}\zeta'.
\]
\end{proposition}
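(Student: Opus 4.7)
The plan is to apply Theorem \ref{th:main} directly to the $\wh{\G}$-equivariant Manin pair $(\EE,A)=(M\times\dd,M\times\g)$, and then to unpack the resulting conditions using the specific structure of the action groupoid and the chosen left-trivialization of $\T\G$.

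First I would identify the pieces entering Theorem \ref{th:main}. The anchor $\a_\EE$ is the Lie algebra action $\dd\to\mf{X}(M)$, and under the metric identification $\EE/A=\dd/\g\cong\g^*$ the projection $\pr_{A^*}$ becomes the natural projection onto the $\g^*$-summand. Hence
\[ \alpha(\zeta)=(\a(\zeta),-\pr_{\g^*}\zeta),\qquad \beta(\xi,\mu)=\xi-\a^*(\mu)\in\dd.\]
Because $\G=G\ltimes M$ is an action groupoid, the jet prolongation $\wh{\G}=J^1(\G)$ admits the canonical splitting $(g,m)\mapsto\{0\}\oplus T_mM\subset T_{(g,m)}\G$ noted earlier in the text; under this splitting the $\wh{\G}$-action on $\EE$ reduces to the $\G$-action coming from the $G$-action on $\dd$, giving $\Ad_{\wh{(g,m)}^{-1}}\zeta'=\Ad_{g^{-1}}\zeta'$, and the corresponding left translation on $\T\G$ preserves the left-trivialized coordinates.

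The central computation is then to evaluate the two conditions of Theorem \ref{th:main}, using the equivalent source/left-translate form provided by Lemma \ref{lem:alt}, for $x=(g,\xi,\tau,v+\mu)\in\T\G|_{(g,m)}$. The tangent part of $\sz_{\T\G}(x)$ is $v$, since $\sz\colon \G\to M$ is the projection to the $M$-factor; the cotangent part $\sz_{T^*\G}(x)\in A^*|_m=\g^*$ is computed from the duality formula $\l\sz_{T^*\G}(\mu),\sigma\r=\l\mu,\sigma^L\r$ of Proposition \ref{prop:vbgroupoids}, which in the action-groupoid setting produces an expression linear in $\tau$ and $\Phi(\mu)$. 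Matching $\sz_{\T\G}(x)=\alpha(\zeta)$ then gives the first two identities $\a(\zeta)=v$ and $\pr_{\g^*}\zeta=\tau+\Phi(\mu)$. For the core condition, the canonical splitting implies that $l_{\wh{(g,m)}^{-1}}(x)$ lies in $\T\G|_{(e,m)}$ with the same left-trivialized coordinates, so its class in $\core(\T\G)=A\oplus T^*M$ is $(\xi,\mu)$; applying $\beta$ and equating to $\zeta-\Ad_{g^{-1}}\zeta'$ yields the third identity $\xi+\a^*(\mu)=\zeta-\Ad_{g^{-1}}\zeta'$.

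The main obstacle will be bookkeeping of the various sign conventions—the identification $\dd/\g\cong\g^*$ via the metric, the sign of the anchor of the action Lie algebroid induced by the paper's convention for the Lie functor, and the identification $\core(J)\cong\ker(\tz_J)|_M$ entering the definition of $\sigma^L$. Once these are reconciled with the proposition's conventions for $\a$, $\a^*$, $\pr_{\g^*}$, and $\Phi$, the verification is a direct substitution, and no independent Courant-bracket calculation is required since Theorem \ref{th:main} already provides the needed multiplicative Manin-pair morphism.
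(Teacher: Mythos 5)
Your proposal follows essentially the same route as the paper: the paper likewise just records the left-trivialized formulas for $\sz_{\T\G}$, $\tz_{\T\G}$ and the multiplication on $\T\G=T\G\oplus T^*\G$ and then substitutes into the explicit description \eqref{eq:explicit} of $R$ from Theorem \ref{th:main}, with the canonical splitting of $J^1(\G)\to\G$ handling the $\wh{\G}$-lift exactly as you describe. The only caveat is the sign bookkeeping you already flag: as written, $\alpha(\zeta)=(\a(\zeta),-\pr_{\g^*}\zeta)$ and $\beta(\xi,\mu)=\xi-\a^*(\mu)$ do not literally produce $\pr_{\g^*}\zeta=\tau+\Phi(\mu)$ and $\xi+\a^*(\mu)=\zeta-\Ad_{g^{-1}}\zeta'$, so you must pin down the sign conventions in the identifications $\core(T^*\G)\cong T^*M$ and $\dd/\g\cong\g^*$ before declaring the substitution complete.
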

\begin{proof}
The $\VB$-groupoid $\T\G$ is the direct sum of the action groupoid $T\G=TG\times TM\rra TM$ and its dual
$T^*\G\rra \g^*$. In left trivialization, one obtains the formulas 
\[ \sz_{\T\G}(g,\xi,\tau,v+\mu)=(v,\ \tau+\Phi(\mu)),\ \ 
\tz_{\T\G}(g,\xi,\tau,v+\mu)=(g_*(v-\xi_M(m)),\Ad_g\tau)\]
with the multiplication of composable elements $(g_i,\xi_i,\tau_i,v_i+\mu_i),\ i=1,2$ given by 
\[ \Big(g_1\circ g_2,\ \Ad_{g_2^{-1}}\xi_1+\xi_2,\ \tau_2-\Ad_{g_2^{-1}}\Phi(\mu_1),\ v_2+\mu_1+\mu_2\Big).\]
The result is now a simple computation, using \eqref{eq:explicit}.
%It is the direct sum of $\VB$-groupoids $T\G,\ T^*\G$. Here $T\G=(G\times \g)\times TM\rra TM$ is the action groupoid, so \[ \sz_{T\G}(g,\xi,v)=v,\ \tz_{TG}(g,\xi,v)=g_*(v-\xi_M(m)),\]with multiplication  \[ (g_1,\xi_1,v_1)\circ (g_2,\xi_2,v_2)=(g_1g_2,\xi_2+\Ad_{g_2^{-1}}\xi_1,v_2).\]The groupoid $T^*\G=G\times \g^*\times T^*M\rra \g^*$ is its dual. Letting  $\Phi\colon T^*M\to \g^*$ be the moment map for the cotangent lift, i.e. $\l\Phi(\mu),\xi\r=\l\mu,\xi_M(m)\r$, we have \[ \sz_{T^*\G}(g,\tau,\mu)=\tau+\Phi(\mu),\ \ \ \ \tz_{T^*\G}(g,\tau,\mu)=\Ad_g\tau  \]with multiplication of composable elements given by  \[ (g_1,\tau_1,\mu_1)\circ (g_2,\tau_2,\mu_2)=(g_1g_2,\tau_2-\Ad_{g_2^{-1}}\Phi(\mu_1),\mu_1+\mu_2).  \]
\end{proof}
  
\subsection{Quasi-symplectic groupoids}\label{subsec:quasisym}
We next explain how the integration result for twisted Dirac manifolds, due to Bursztyn-Crainic-Weinstein-Zhu  
\cite{bur:int}, may be obtained from Theorem \ref{th:main}.  These results correspond to Manin pairs $(\EE,A)$ 
 for which the Courant algebroid 
$\EE$ is \emph{exact} in the sense of \v{S}evera \cite{sev:let}: that is, $\on{rank}(\EE)=2\dim M$ and the anchor $\a_\EE\colon \EE\to TM$ is surjective. A morphism $R\colon \EE_1\da \EE_2$ between exact Courant algebroids, with base map 
$\Phi\colon M_1\to M_2$, is called \emph{exact} if 
%$\on{rank}(R)=\dim M_1+\dim M_2$, and 
the map 
$\a_{\EE_2}\times \a_{\EE_1}\colon R\to \on{gr}(T\Phi)\subset TM_2\times TM_1$ is surjective. 
We shall need the following fact:

\begin{proposition}\label{prop:exact}
	Let $(\EE,A)$ be a $\wh{G}$-equivariant Manin pair, such that the Courant algebroid  $\EE$ is exact. Then 
	the morphism $R\colon \TG \da \Pair(\EE)$  from Theorem \ref{th:main} is exact. 
\end{proposition}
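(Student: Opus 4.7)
The plan is to unpack the definition of exactness and exhibit, for each tangent vector $v\in T_gG$, an element $x\in\TG|_g$ together with $(\zeta',\zeta)\in\Pair(\EE)|_{(\tz(g),\sz(g))}$ such that $x\sim_R(\zeta',\zeta)$, $\a_{\TG}(x)=v$, and $\a_{\Pair(\EE)}(\zeta',\zeta)=(T\tz(v),T\sz(v))$. I will build $(\zeta',x,\zeta)$ in that order, reading off the constraints from the explicit formula \eqref{eq:explicit} and using the fact that $\EE$ is exact.

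First I will use exactness of $\EE$ to pick $\zeta'\in\EE|_{\tz(g)}$ with $\a_\EE(\zeta')=T\tz(v)$. Next I will write $x=v+\mu$ with $\mu\in T^*_gG$ to be determined, and read the first condition of \eqref{eq:explicit} as $\tz_{\TG}(x)=\alpha(\zeta')$. Since $\alpha=(\a_\EE,-\pr_{A^*})$ and $\tz_{\TG}(v+\mu)=(T\tz(v),\tz_{T^*G}(\mu))$, and the $TM$-component already matches by construction of $\zeta'$, what remains is to choose $\mu$ so that $\tz_{T^*G}(\mu)=-\pr_{A^*}(\zeta')$. This is possible because for any $\VB$-groupoid $J\rra B$ the target $\tz_J\colon J_g\to B_{\tz(g)}$ is fiberwise surjective: its kernel is identified via left translation with the core $C_{\sz(g)}$, and $\on{rank}(J_g)=\on{rank}(B)+\on{rank}(C)$. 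Applied to $T^*G\rra A^*$ with core $T^*M$, this gives surjectivity of $\tz_{T^*G}|_{T^*_gG}\to A^*_{\tz(g)}$, so a suitable $\mu$ exists.

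With $x=v+\mu$ now fixed, the second condition of \eqref{eq:explicit} prescribes $\zeta$ uniquely: choosing any lift $\wh{g}\in\wh{G}$, set
\[ \zeta \;=\; \beta\bigl([l_{\wh{g}^{-1}}(x)]\bigr)+\Ad_{\wh{g}^{-1}}\zeta'. \]
Then $x\sim_R(\zeta',\zeta)$ by construction. It remains to check the anchor condition on the $\sz$-side, namely $\a_\EE(\zeta)=T\sz(v)$. But this is automatic from Lemma \ref{lem:alt}: the first condition of \eqref{eq:explicit} implies $\sz_{\TG}(x)=\alpha(\zeta)$, and projecting to the $TM$-factor gives $T\sz(v)=\a_\EE(\zeta)$ as required. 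This exhibits the desired preimage, proving that $\a_{\Pair(\EE)}\times\a_{\TG}\colon R\to\on{gr}(T(\tz,\sz))$ is surjective.

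I do not anticipate a substantive obstacle here; the proof is essentially a pointwise lifting argument that combines the surjectivity of $\a_\EE$ (from exactness of $\EE$) with the surjectivity of $\tz_{T^*G}$ (from the $\VB$-groupoid structure), the remaining conditions being forced by \eqref{eq:explicit} and Lemma \ref{lem:alt}. The only modest point to verify carefully is the fiberwise surjectivity of $\tz_{T^*G}$, which is standard but worth recording explicitly.
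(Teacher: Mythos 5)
Your proof is correct, but it takes a genuinely different route from the paper's. The paper first invokes the $\wh{G}\times\wh{G}$-equivariance of $R$ to reduce the surjectivity claim to points of $M\subset G$, where $R|_M$ is the explicit direct sum $\on{gr}(\alpha)\oplus\on{gr}(\beta)$; the anchor image of the first summand is $\on{ran}(\a_\EE)=TM$ (this is where exactness of $\EE$ enters), that of the second is $A\cong\ker(\tz_{TG})|_M$, and the two together span $TG|_M$. You instead work at an arbitrary $g\in G$ directly from the characterization \eqref{eq:explicit}, building the lift in three steps ($\zeta'$ from exactness of $\EE$, then $\mu$, then $\zeta$ forced by the second condition). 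The extra ingredient you need, which the paper's reduction to the units lets it avoid, is the fiberwise surjectivity of $\tz_{T^*G}\colon T^*_gG\to A^*_{\tz(g)}$; your justification via the rank count coming from \eqref{eq:kert} is correct. Your use of Lemma \ref{lem:alt} to obtain the source-side anchor condition $\a_\EE(\zeta)=T\sz(v)$ without further computation is also sound. The trade-off is that the paper's argument is shorter because it reuses the structure of $R|_M$ already established in the proof of Theorem \ref{th:main}, while yours is self-contained at the level of the formula \eqref{eq:explicit}, makes the lifting completely explicit, and does not rely on the equivariance reduction.
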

\begin{proof}
%We have \[ 	\on{rank}(R)=\on{rank}(\EE)+\on{rank}(A\oplus TM)=4\dim M=\dim G+2\dim M\]as required. 
We want to show that $\a(R)=T(\gr(\tz,\sz))$. By $\wh{G}\times \wh{G}$-equivariance, it suffices to check along $M\subset G\cong \on{gr}(\tz,\sz)$. 
But $R|_M$ is the direct sum of the graphs of 
\[\alpha=(\a_\EE,\pr_{A^*})\colon \EE\to TM\oplus A^*,\ \ \beta=(\iota_A,\a_\EE^*)\colon 	A\oplus T^*M\to \EE\]
(where we identify $\core(\TG)\cong \ker(\tz_{\TG})$, and similarly for $\Pair(\EE)$).
%$\alpha=(\a_\EE,\pr_{A^*})\colon \EE\to TM\oplus A^*$ (units) and $\beta=(\iota_A,\a_\EE^*)\colon 	A\oplus T^*M\to \EE$ (core). 
The image under $\a$ of these two summands is the direct sum of 
\[ \on{ran}(\a_\EE)=TM,\ \ A\cong \ker(\tz_{TG})|_M\] 
which indeed is  all of $TG|_M$. 
\end{proof}

According to \v{S}evera \cite{sev:let}, the choice of an isotropic splitting $j\colon TM\to \EE$ 
of an exact Courant algebroid identifies 
$\EE$ with the $\eta$-twisted standard Courant algebroid for a closed 3-form $\eta\in \Omega^3(M)$. That is, it determines an isomorphism $\EE\cong TM\oplus T^*M$, 
with the modified bracket obtained 
by adding a term $\iota_{X_1}\iota_{X_2}\eta$ to the right hand side of \eqref{eq:CA}. We shall denote this Courant algebroid by $\T_\eta M$. By an $\eta$-twisted Dirac structure on $M$, we mean a Dirac structure  $A\subset \T_\eta M$. 

Recall that a differential form $\alpha\in \Omega(G)$ on a Lie groupoid is \emph{multiplicative} if 
\[ \Mult_G^*\alpha=\pr_1^*\alpha+\pr_2^*\alpha,\] 
where $\Mult_G\colon G^{(2)}\to G$ is the groupoid multiplication and $\pr_1,\pr_2\colon G^{(2)}\to G$ are the two projections.
%For 2-forms $\omega$, this is equivalent to $\omega^\flat\colon TG\to T^*G$ be a groupoid morphism, or to $\on{gr}(\omega^\flat)\subset \TG$ being a subgroupoid.
For exact $\CA$-groupoids $\EE\rra \EE^{(0)}$ over $G\rra G^{(0)}$, with \emph{multiplicative} isotropic splittings $TG\to \EE$, the 3-form $\eta\in \Omega^3(G)$ is multiplicative.  Exact $\CA$-morphisms 
\[ R\colon \T_{\eta}M\da \T_{\eta'}M',\] with base map  $\Phi\colon M\to M'$, are described by 2-forms $\omega\in \Omega^2(M)$, with $\d\omega=\eta-\Phi^*\eta'$. The relationship is given by
\[ v+\mu\sim_R v'+\mu' \Leftrightarrow v'=\Phi_*(v),\ \mu=\Phi^*\mu'+\iota_{v}\omega.\]
Exact $\CA$-groupoid morphisms are described by multiplicative 2-forms:

\begin{lemma}\label{lem:multiplicative}
Let $G,G'$ be Lie groupoids, with multiplicative closed 3-forms $\eta,\eta'$, and let 
$R\colon \T_\eta G\da \T_{\eta'}G'$ be an exact 
Courant morphism  given by a 2-form $\omega\in \Omega^2(G)$. 
Then $R$ is multiplicative if and only if $\omega$ is multiplicative. 
\end{lemma}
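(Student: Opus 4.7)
The plan is to reduce the two-sided equivalence to a single pointwise identity that characterizes multiplicativity of $\omega$. Observe first that in either direction the base map $\Phi\colon G\to G'$ must be a groupoid morphism: it is forced if $R$ is multiplicative (as the base of a subgroupoid), and it is needed for the statement to make sense in the reverse direction. The subgroupoid condition on $R$ decomposes into closure under multiplication, inclusion of units over units of $\gr(\Phi)$, and closure under inversion; I will focus on the multiplication condition, which is the only nontrivial part.

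Take composable arrows $g_1,g_2\in G$, composable elements $x_i=v_i+\mu_i\in\T_\eta G|_{g_i}$, and composable $R$-related images $x_i'=v_i'+\mu_i'\in\T_{\eta'}G'|_{\Phi(g_i)}$; by the explicit formula for $R$, one has $v_i'=\Phi_*v_i$ and $\mu_i=\Phi^*\mu_i'+\iota_{v_i}\omega$. Since multiplication in $\T_\eta G=TG\oplus T^*G$ splits into the tangent and cotangent multiplications, the relation $x_1\circ x_2\sim_R x_1'\circ x_2'$ reduces to the two identities $\Phi_*(v_1\circ v_2)=v_1'\circ v_2'$, which is automatic, and
\[
\mu_1\circ\mu_2 \;=\; \Phi^*(\mu_1'\circ\mu_2') + \iota_{v_1\circ v_2}\omega.
\]
A direct pairing computation in the cotangent groupoid gives $\Phi^*\mu_1'\circ\Phi^*\mu_2'=\Phi^*(\mu_1'\circ\mu_2')$, so the remaining identity is equivalent to $\iota_{v_1}\omega\circ\iota_{v_2}\omega=\iota_{v_1\circ v_2}\omega$. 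Using the defining property $\langle\mu_1\circ\mu_2,\,w_1\circ w_2\rangle=\langle\mu_1,w_1\rangle+\langle\mu_2,w_2\rangle$ of the cotangent multiplication, this is in turn equivalent to
\[
\omega(v_1\circ v_2,\,w_1\circ w_2) \;=\; \omega(v_1,w_1) + \omega(v_2,w_2)
\]
for all composable tangent pairs $(v_1,v_2)$ and $(w_1,w_2)$, which is precisely the pointwise form of $\Mult_G^*\omega=\pr_1^*\omega+\pr_2^*\omega$. Running the argument in either direction gives the equivalence between closure-under-multiplication of $R$ and multiplicativity of $\omega$.

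It remains to see that the unit and inverse conditions impose no further constraints. Specializing the multiplicativity identity for $\omega$ to the case where one of the arrows is a unit yields both $1_M^*\omega=0$ and $\iota_X\omega\in A^*$ for $X\in TM$, which together imply that $R|_M$ lies inside the units of $\T_{\eta'}G'\times\overline{\T_\eta G}$ (and has the correct rank by the exactness of $R$). Closure under inversion is then automatic for any relation between groupoids that is closed under multiplication and contains its units. The main care in executing the plan is purely notational — simultaneously tracking the three multiplications (of the $g_i$, the $v_i$, and the $\mu_i$), and making sure that all elements of a given identity live over matching base points in $G^{(2)}$ — rather than any conceptual obstacle.
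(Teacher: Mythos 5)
Your argument is correct and is essentially the paper's proof in expanded form: the paper simply invokes the equivalence between multiplicativity of $\omega$ and $\omega^\flat\colon TG\to T^*G$ being a groupoid morphism and then runs the same element-wise verification that $(v_1+\mu_1)\circ(v_2+\mu_2)\sim_R (v_1'+\mu_1')\circ(v_2'+\mu_2')$, which is exactly your pairing computation $\langle\mu_1\circ\mu_2,w_1\circ w_2\rangle=\langle\mu_1,w_1\rangle+\langle\mu_2,w_2\rangle$ reducing everything to $\omega(v_1\circ v_2,w_1\circ w_2)=\omega(v_1,w_1)+\omega(v_2,w_2)$. One small caveat: your parenthetical that closure under inversion is ``automatic for any relation closed under multiplication and containing its units'' is false for general relations (a submonoid is a counterexample); here it is harmless because $R$ is a Lagrangian subbundle over $\gr(\Phi)$ and inversion closure follows directly from $\on{Inv}_G^*\omega=-\omega$, itself a consequence of multiplicativity.
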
	
\begin{proof}	
The multiplicativity of $\omega$ is equivalent to $\omega^\flat\colon TG\to T^*G$ defining a groupoid morphism. 
Hence, if $\omega$ is multiplicative  we see that if $v_i+\mu_i\sim _R v_i'+\mu_i'$ for $i=1,2$, and 
\[ v+\mu=(v_1+\mu_1)\circ  (v_2+\mu_2),\ \  v'+\mu'=(v_1'+\mu_1')\circ  (v_2'+\mu_2'),\] 
then   
$v+\mu\sim_R v'+\mu'$. This shows that $R$ is multiplicative. The same calculation also shows the converse.   
\end{proof}

Suppose   now that $(\EE,A)$ is an exact $\wh{G}$-equivariant Manin pair. Choose an isotropic 
splitting, identifying $\EE\cong T_\eta M$ for a closed 3-form $\eta$. By Proposition \ref{prop:exact}, 
the morphism $R\colon (\TG,TG)\da (\Pair(\EE),\Pair(A))$ from Theorem \ref{th:main} is exact, and so is given by a 
 2-form $\omega_G\in \Omega^2(G)$ satisfying 
 \begin{equation}\label{eq:closed}
 \d\omega_G=\sz^*\eta-\tz^*\eta.\end{equation}
Since $R$ is multiplicative, Lemma \ref{lem:multiplicative} shows that 
$\omega_G$ is multiplicative. 
The fact that $R$ is a morphism of Manin pairs translates to the fact that 
$A\subset \T_\eta M$ is the image of the map 
\[ (\a_A,\mathsf{b}_A)\colon A\to TM\oplus T^*M\] 
where 
\begin{equation}\label{eq:varrho} \mathsf{b}_A\colon A\to T^*M\end{equation} is the map $\mathsf{b}_A(\xi)(w)=\omega_G(v,w)$, for 
$v\in \ker(T\tz)|_M$ representing $\xi\in A$ and 
$w\in TM\subset TG|_M$. The uniqueness property in the definition of morphism of Manin pairs 
(Definition \ref{def:mor})
means that 
\begin{equation}\label{eq:nondegeneracy}
\ker(\omega_G)\cap \ker(T\tz)\cap \ker(T\sz)=0.\end{equation}
These are precisely the conditions defining the integration of the $\eta$-twisted Dirac structure $A\subset \T_\eta M$ 
to a quasi-symplectic groupoid, as in \cite{bur:cou}.  Note that if $\eta=0$ and $A\cap TM=0$, so that $A$ is the graph of a Poisson structure, then $\omega_G$ is \emph{symplectic}. We hence recover the fact that a Poisson manifold is integrable provided that its 
cotangent Lie algebroid is integrable.

\subsection{Integration of (quasi-) Lie bialgebroids}
Let $(\EE,A)$ be a Manin pair, and $B\subset \EE$ a Lagrangian subbundle complementary to $A$. 
The bundle metric on $\EE$ identifies $B\cong A^*$. By the theory of Liu-Weinstein-Xu \cite{liu:ma}, this choice of splitting defines on $A$ the structure of a 
\emph{quasi-Lie bialgeboid}. (It is a Lie bialgebroid if and only if $B$ is a Dirac structure.)  Conversely, given a 
quasi-Lie bialgebroid $A$, the direct sum $\EE=A\oplus A^*$ acquires the structure of a Courant algebroid, and $(\EE,A)$ is a 
Manin pair. We denote by 
\begin{equation}\label{eq:astar} \a_{A^*}\colon A^*\to TM\end{equation}
the restriction of $\a_\EE$ to $B\cong A^*$.

Suppose $G\rra M$ integrates $A$, and that $(\EE,A)$ is $\wh{G}$-equivariant. (There is no equivariance condition on $B$.) 
\begin{lemma}
The pre-image of 
\[ R^{-1}(\Pair(B))\]
of $\Pair(B)\subset \Pair(\EE)$ under the relation \eqref{eq:R}
 is the subgroupoid 
 \[ \on{gr}(\pi_G^\sharp)\rra \on{gr}(\a_{A^*})\]
 defined by a multiplicative bivector field $\pi_G$. If $B$ is a Dirac structure, then $\pi_G$ is Poisson. 
\end{lemma}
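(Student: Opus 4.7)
The plan is to trace the preimage through each structural layer carried by the morphism $R$: the Lagrangian/Dirac property, the reformulation as a graph, the multiplicative property, and the identification of units. First I would note that $\Pair(B) = B\times \ol{B}$ is a Lagrangian subbundle of $\Pair(\EE)$ complementary to $\Pair(A) = A\times A$, and is a Dirac structure whenever $B$ is (since the Courant bracket on $\Pair(\EE)$ is componentwise). By the general fact recalled just after Definition \ref{def:mor}, its preimage $R^{-1}(\Pair(B))$ is then a Lagrangian subbundle of $\TG$ complementary to $TG$, and a Dirac structure whenever $B$ is. Since a Lagrangian complement to $TG$ inside $\TG = TG \oplus T^*G$ is precisely the graph of a (necessarily skew) bundle map $\pi_G^\sharp\colon T^*G\to TG$, this identifies $R^{-1}(\Pair(B)) = \on{gr}(\pi_G^\sharp)$ for a uniquely determined bivector field $\pi_G$ on $G$. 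Integrability of this graph as a Dirac structure is equivalent to $\pi_G$ being Poisson, which will give the final clause.

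For multiplicativity I would exploit that both $R\subset \Pair(\EE)\times \ol{\TG}$ and $\Pair(B)\subset \Pair(\EE)$ are subgroupoids, so that $R^{-1}(\Pair(B))$ is a subgroupoid of $\TG$. Concretely, given composable $x_1, x_2\in R^{-1}(\Pair(B))$ with witnesses $(\zeta'_i,\zeta_i)\in \Pair(B)$, the base-map compatibility of $R$ together with injectivity of $\alpha|_B$ (a consequence of $B$ being complementary to $A$, since then $\pr_{A^*}|_B\colon B\to A^*$ is an isomorphism) forces $\zeta_1 = \zeta'_2$, so the witnesses are composable in the pair groupoid and $x_1\circ x_2\sim_R (\zeta'_1,\zeta_2)\in \Pair(B)$. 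That the graph of $\pi_G^\sharp$ be a subgroupoid of $\TG$ is then the standard characterization of multiplicativity of $\pi_G$.

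The remaining task is the identification of units. An element $x\in (\TG)^{(0)} = TM\oplus A^*$ lies in the unit manifold of $R^{-1}(\Pair(B))$ iff $x = \alpha(\zeta)$ for some $\zeta\in B$ (the units of $\Pair(B)$ being the diagonal $\{(\zeta,\zeta):\zeta\in B\}$, identified with $B$). Writing $\alpha = (\a_\EE,-\pr_{A^*})$ and using the isomorphism $\pr_{A^*}|_B\colon B\cong A^*$, one finds that under this identification $\a_\EE|_B$ becomes the anchor $\a_{A^*}$ of the dual side of the Manin pair, so the image of $\alpha|_B$ is $\on{gr}(\a_{A^*})$ (up to the sign implicit in $\alpha$). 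The main subtlety is merely bookkeeping at units: keeping the sign in $\alpha$ and the identification $B\cong A^*$ aligned with the statement. All other steps follow formally from properties of $R$ already established in Theorem \ref{th:main}.
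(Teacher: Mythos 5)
Your proof is correct and follows essentially the same route as the paper: backward image of a Lagrangian complement is a Lagrangian complement (hence the graph of a bivector, Poisson in the Dirac case), multiplicativity from $R$ and $\Pair(B)$ being subgroupoids, and identification of the units via $\alpha|_B$ and $\pr_{A^*}|_B\colon B\cong A^*$. The only difference is that you spell out the composability of the witnesses $(\zeta_i',\zeta_i)$ using injectivity of $\alpha|_B$, a detail the paper leaves implicit.
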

\begin{proof}
Since $\Pair(B)\subset \Pair(\EE)$ is a Lagrangian complement to $\Pair(A)$, 
its backward image under  $R$ is a 
Lagrangian complement to $TG$ (see e.g. \cite[Proposition 1.15]{al:pur}), and hence is the graph 
of a  bivector field $\pi_G$. (If $B$ is a Dirac structure, then so is $\Pair(B)$ and its backward image; hence $\pi_G$ is Poisson in that case.) Since $\Pair(B)$ is a subgroupoid and $R$ is multiplicative, the backward image $\on{gr}(\pi_G^\sharp)$ 
is a subgroupoid. The space of units of this subgroupoid is the graph of the restriction of 
$\pi_G^\sharp$ to $A^*\subset T^*G|_M$. This is the same as the map \eqref{eq:astar}. \end{proof}

In addition, by \cite{liu:ma} (see also \cite[Section 3.2]{lib:cou} or Example \ref{ex:burcou} below), the Lagrangian splitting $\EE=A\oplus B$ also defines a bivector field 
$\pi_M$ on $M$. Again, using that $R$ is a morphism of Manin pairs, it follows
(using, e.g, \cite[Section 3.5]{lib:cou}) 
that the base map $(\tz,\sz)$ is a (quasi-)Poisson map:
\[ \pi_G\sim_{(\tz,\sz)}(\pi_M,-\pi_M).\]
If $B$ is integrable (a Dirac structure), then $\pi_M$ is a Poisson structure. Furthermore, 
$\Pair(B)$ and its backward image are integrable, too, and so $\pi_G$ is a Poisson structure. 
Hence, in this case $\pi_G$ is a multiplicative Poisson structure on $G$, and  $(\tz,\sz)$ is a Poisson map.

In this way, the Mackenzie-Xu integration result for Lie bialgebroids \cite{mac:int} and its generalization to quasi-Lie bialgebroids 
\cite{igl:uni} may be seen as consequences of Theorem \ref{th:main}. In fact, we obtain a slight generalization since $G$ need not be source-simply connected.

\section{Hamiltonian spaces}\label{sec:hamiltonian}
\subsection{Hamiltonian $G$-spaces}
According to \cite{bur:cou,xu:mom},  manifolds with Manin pairs (i.e., generalized Dirac manifolds)
serve as moment map targets for moment map theories. The Hamiltonian spaces, in this context, are defined as morphisms of Manin pairs:
\begin{definition} \cite{bur:cou,igl:ham}
	A \emph{Hamiltonian $A$-space} for a  Manin pair $(\EE,A)$  is a manifold $P$ together with a
	morphism of Manin pairs 
	\begin{equation}\label{eq:lmap}
	L\colon (\T P,TP)\da  (\EE,A).\end{equation}
	The base map $\Phi\colon P\to M$ is called the \emph{moment map} of the Hamiltonian space.  
\end{definition}

\begin{example}\cite[Example 3.2]{bur:cou} \label{ex:burcou}
For every Manin pair $(\EE,A)$ over $M$, the manifold $P=M$ is a Hamiltonian $A$-space, with $\Phi=\on{id}_M$ the identity map, and with 
$L\subset \EE\times \ol{\T M}$ the sum of the graph of $\a_A\colon A\to TM$ and the graph of 
$\a_\EE^*\colon T^*M\to \EE$.  If $B$ is a Lagrangian complement to $A$, then its pre-image $L^{-1}(B)$ 
under this morphism 
is a Lagrangian complement to $TM$, and hence is the graph of a bivector field $\pi_M$.  
\end{example}

Given a Hamiltonian $A$-space, the Lie algebroid comorphism $TP\da A$ defines an action of the Lie algebroid $A\Ra M$ on $P$. 
In practice, one is interested in situations where this action integrates to a Lie groupoid action, with an equivariant moment map. 
Let $G\rra M$ be a Lie groupoid, with $\on{Lie}(G)=A$. An action of $G\rra M$ along $\Phi\colon P\to M$
lifts to actions of the jet groupoid $\wh{G}\rra M$ on the tangent and cotangent bundles of $P$; these combine into a $\wh{G}$-action on $\T P$ (along the base projection 
$\T P\to P$ followed by $\Phi$). 

\begin{definition}\label{def:G}
A \emph{Hamiltonian $G$-space} for a $\wh{G}$-equivariant Manin pair
$(\EE,A)$ is a $G$-manifold $P$ together with a $\wh{G}$-equivariant morphism of Manin pairs \eqref{eq:lmap}, 
in such a way that the $G$-action on $P$ differentiates to the $A$-action determined  by \eqref{eq:lmap}. 
\end{definition}

Below, we will spell out this notion of Hamiltonian $G$-space in terms of splittings. Bivector fields arise after choice of a Lagrangian complement to $A$; we shall show that this corresponds to (quasi-)Poisson groupoid actions on (quasi-)Poisson manifolds. 
On the other hand, when $\EE$ is exact, a choice of a isotropic splitting of the anchor map $\a_\EE\colon \EE\to TM$ produces a 
2-form theory. This will then relate Definition \ref{def:G} with  Xu's notion of Hamiltonian spaces for quasi-symplectic groupoids. 

We will need a result, showing that the equivariance of the morphism $L$ in \eqref{eq:lmap}
implies a $\TG$-module property with respect to  the multiplicative morphism of Manin pairs $R$ from 
Theorem \ref{th:main}. Recall first that a groupoid action $G\circlearrowright P$ naturally lifts to $\VB$-groupoid actions 
\[ TG \circlearrowright TP,\ \ \ 
T^*G \circlearrowright T^*P,\ \ \ 
\T G \circlearrowright \T P.\]
Here $TG\rra TM$ acts on $TP$ along $\Phi_{TP}=T\Phi\colon TP\to TM$ by tangent lift. 
Dually, $T^*G\rra A^*$ acts on $T^*P$ along the map $\Phi_{T^*P}\colon T^*P\to A^*$ given by 
\[ 
\l \Phi_{T^*P}(\mu),\,\xi\r=\l\mu,\xi_P|_p\r,\ \ \ \mu\in T^*P|_p,\ \xi\in A|_{\Phi(p)}
\] 
where $\xi_P|_p\in TP|_p$ is defined by $\xi_P|_p\sim_L \xi$
(the image of $\xi$ under the infinitesimal action map). 
The dual action is described in terms of the tangent action by 
\[ \nu'=\mu\cdot \nu\ \ 
\Leftrightarrow 
\l\nu',w'\r=\l\mu,v\r+\l \nu,w\r \ \mbox{ whenever } w'=v\cdot w,
\] 
for  $\nu,\nu'\in T^*P,\ \mu\in T^*G$. (Here $w',w\in TP,\ v\in TG$ are assumed to have the same base points as $\nu',\nu$ and $\mu$, respectively, and $\sz_{TG}(v)=\Phi_{TP}(w)$.) The tangent and cotangent action combine into an action of 
$\TG$ on $\T P$, along $\Phi_{\T P}=(\Phi_{TP},\Phi_{T^*P})$. 

The action $\TG\circlearrowright \T P$ is a $\CA$-groupoid action, in the sense that the action morphism $\T\A\colon \TG\times \T P\da \T P$ is a Courant relation \cite{lib:dir,vys:hit}. In particular, it has the property 
$\l x_1\cdot  y_1,\ x_2\cdot y_2\r=
\l x_1,x_2\r+\l y_1,y_2\r$.

\begin{proposition} \label{prop:ham}
	Let $P$ be a Hamiltonian $G$-space for the $\wh{G}$-equivariant Manin pair $(\EE,A)$, with moment map $\Phi\colon P\to M$ and morphism $L\colon \T P\da \EE$ (see \eqref{eq:lmap}). 
\begin{enumerate}
	\item For $y\in \T P$ and $\zeta\in \EE$, 	\[ y\sim_L\zeta \ \Rightarrow\ \Phi_{\T P}(y)=\alpha(\zeta).\]
	\item  The morphism $L$ is a module morphism with respect to the morphism $R\colon \TG\da \Pair(\EE)$ (see Theorem \ref{th:main}). That is, for  $x\in \TG,\ y\in \T P$ with $\sz_{\TG}(x)=\Phi_{\T P}(y)$, 
	\[ x\sim_R (\zeta',\zeta),\ \ y\sim_L \zeta\ \Rightarrow\ x\cdot y\sim_L(\zeta',\zeta)\cdot\zeta=\zeta'.\]
\end{enumerate}	 
\end{proposition}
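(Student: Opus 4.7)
Writing $y = w + \nu$ with $w \in TP$ and $\nu \in T^*P$, the anchor compatibility of any Courant morphism (its image under the anchor is tangent to $\on{gr}(\Phi)$) immediately yields $T\Phi(w) = \a_\EE(\zeta)$, which is the $TM$-component of the desired equality $\Phi_{\T P}(y) = \alpha(\zeta)$. For the $A^*$-component, I would fix $\xi \in A|_{\Phi(p)}$ and invoke the Lie algebroid comorphism underlying $L$ to produce $\xi_P|_p \in TP|_p$ with $\xi_P|_p \sim_L \xi$. The Lagrangian condition on $L \subset \EE \times \ol{\T P}$ applied to the pairs $(\zeta, y)$ and $(\xi, \xi_P|_p)$ then yields $\langle \zeta, \xi\rangle_\EE = \langle \nu, \xi_P|_p\rangle$. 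Since $A$ is Lagrangian, the left side equals $\langle \pr_{A^*}(\zeta), \xi\rangle$, while the right side equals $\langle \Phi_{T^*P}(\nu), \xi\rangle$ by definition; the minus sign in $\alpha = (\a_\EE, -\pr_{A^*})$ is produced by the metric-reversing factor $\ol{\T P}$ in the definition of $L$, exactly as in Remark~\ref{rem:confusing}.

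\textbf{Part (2).} For this part, the plan is to use the $\wh{G}$-equivariance of $L$ to reduce the verification to a statement at $\TG|_M$, where the formula \eqref{eq:explicit} combines with an infinitesimal core-action computation to give the claim directly. Given the hypotheses, I would pick a lift $\wh{g} \in \wh{G}$ of the base $g$ of $x$ and set $x_0 = l_{\wh{g}^{-1}}(x) \in \TG|_M$. The compatibility between the left $\wh{G}$-action and the $\TG$-action on $\T P$ (both lifting the $G$-action on $P$) gives $\wh{g}^{-1} \cdot (x \cdot y) = x_0 \cdot y$, reducing the goal to showing $x_0 \cdot y \sim_L \Ad_{\wh{g}^{-1}}\zeta'$; applying $\wh{G}$-equivariance of $L$ once more then translates this back to $x \cdot y \sim_L \zeta'$.

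For the reduced statement, decompose $x_0 = \tz_{\TG}(x_0) + [x_0]$ into unit and core parts via $\TG|_M = \TG^{(0)} \oplus \core(\TG)$. The unit part acts trivially on $\T P$, while the core $[x_0] \in \core(\TG) = A \oplus T^*M$ acts infinitesimally: the $A$-summand by the Lie algebroid comorphism underlying $L$, and the $T^*M$-summand cotangentially through $\Phi$. The key claim is that this combined infinitesimal action, transported through $L$, corresponds to the additive translation of $\EE$ by $\beta([x_0])$; that is, $x_0 \cdot y \sim_L \zeta - \beta([x_0])$. Substituting $\beta([x_0]) = \zeta - \Ad_{\wh{g}^{-1}}\zeta'$ from \eqref{eq:explicit}, we obtain $x_0 \cdot y \sim_L \Ad_{\wh{g}^{-1}}\zeta'$, which completes the argument.

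\textbf{Main obstacle.} The nontrivial step will be identifying the infinitesimal core-action of $\TG$ on $\T P$, transported through $L$, with the additive action of $\beta(\cdot)$ on $\EE$. This is an infinitesimal compatibility statement relating how cotangent lifts of Lie algebroid actions interact with the dual anchor $\a_\EE^*$ appearing in $\beta = (\iota_A, -\a_\EE^*)$; I expect it to follow from the duality $\langle x, \alpha(\zeta)\rangle = -\langle \beta(x), \zeta\rangle$ of Remark~\ref{rem:confusing} combined with the tangent/cotangent lift formulas for $\Phi_{\T P}$ and the Lagrangian condition on $L$. The remaining ingredients, namely the equivariance identity $\wh{g}^{-1} \cdot (x \cdot y) = x_0 \cdot y$ and the unit-plus-core decomposition of the action, are formal consequences of the $\VB$-groupoid action framework of Section~\ref{sec:vbg}; handling the signs and identifications consistently (between $\core(\TG) \cong A \oplus T^*M$, $\core(\Pair(\EE)) \cong \EE$, and the metric-reversal on $\ol{\EE}$) will be the main bookkeeping task.
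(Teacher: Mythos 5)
Your part (1) is correct and is essentially the paper's own argument: the anchor condition on the Courant morphism $L$ gives the $TM$-component, and pairing $(\zeta,y)$ against $(\xi,\xi_P|_p)\in L$ via the Lagrangian property of $L$ gives the $A^*$-component.

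Part (2), however, has a genuine gap. Your whole argument funnels into the ``key claim'' that for $x_0\in\TG|_M$ composable with $y$ one has $x_0\cdot y\sim_L \zeta-\beta([x_0])$, and you explicitly leave this unproved (``I expect it to follow from\dots''). This claim is not a formal consequence of the duality in Remark~\ref{rem:confusing}: after unwinding \eqref{eq:explicit} (every $x_0\in\TG|_M$ with $\sz_{\TG}(x_0)=\alpha(\zeta)$ satisfies $x_0\sim_R(\zeta-\beta([x_0]),\zeta)$, by Lemma~\ref{lem:coreanchor}), your key claim \emph{is} the statement of part (2) at points of $M$, so the ``reduction'' is circular and nothing has actually been proved. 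A secondary issue is that the groupoid action does not split as ``unit part acts trivially plus core part translates'' applied to the same $y$: since the graph of the action is only a linear subbundle, writing $x_0=\tz_{\TG}(x_0)+[x_0]$ forces a corresponding splitting $y=y_1+y_2$ with $\Phi_{\T P}(y_1)=\tz_{\TG}(x_0)$ and $\Phi_{\T P}(y_2)=\varrho([x_0])$, and then $x_0\cdot y=y_1+[x_0]\cdot y_2$; you do not address this.

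The paper bypasses the explicit core-action computation entirely with a short orthogonality argument, which you should adopt: since $L$ is Lagrangian, to prove $(\zeta',x\cdot y)\in L$ it suffices to show $\l\zeta',\zeta''\r=\l x\cdot y,\,y''\r$ for every $(\zeta'',y'')\in L$ with matching base points. Writing $y''=\Phi_{\T P}(y'')\cdot y''$ and using the metric compatibility of the $\CA$-groupoid action, one gets $\l x\cdot y,\,\Phi_{\T P}(y'')\cdot y''\r=\l x,\Phi_{\T P}(y'')\r+\l y,y''\r$; then part (1) gives $\Phi_{\T P}(y'')=\alpha(\zeta'')\sim_R(\zeta'',\zeta'')$, so $\l x,\Phi_{\T P}(y'')\r=\l\zeta',\zeta''\r-\l\zeta,\zeta''\r$ while $\l y,y''\r=\l\zeta,\zeta''\r$, and the terms cancel to give exactly $\l\zeta',\zeta''\r$. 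No formula for the action of $\core(\TG)$ on $\T P$ is needed.
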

The statement of (b) may be expressed as commutativity of the following  
diagram of Courant relations 
\[{\begin{tikzcd}
[column sep={12em,between origins},row sep={4.5em,between origins},]
\TG\times \T P\arrow[r,dashed,"\T\A"] \arrow[d, dashed,"R\times L"]& \T P\arrow[d, dashed,"L"] \\
\Pair(\EE)\times \EE\arrow[r,dashed] & \EE
\end{tikzcd}}\] 
where the horizontal maps are $\CA$-groupoid actions.
\begin{proof}
	(a) Write $y=v+\mu\in \T P|_p$. 	Since the image of $L\subset \EE\times \ol{\T P}$ under the anchor is tangent to the graph of $\Phi$, we have $ \Phi_{TP}(v)=T\Phi(v)=\a_\EE(\zeta)$. 	On the other hand, for $\xi\in A_{\Phi(p)}$, since \eqref{eq:lmap} is a morphism of Manin pairs, 	we have 	\[ \l \Phi_{T^*P}(\mu),\,\xi\r=\l\mu,\xi_P|_p\r=\l \zeta,\xi\r=\l\pr_{A^*}(\zeta),\xi\r,\]	hence $\Phi_{T^*P}(\mu)=\pr_{A^*}(\zeta)$.

(b) Using the $\wh{G}$-equivariance, it suffices to consider the 
case that $x\in \TG|_M$.  Suppose $x\sim_R (\zeta',\zeta),\ \ y\sim_L \zeta$, and let 
$y'=x\cdot y$. We want to show $y'\sim_L \zeta'$, i.e., $(\zeta',y')\in L$. 
Since $L$ is Lagrangian, it suffices to show that 
$\l (\zeta',y'),(\zeta'',y'')\r=\l\zeta',\zeta''\r-\l y',y''\r$ is zero, for all 
$(\zeta'',y'')\in L$ (with the same base points). We calculate as follows:
\[  \l y',y''\r= \l x\cdot y,\ \Phi_{\T P}(y'')\cdot y''\r=\l x,\Phi_{\T P}(y'')\r+\l y,y''\r.\]
From $x\sim_R(\zeta',\zeta)$, $\Phi_{\T P}(y'')\sim_R(\zeta'',\zeta'')$, we obtain 
\[  \l x,\Phi_{\T P}(y'')\r=\l\zeta',\zeta''\r
-\l\zeta,\zeta''\r.\]
Similarly,  $y\sim_L\zeta,\ y''\sim_L \zeta''$ implies 
\[ \ \ \l y,y''\r=\l\zeta,\zeta''\r.\]
This shows  $\l y',y''\r=\l\zeta',\zeta''\r$ as desired. 
\end{proof}
\subsection{Bivector fields}

An action $\A\colon G\times_M P\to P$ of a Poisson groupoid $G\rra M$ on a Poisson manifold 
$P$, with moment map $\Phi\colon P\to M$, is called a \emph{Poisson action} if the graph of the action morphism, 
\[ \on{gr}(\A)=\{(g\cdot p,g,p)|\ \sz(g)=\Phi(p)\}\subset P\times (G\times P)^-\]
 is a coisotropic submanifold \cite{wei:coi}. This admits a reformulation in terms of the action of $\TG$  on  
$\T P$:
\begin{proposition}\label{prop:poissonaction}
An action of a Poisson groupoid $(G,\pi_G)$ on a Poisson manifold $(P,\pi_P)$ is a Poisson action if and only if 
the subgroupoid $\on{gr}(\pi_G^\sharp)\subset \TG$ preserves $\on{gr}(\pi_P^\sharp)\subset \T P$.	
\end{proposition}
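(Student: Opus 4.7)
The plan is to reformulate both the ``Poisson action'' condition and the ``preservation'' condition as the same pointwise statement about the Courant-algebroid action morphism $\T\A\colon \TG\times\T P\dasharrow\T P$, then observe that they coincide.

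First I would invoke the Dirac-geometric characterization of Poisson maps: a smooth map $f\colon(X,\pi_X)\to(Y,\pi_Y)$ between Poisson manifolds is Poisson if and only if its graph in $X\times Y^-$ is coisotropic, if and only if the tangent Courant morphism $\T f\colon \T X\dasharrow\T Y$ takes $\on{gr}(\pi_X^\sharp)$ into $\on{gr}(\pi_Y^\sharp)$. Applied to $\A\colon G\times_M P\to P$, where the source carries its fibre-product Poisson structure (coming from $\sz$ being anti-Poisson and $\Phi$ being Poisson) and the target is $P$, this rephrases ``$\A$ is a Poisson action'' as the condition that $\T\A$ sends the product Dirac structure $\on{gr}(\pi_G^\sharp)\times\on{gr}(\pi_P^\sharp)$ (restricted to composable pairs) into $\on{gr}(\pi_P^\sharp)$. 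Concretely, for $x\in\TG$ and $y\in\T P$ with $\sz_{\TG}(x)=\Phi_{\T P}(y)$, the condition reads
\[
x\in\on{gr}(\pi_G^\sharp) \text{ and } y\in\on{gr}(\pi_P^\sharp)\;\Longrightarrow\; x\cdot y\in\on{gr}(\pi_P^\sharp).
\]

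The second step is essentially tautological: by multiplicativity of $\pi_G$, the Lagrangian subbundle $\on{gr}(\pi_G^\sharp)\subset\TG$ is a subgroupoid, and the $\TG$-action on $\T P$ restricts to an action of this subgroupoid on $\T P$; to say that $\on{gr}(\pi_G^\sharp)$ \emph{preserves} $\on{gr}(\pi_P^\sharp)\subset\T P$ is precisely the displayed implication. Combining with the first step gives the equivalence.

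The main obstacle lies in step one: translating the classical coisotropicity of $\on{gr}(\A)\subset P\times G^-\times P^-$ into the pointwise Dirac compatibility for $\T\A$. Three things require care here. First, the fibre-product constraint $\sz(g)=\Phi(p)$ cutting out the domain of $\A$ tangent-lifts exactly to the composability $\sz_{\TG}(x)=\Phi_{\T P}(y)$ used in the Courant action. Second, the sign twist in $P\times G^-\times P^-$ must match the pairing identity $\langle x_1\cdot y_1,\,x_2\cdot y_2\rangle=\langle x_1,x_2\rangle+\langle y_1,y_2\rangle$ recorded before Proposition \ref{prop:ham}. Third, one uses the general fact that a Lagrangian subbundle of $\T Z$ is contained in $\on{gr}(\pi_Z^\sharp)$ if and only if it is isotropic for the associated split pairing, which is what converts the coisotropic condition into the pointwise implication above. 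Once these identifications are in hand, both directions of the proposition follow directly.
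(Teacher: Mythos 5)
Your second step is fine, and the pointwise implication you isolate,
\[
x\in\on{gr}(\pi_G^\sharp),\ y\in\on{gr}(\pi_P^\sharp),\ \sz_{\TG}(x)=\Phi_{\T P}(y)\ \Longrightarrow\ x\cdot y\in\on{gr}(\pi_P^\sharp),
\]
is indeed the right reformulation of ``$\on{gr}(\pi_G^\sharp)$ preserves $\on{gr}(\pi_P^\sharp)$''. The gap is in your first step. The definition of Poisson action in play (Weinstein's) is that $\on{gr}(\A)$ is coisotropic in $P\times (G\times P)^-$; you propose to convert this into the statement that $\A$ is a Poisson map out of $G\times_M P$ equipped with ``its fibre-product Poisson structure'', and then invoke the Dirac-geometric characterization of Poisson maps. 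But the fibre product $G\times_M P$ does not carry a natural Poisson structure: it is at best a coisotropic submanifold of $G\times P$, and coisotropic submanifolds do not inherit Poisson structures. So the characterization you want to apply has no well-defined domain, and the equivalence between ``$\on{gr}(\A)$ is coisotropic'' and your displayed implication is precisely the content that still has to be proved.

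The missing ingredient is the identification of the conormal bundle of $\on{gr}(\A)$. By the definition of the induced $T^*G$-action on $T^*P$ (namely $\l \mu\cdot\nu,\, v\cdot w\r=\l\mu,v\r+\l\nu,w\r$ for composable pairs), the conormal bundle of $\on{gr}(\A)\subset P\times G\times P$ consists exactly of the covectors $(-\mu\cdot\nu,\mu,\nu)$. Coisotropicity then says that $(\pi_P,-\pi_G,-\pi_P)^\sharp$ sends these into $T\on{gr}(\A)=\{(v\cdot w,v,w)\}$, i.e.\ $\pi_P^\sharp(\mu\cdot\nu)=\pi_G^\sharp(\mu)\cdot\pi_P^\sharp(\nu)$, which is verbatim the preservation statement. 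Once the conormal bundle is pinned down, the proof is one line; your proposal circles this point (the pairing identity, the tangent lift of the constraint $\sz(g)=\Phi(p)$) without ever carrying out that computation, and your appeal to ``a Lagrangian subbundle of $\T Z$ contained in $\on{gr}(\pi_Z^\sharp)$ iff isotropic for the split pairing'' is not a correct substitute (two Lagrangian subbundles with one contained in the other are equal). The gap is fixable, but by the direct conormal argument, not by the route you describe.
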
 
\begin{proof}
By definition of the action of $T^*G$ on $T^*P$, the elements of the conormal bundle of $\on{gr}(\A)$ 
are exactly the elements of $T^*(P\times (G\times P))$ of the form $(-\mu\cdot \nu,\mu,\nu)$. 	
The condition that $\on{gr}(\A)$ is coisotropic means that $(\pi_P,-\pi_G,-\pi_P)^\sharp$ takes this to the 
tangent bundle of $\on{gr}(\A)$. The resulting condition 
\[ \pi_P^\sharp(\mu\cdot \nu)=\pi_G^\sharp(\mu)\cdot \pi_P^\sharp(\nu)\]
says that the action of $\on{gr}(\pi_G^\sharp)\subset \TG$ preserves $\on{gr}(\pi_P^\sharp)$. 
\end{proof}

Let $P$ be a Hamiltonian $G$-space for the $\wh{G}$-equivariant Manin pair $(\EE,A)$. 
Given a Dirac structure $B$ complementary to $A\subset \EE$, its backward image under $L$ defines a Dirac structure  complementary to $TP\subset \T P$, which hence is the graph of a Poisson structure $\pi_P$. One also has a Poisson structure  $\pi_M$ defined by the decomposition $\EE=A\oplus B$, and the map $\Phi$ is a Poisson map: 
\[ \pi_P\sim_\Phi \pi_M.\]
\begin{proposition}
Let $P$ be a  Hamiltonian $G$-space for $(\EE,A)$, and choose a splitting $B\subset \EE$ as above, defining Poisson structures 
$\pi_G,\ \pi_P$. 
Then the moment map $\Phi\colon P\to M$ and the 
action morphism $\A\colon G\times P\da P$ are Poisson. 	
\end{proposition}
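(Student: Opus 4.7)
The plan is to prove the two assertions separately; neither step will require new Courant-bracket computations, and both will be formal consequences of Proposition \ref{prop:ham} together with the splitting $\EE=A\oplus B$.

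For the claim that $\Phi$ is Poisson, the plan is to run exactly the argument used at the end of Section \ref{subsec:quasisym} for $(\tz,\sz)$. Since $L\colon (\T P,TP)\da (\EE,A)$ is a morphism of Manin pairs with base map $\Phi$, every $y=\pi_P^\sharp(\mu)+\mu\in\on{gr}(\pi_P^\sharp)|_p=L^{-1}(B)|_p$ has a unique $L$-partner $\zeta\in B_{\Phi(p)}$, and Proposition \ref{prop:ham}(a) delivers $T\Phi(\pi_P^\sharp(\mu))=\a_\EE(\zeta)$ together with $\Phi_{T^*P}(\mu)=-\pr_{A^*}(\zeta)$. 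Unpacking the definition of $\pi_M$ from the splitting $\EE=A\oplus B$ (where $\pi_M^\sharp$ is induced by $\a_\EE|_B$ under the identification $A^*\cong B$ by the metric), these two identities amount precisely to $T\Phi\circ \pi_P^\sharp=\pi_M^\sharp\circ T^*\Phi$, i.e.\ $\Phi$ is Poisson.

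For the action claim, the plan is to invoke Proposition \ref{prop:poissonaction}: it suffices to show the $\TG$-action on $\T P$ sends composable pairs in $\on{gr}(\pi_G^\sharp)\times_M\on{gr}(\pi_P^\sharp)$ back into $\on{gr}(\pi_P^\sharp)$. Given such $(x,y)$ with $x\in R^{-1}(\Pair(B))|_g$ and $y\in L^{-1}(B)|_p$, pick lifts $\zeta'\in B_{\tz(g)},\ \zeta_1\in B_{\sz(g)},\ \zeta_2\in B_{\Phi(p)}$ with $x\sim_R (\zeta',\zeta_1)$ and $y\sim_L \zeta_2$. Proposition \ref{prop:ham}(a) gives $\Phi_{\T P}(y)=\alpha(\zeta_2)$, and Lemma \ref{lem:alt} together with the defining relation \eqref{eq:explicit} gives $\sz_{\TG}(x)=\alpha(\zeta_1)$, so the composability condition $\sz_{\TG}(x)=\Phi_{\T P}(y)$ reads $\alpha(\zeta_1)=\alpha(\zeta_2)$. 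Since $B$ is a Lagrangian complement to $A$, the projection $\pr_{A^*}\colon B\to A^*$ is an isomorphism, hence $\alpha|_B=(\a_\EE,-\pr_{A^*})|_B$ is injective, forcing $\zeta_1=\zeta_2$. Proposition \ref{prop:ham}(b) then delivers $x\cdot y\sim_L \zeta'\in B$, so $x\cdot y\in L^{-1}(B)=\on{gr}(\pi_P^\sharp)$, as required.

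The only mild technical point I anticipate is the injectivity of $\alpha|_B$, which is what glues the two $B$-lifts on the source side and lets Proposition \ref{prop:ham}(b) fire. Once that is in hand, Propositions \ref{prop:ham} and \ref{prop:poissonaction} finish the proof with no further work.
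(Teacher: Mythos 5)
Your proposal is correct and follows essentially the same route as the paper: lift $x$ and $y$ to $B$-elements via the backward images, use Proposition \ref{prop:ham}(a) and the injectivity of $\alpha|_B$ (equivalently, that $\pr_{A^*}$ restricts to an isomorphism $B\cong A^*$) to identify the two lifts on the source side, then apply Proposition \ref{prop:ham}(b) and Proposition \ref{prop:poissonaction}. The only cosmetic difference is that the paper disposes of the claim that $\Phi$ is Poisson in the paragraph preceding the proposition (as a general fact about backward images under morphisms of Manin pairs) and devotes the proof solely to the action, whereas you spell that step out explicitly — which is harmless and arguably clearer.
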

\begin{proof}
Let $x=\mu+\pi^\sharp(\mu)\in \on{gr}(\pi_G^\sharp)$ and $y=\nu+\pi^\sharp(\nu)\in \on{gr}(\pi_P^\sharp)$ with 
$\sz_{\TG}(x)=\Phi_{\T P}(y)$. 
By definition of the backward image, there are unique elements $(\zeta',\zeta)\in \Pair(B)$ and $\zeta''\in B$ such that 
$x\sim_R (\zeta',\zeta),\ y\sim_L \zeta''$. Since 
\[ \sz_{\TG}(x)=\alpha(\zeta),\ \ \Phi_{\T P}(y)=\alpha(\zeta'')\]
(see Proposition \ref{prop:ham}), we obtain $\alpha(\zeta)=\alpha(\zeta'')$. Since the map $\alpha\colon \EE\to TM\oplus A^*$ restricts to an injection 
on $B$ (the map $\pr_{A^*}\colon \EE\to A^*$ restricts to an isomorphism $B\cong A^*$), 
this holds if and only if $\zeta''=\zeta$. From Proposition \ref{prop:ham} (b), it then follows that $x\cdot y\sim_L \zeta'$. 
Since $\zeta'\in B$, this implies that $x\cdot y\in \on{gr}(\pi_P^\sharp)$. 
By Proposition \ref{prop:poissonaction}, this  proves that the $G\rra M$-action on $P$  is Poisson.  
\end{proof}	

\begin{remark}
If $B\subset \EE$ is any Lagrangian subbundle complementary to $A$, not necessarily integrable, then the bivector fields $\pi_G,\ \pi_P$ are only quasi-Poisson. The results above extend to the quasi-setting, with the obvious modifications. 
\end{remark}

\subsection{Hamiltonian spaces for quasi-symplectic groupoids}
We shall use the following fact:
\begin{proposition}\label{lem:anotherlemma}
	Let $G\rra M$ be a groupoid acting on a manifold $P$ along $\Phi\colon P\to M$, and let 
	$\omega_P\in \Omega^2(P),\ \omega_G\in \Omega^2(G)$ be 2-forms, where $\omega_G$ is multiplicative.
	Let $\A\colon G\,_\sz\!\times_\Phi\, P\to P$ be the action morphism, and $\pr_1,\pr_2$ the projections 		
	from $G\,_\sz\!\times_\Phi\, P$ to the two factors.
	Then  the following are equivalent:
	\begin{enumerate}
		\item $\A^*\omega_P=\pr_2^*\omega_P+\pr_1^*\omega_P$, 
		\item  the maps $\omega_P^\flat,\ \omega_G^\flat$ intertwine the $TG$-action on $TP$ with the $T^*G$-action on $T^*P$, 
		\item  the graph $\on{gr}(\omega_P^\flat)\subset \T P$ is invariant under the action of the subgroupoid $\on{gr}(\omega_G^\flat)\subset \T G$. 
	\end{enumerate}
\end{proposition}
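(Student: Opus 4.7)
My plan is to argue (b) $\Leftrightarrow$ (c) by unwinding the definitions of $\on{gr}(\omega^\flat)$ and of the $\TG$-action on $\T P$, and then (a) $\Leftrightarrow$ (b) by exploiting the duality defining the cotangent action. The whole argument is essentially a bookkeeping exercise: the three conditions are three repackagings of a single pointwise identity.

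For the first equivalence, elements of $\on{gr}(\omega_G^\flat)$ have the form $x = v + \iota_v \omega_G$ with $v \in TG$; the multiplicativity of $\omega_G$ implies that these form a subgroupoid of $\TG$, either directly (because $\omega_G^\flat$ is then a $\VB$-groupoid morphism) or by the argument of Lemma \ref{lem:multiplicative}. For a composable pair $x$, $y = w + \iota_w \omega_P \in \on{gr}(\omega_P^\flat)$, the $\TG$-action on $\T P$ decomposes as $x \cdot y = (v\cdot w) + (\iota_v \omega_G) \cdot (\iota_w \omega_P)$. This lies in $\on{gr}(\omega_P^\flat)$ precisely when the cotangent component equals $\iota_{v\cdot w}\omega_P$, which is the intertwining property asserted in (b).

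For (a) $\Leftrightarrow$ (b), I invoke the duality characterizing the $T^*G$-action on $T^*P$ recalled in the excerpt: $\mu \cdot \nu$ is the unique covector satisfying $\langle \mu \cdot \nu, w_1' \rangle = \langle \mu, v_1 \rangle + \langle \nu, w_1 \rangle$ whenever $w_1' = v_1 \cdot w_1$. Substituting $\mu = \iota_v \omega_G$ and $\nu = \iota_w \omega_P$, and comparing with $\iota_{v \cdot w}\omega_P$, condition (b) unpacks to
\[ \omega_P(v \cdot w,\, v_1 \cdot w_1) = \omega_G(v, v_1) + \omega_P(w, w_1) \]
for all composable pairs $(v_1, w_1)$ sharing base points with $(v, w)$. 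Since the tangent space to $G \,_\sz\!\times_\Phi\, P$ at a point $(g,p)$ is exactly the set of such composable pairs, contracting this identity against arbitrary tangent vectors is precisely the assertion $\A^*\omega_P = \pr_1^*\omega_G + \pr_2^*\omega_P$, i.e.\ condition (a).

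The main technical point I expect to trip over is lining up the composability conditions for the two $\VB$-groupoid actions: if $\sz_{TG}(v) = T\Phi(w)$, does it follow that $\sz_{T^*G}(\iota_v \omega_G) = \Phi_{T^*P}(\iota_w \omega_P)$, so that the product $(\iota_v\omega_G)\cdot (\iota_w\omega_P)$ is even defined? This compatibility is a consequence of $\omega_G^\flat\colon TG \to T^*G$ being a $\VB$-groupoid morphism (equivalent to multiplicativity of $\omega_G$), and can be verified directly by pairing with arbitrary $\xi \in A|_{\Phi(p)}$ and using the definitions of $\sz_{T^*G}$ and $\Phi_{T^*P}$. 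Once this matching of sources is in place, the three reformulations collapse to each other without further work.
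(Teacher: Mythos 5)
Your overall strategy is the natural one and, for what it is worth, matches the paper's intent: the paper omits the proof entirely, remarking only that the statement ``is a simple extension of the characterization of multiplicative 2-forms on groupoids, and is proved similarly,'' so your unwinding of (b) $\Leftrightarrow$ (c) from the decomposition $x\cdot y=(v\cdot w)+(\mu\cdot\nu)$ of the $\T G$-action on $\T P$, and of (a) $\Leftrightarrow$ (b) from the duality formula for the cotangent action, is exactly the expected argument. Those two computations are correct, including your tacit correction of the typo in (a) (the right-hand side must be $\pr_1^*\omega_G+\pr_2^*\omega_P$).

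However, the ``main technical point'' you flag at the end is resolved incorrectly, and this is a genuine gap. The source-matching condition $\sz_{T^*G}(\iota_v\omega_G)=\Phi_{T^*P}(\iota_w\omega_P)$ does \emph{not} follow from the multiplicativity of $\omega_G$: pairing both sides with $\xi\in A|_{\sz(g)}$ it reads $\omega_G(v,\xi^L_g)=\omega_P(w,\xi_P|_p)$, a condition coupling $\omega_G$ to $\omega_P$ which fails already for $\omega_G=0$ (which is multiplicative) and generic $\omega_P$. Multiplicativity of $\omega_G$ only tells you that $\sz_{T^*G}(\iota_v\omega_G)$ depends on $v$ through $\sz_{TG}(v)=T\Phi(w)$; it says nothing about $\Phi_{T^*P}(\iota_w\omega_P)$. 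The correct repair is to recognize this compatibility as part of the content of the three conditions rather than as an automatic fact. Concretely, it is the specialization of (a) to second arguments $(v_1,w_1)\in\ker(T\A)$ (an $\on{rank}(A)$-dimensional space of composable pairs with $v_1\cdot w_1=0_{gp}$), for which (a) reads $0=\omega_G(v,v_1)+\omega_P(w,w_1)$; equivalently, it is exactly the condition that the right-hand side of the duality formula $\langle\mu\cdot\nu,w_1'\rangle=\langle\mu,v_1\rangle+\langle\nu,w_1\rangle$ depends only on $w_1'=v_1\cdot w_1$, i.e.\ that $(\iota_v\omega_G)\cdot(\iota_w\omega_P)$ is even well defined. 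So in the direction (a) $\Rightarrow$ (b) you must derive it from (a), while in the directions (b) $\Rightarrow$ (a) and (c) $\Rightarrow$ (a) it must be read as an implicit assertion of (b) and (c) --- otherwise, e.g.\ for $\omega_G=0$, condition (c) can hold vacuously (no composable pairs in $\on{gr}(\omega_G^\flat)\times\on{gr}(\omega_P^\flat)$) while (a) fails. With that single correction your argument goes through.
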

This is a simple extension of the characterization of multiplicative 2-forms on groupoids, and is proved similarly. 

Suppose $(\EE,A)$ is a $\wh{G}$-equivariant Manin pair, where $\EE$ is exact. The choice of an isotropic splitting 
$j\colon TM\to \EE$  identifies $\EE\cong \T_\eta M$ for a closed 3-form $\eta$. 
By the discussion in Section \ref{subsec:quasisym}, the morphism $R$ from Theorem \ref{th:main} is automatically exact, and is described by a multiplicative 2-form $\omega_G\in \Omega^2(G)$, making $G\rra M$ into a quasi-symplectic groupoid.

Suppose $P$ is a Hamiltonian $G$-space for $(\EE,A)$. We assume that the morphism $L$ \eqref{eq:lmap} is exact, that is, 
its image under the anchor surjects onto $\on{gr}(T\Phi)$. (This is not automatic; for instance, the morphism 
from Example \ref{ex:burcou} need not be exact.)   Then $L$ is represented by a 2-form $\omega_P$, with the property 
\[ \d\omega_P=-\Phi^*\eta.\]
By Proposition \ref{prop:ham}, $L$ is a module morphism with respect to $R$. 
The same argument as for Lemma \ref{lem:multiplicative} shows that this is 
equivalent to the fact that  the maps $\omega_P^\flat,\ \omega_G^\flat$ intertwine the $TG$-action on $TP$ with the $T^*G$-action on $T^*P$. By Proposition \ref{lem:anotherlemma}, this is equivalent to 
$\A^*\omega_P=\pr_2^*\omega_P+\pr_1^*\omega_P$. On the other hand, the uniqueness property in the definition of
morphism of Manin pairs is equivalent to the condition $\ker(\omega_P)\cap \ker(T\Phi)=0$. These two conditions are exactly the defining properties of a Hamiltonian space $(P,\omega_P)$ for a quasi-symplectic groupoid $(G,\omega_G)$ as in 
\cite{xu:mom}. 

\subsection{Morita equivalence} 
From the notion of Hamiltonian $G$-spaces for $\wh{G}$-equivariant Manin pairs, it is straightforward to define Morita equivalence of such Manin pairs. 

Recall that a Morita equivalence between Lie groupoids $G_1\rra M_1,\ G_2\rra M_2$ is a manifold $Q$ 
(the \emph{Hilsum-Skandalis bimodule})  with commuting 
left $G_1$-actions and right $G_2$-actions, 
\begin{equation}\label{eq:moritaequivalence}
\begin{tikzcd}
[column sep={5.5em,between origins},
row sep={4em,between origins},]
G_1    \arrow[d,shift left]\arrow[d,shift right]   & Q \arrow [dr, "J_2"'] \arrow[dl, "J_1"]  & G_2    \arrow[d,shift left]\arrow[d,shift right] \\
M_1 & & M_2
\end{tikzcd}
\end{equation}
where both actions are principal, and $J_1$ is the quotient map for the $G_2$-action while $J_2$ is the quotient map for the $G_1$-action. Note that a right action may be turned into a left action through 
groupoid inversion; hence we may think of $Q$ as a $G_1\times G_2$-manifold.  
Given another groupoid $G_3\rra M_3$  and a $G_2-G_3$-bimodule $Q'$, the 
manifold \[Q\circ Q'=(Q\times_{M_2} Q')/G_2\] serves as a $G_1-G_3$ bimodule for the underlying groupoids. For any Lie group $G$, the choice of $Q=Q$ with the $G\times G$-action by left and right multiplications defines a Morita equivalence of $G$ with itself; note that this serves as a unit under composition of Morita equivalences.

Suppose $(\EE_i,A_i),\ i=1,2$ are $\wh{G}_i$-equivariant Manin pairs, where $G_i\rra M_i$ are integrations of $A_i\Ra M_i$. A \emph{Morita bimodule} for these Manin pairs is a $G_1-G_2$ Morita bimodule $Q$ for the 
underlying groupoids, together with a $\wh{G}_1\times \wh{G}_2$-equivariant morphism of Manin pairs
\[ S\colon (\T Q,TQ)\da (\EE_1\times \ol{\EE}_2,A_1\times A_2)\]
having $(J_1,J_2)$ as its base map. The composition of such Morita bimodules for Manin pairs is given by reduction of Courant algebroids and their morphisms, similar to the discussions in \cite{bur:red} and 
\cite{cab:dir}. It is described by a commutative diagram of Courant morphisms
\[{\begin{tikzcd}
		[column sep={12em,between origins},row sep={4.5em,between origins},]
		\T Q\times \T Q'\arrow[r,dashed,"S\times S'"] \arrow[d, dashed]& \EE_1\times \ol{\EE}_2\times \EE_2\times \ol{\EE}_3\arrow[d, dashed] \\
		\T(Q\circ Q')\arrow[r,dashed,"{S\circ S'} "] & \EE_1\times \ol{\EE}_3
\end{tikzcd}}\] 
Here, the right vertical Courant morphism is defined by $(\zeta_1,\zeta_2,\zeta_2,\zeta_3)\sim
(\zeta_1,\zeta_3)$ while the left vertical morphism is given in terms of the inclusion 
$i\colon  Q\times_{M_2} Q'\to Q\times Q'$ and projection $\pi\colon Q\times_{M_2} Q'\to Q\circ Q'$ as 
$(\T \pi)\circ (\T i)^{-1}$ (where the superscript means the inverse relation, obtained by reversing arrows).
The fact that the bottom horizontal map is a Courant morphism follows from the theory in \cite{lib:dir}.

Given a $\wh{G}$-equivariant Manin pair $(\EE,A)$, the identity map of $(\EE,A)$ 
may be seen as a Morita equivalence, with the morphism $R$ from \eqref{eq:R} playing the role of $S$.

\begin{appendix}
\section{Courant bracket calculations}\label{app:A}

	%\subsection{Proof of Theorem \ref{th:main} (second part)}
%	\begin{proof}[]
In this appendix, we will finish the proof of Theorem \ref{th:main}.
		To show that $R$ is a Dirac structure with support on $\on{gr}(\tz,\sz)$, it remains to show that for any two sections of $\Pair(\EE)\times \TG$ whose restriction to $\on{gr}(\tz,\sz)$ 
		takes values in $R$, their Courant bracket again takes values in $R$. Since the action of bisections of $G$ 
		by left multiplication preserves Courant brackets, and since $R$ is $\wh{G}$-invariant, it suffices to check this along 
		$\Pair(M)\times M\subset \Pair(M)\times G$. Furthermore, it suffices to check on any collection of sections spanning 
		$R$. Using $R|_M=R^{(0)}\oplus \core(R)$, we will use sections of the units and 
		core of $R$, suitably extended to a neighborhood of $\Pair(M)\times M$. 
		
		Given $\gamma \in \Gamma(\core(\TG))$, the section 
		\[ \psi(\gamma)=((0,\beta(\gamma),\gamma^L)\in \Gamma(\Pair(\EE)\times \T G)\] 
		restricts to a section of $\core(R)=\on{gr}(\beta)$. Sections of this type span all of $\core(R)$.
		To compute brackets between sections of this type, we separate 
		into the cases $\gamma=\xi\in \Gamma(A)$ (thus $\beta(\gamma)=\iota_A(\xi)$ 
		and $\gamma=\nu\in \Gamma(T^*M)$ (thus $\beta(\gamma)=\a_\EE^*\nu$). 
		
		\begin{lemma}
			We have 
			\[ \Cour{\psi(\xi_1),\psi(\xi_2)}=\psi([\xi_1,\xi_2]),\ \  
			\Cour{\psi(\xi),\psi(\nu)}=\psi(\L_{\a_A(\xi)}\nu),\ \ 
			\Cour{\psi(\nu_1),\psi(\nu_2)}=0
			\]
			for $\xi_1,\xi_2,\xi\in \Gamma(A)$ and $\nu_1,\nu_2,\nu\in \Gamma(T^*M)$. 
		\end{lemma}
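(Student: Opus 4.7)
The plan is to exploit the product structure of the Courant bracket on $\Pair(\EE)\times \TG=\EE\times\ol{\EE}\times\TG$. Since each section $\psi(\gamma)=(0,\beta(\gamma),\gamma^L)$ has vanishing $\EE$-component, the Courant bracket of $\psi(\gamma_1)$ with $\psi(\gamma_2)$ reduces to two pieces: the $\ol{\EE}$-contribution $\Cour{\beta(\gamma_1),\beta(\gamma_2)}_{\EE}$ and the $\TG$-contribution $\Cour{\gamma_1^L,\gamma_2^L}_{\TG}$. The task is then to check, in each of the three cases $\xi$--$\xi$, $\xi$--$\nu$, $\nu$--$\nu$, that these two contributions assemble into $\psi$ of the claimed element of the core $\core(\TG)=A\oplus T^*M$.

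For the $\ol{\EE}$-contribution I would decompose $\beta=\iota_A-\a_\EE^*$ and invoke three standard Courant algebroid identities. First, since $A$ is a Dirac structure, $\iota_A$ preserves brackets, giving $\Cour{\iota_A(\xi_1),\iota_A(\xi_2)}_{\EE}=\iota_A([\xi_1,\xi_2])$. Next, the derivation identity $\Cour{e,\a_\EE^*\mu}=\a_\EE^*(\L_{\a_\EE(e)}\mu)$ together with $\a_\EE\circ\iota_A=\a_A$ yields $\Cour{\iota_A(\xi),-\a_\EE^*\nu}_{\EE}=-\a_\EE^*(\L_{\a_A(\xi)}\nu)=\beta(\L_{\a_A(\xi)}\nu)$. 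Finally, $\a_\EE\circ\a_\EE^*=0$ forces $\Cour{\a_\EE^*\nu_1,\a_\EE^*\nu_2}_{\EE}=0$.

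For the $\TG$-contribution, two cases are immediate: $\Cour{\xi_1^L,\xi_2^L}_{\TG}=[\xi_1^L,\xi_2^L]=[\xi_1,\xi_2]_A^L$ by \eqref{eq:bracket} (the Courant bracket on vector fields is the Lie bracket), and $\Cour{\nu_1^L,\nu_2^L}_{\TG}=0$ because the Courant bracket of two $1$-forms vanishes. The nontrivial case is $\Cour{\xi^L,\nu^L}_{\TG}=\L_{\xi^L}\nu^L$, which I would identify with $(\L_{\a_A(\xi)}\nu)^L$. Both sides are left-invariant sections of $\ker(\tz_{T^*G})$, so it suffices to compare their images in the core $T^*M$ at points of $M$; this last step amounts to a direct Lie derivative calculation using $T\sz(\xi^L)|_m=\a_A(\xi)|_m$.

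The main obstacle is this last left-invariant identity $\L_{\xi^L}\nu^L=(\L_{\a_A(\xi)}\nu)^L$, as it is the only place where the groupoid geometry interacts nontrivially with the Lie derivative on $M$; everything else is formal manipulation with Courant algebroid axioms. Granted this identity, combining the $\ol{\EE}$- and $\TG$-contributions in each case yields the three formulas stated in the lemma, using $\beta([\xi_1,\xi_2])=\iota_A([\xi_1,\xi_2])$ in the first case and $\beta(\L_{\a_A(\xi)}\nu)=-\a_\EE^*(\L_{\a_A(\xi)}\nu)$ in the second.
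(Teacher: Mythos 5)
Your proof is correct and follows essentially the same route as the paper's: split the bracket on $\EE\times\ol{\EE}\times\ol{\TG}$ componentwise (the $\EE$-slot being zero), then use that $A$ is a Dirac structure, the derivation identity $\Cour{e,\a_\EE^*\mu}=\a_\EE^*(\L_{\a_\EE(e)}\mu)$ together with $\a_\EE\circ\a_\EE^*=0$, and the left-invariant bracket relations $\Cour{\xi_1^L,\xi_2^L}=[\xi_1,\xi_2]^L$, $\Cour{\xi^L,\nu^L}=(\L_{\a_A(\xi)}\nu)^L$, $\Cour{\nu_1^L,\nu_2^L}=0$ on $\T G$. You give slightly more detail than the paper (notably on verifying $\L_{\xi^L}\nu^L=(\L_{\a_A(\xi)}\nu)^L$ via left-invariance and comparison of core values, which the paper simply asserts), and your sign bookkeeping with $\beta=(\iota_A,-\a_\EE^*)$ is consistent.
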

		\begin{proof}
			The first identity follows since $A\subset E$ is a Dirac structure, and using the bracket relations for 
			left-invariant sections of $TG$. The second identity follows from 
			$\Cour{\iota_A(\xi),\a_\EE^*(\mu)}=\a_\EE^*(\L_{\a(\xi)}\nu)$
			and $\Cour{\xi^L,\nu^L}=(\L_{\a(\xi)}\nu)^L$. The third identity holds 
			since $\Cour{\a_\EE^*(\nu_1),\a_\EE^*(\nu_2)}=0$ and $\Cour{\nu_1^L,\nu_2^L}=0$. 
		\end{proof}
		
		We also need sections of $\Pair(\EE)\times \TG$ whose restrictions to $\gr(\tz,\sz)$ span 
		$R^{(0)}=\on{gr}(\alpha)$.
		A section $\zeta\in \Gamma(\EE)$, determines a section
		\[ \phi_0(\zeta)=\big((\zeta,\zeta),\alpha(\zeta)\big)\in \Gamma(\Pair(\EE)\times \T G|_M)\]
		over $\Pair(M)\times M$. There is no \emph{canonical} extension of $\phi_0(\zeta)$  
		to a section over $\Pair(M)\times G$. However, we may construct an extension locally, near $\Pair(M)\times \{m_0\}$ for any given $m_0\in M$. The extension depends on the choice of a frame $\xi_1,\ldots,\xi_k\in \Gamma(A|_U)$ over an 
		open neighborhood $U\subset M$ of $m_0$. 
		
		\begin{lemma}
			The sections $\phi_0(\zeta),\ \zeta\in \Gamma(\EE)$ admit extensions to sections  
			$\phi(\zeta)$ of $\Pair(\EE)\times \T G$ over some neighborhood of $\Pair(M)\times U$, such that 
			$\phi(\zeta)$ restrict to sections of $R$, and satisfy 
			\[ \Cour{\psi(\xi_i),\phi(\zeta)}|_{\Pair(M)\times U}=0.\]
		\end{lemma}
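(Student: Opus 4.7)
The plan is to define $\phi(\zeta)$ componentwise by frame-induced parallel transport, chosen so that each factor of $\Cour{\psi(\xi_i),\phi(\zeta)}$ in $\Pair(\EE)\times\TG$ is killed on $\Pair(M)\times U$ by construction. I interpret ``extension of $\phi_0(\zeta)$'' loosely: the section $\phi(\zeta)$ must agree with $\phi_0(\zeta)$ on the submanifold $\Delta_U\subset\Pair(M)\times U$ where $\phi_0$ actually lies in $R$, leaving enough freedom away from $\Delta_U$ to kill the bracket.

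Shrinking $U$ if necessary, I would use the left-invariant vector fields $\xi_i^L$ to build a tubular neighborhood $V\subset G$ of $U$, then define the $\TG$-component $X$ of $\phi(\zeta)$ on $V$ by extending $X|_U=\alpha(\zeta)$ so that $\L_{\xi_i^L}X$ vanishes along $U$. For the second $\EE$-component $B$, for each fixed $(m_1,m)$ I would solve the first-order system $\Cour{\iota_A(\xi_i),B(m_1,\cdot,m)}_\EE=0$ as an ODE in the $m_2$-variable with initial data $B(m_1,m_1,m)=\zeta(m_1)$ on the partial diagonal $\{m_2=m_1\}$. The first $\EE$-component $A_1$ is then forced by the $R$-relation \eqref{eq:explicit} on $\on{gr}(\tz,\sz)$, and extended smoothly off the graph. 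Using the product Courant bracket as in the first sub-lemma, $\Cour{\psi(\xi_i),\phi(\zeta)}$ decomposes as $(0,\Cour{\iota_A(\xi_i),B}_\EE,\Cour{\xi_i^L,X}_{\TG})$; the first entry is automatic, while the second and third vanish on $\Pair(M)\times U$ by the parallel-transport conditions. The restriction of $\phi(\zeta)$ to $\on{gr}(\tz,\sz)$ takes values in $R$ because $\phi_0(\zeta)|_{\Delta_U}$ does, and the transport conditions on $B$ and $X$ precisely coincide with the infinitesimal generators of the $\wh{G}\times\wh{G}$-action preserving $R$ (Theorem~\ref{th:main}, using Definition~\ref{def:equivariantmaninpair} to identify the $\wh{A}$-action on $\EE$ with $\Cour{\iota_A(\xi_i),\cdot}_\EE$).

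The main technical hurdle is the simultaneous solvability of the $k$ ODEs $\Cour{\iota_A(\xi_i),B}_\EE=0$ for $i=1,\ldots,k$, which requires mutual compatibility of the frame-induced derivations on $\EE$. This is supplied by the Lie algebroid closure $\Cour{\iota_A(\xi_i),\iota_A(\xi_j)}_\EE=\iota_A([\xi_i,\xi_j])$, coming from the integrability of $A\subset\EE$; the derivations $\Cour{\iota_A(\xi_i),\cdot}_\EE$ thus furnish a Lie algebra action consistent with the frame's own bracket relations, and a standard Frobenius-type argument produces a unique smooth $B$ from the initial data. An analogous, easier consistency argument for the $\TG$-component follows from $[\xi_i^L,\xi_j^L]=[\xi_i,\xi_j]^L$ in \eqref{eq:bracket}.
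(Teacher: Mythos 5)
There is a genuine gap, and it lies at the heart of your construction. You propose to build the middle component $B$ by solving $\Cour{\iota_A(\xi_i),B}_\EE=0$ as ODEs ``in the $m_2$-variable.'' But the derivation $\Cour{\iota_A(\xi_i),\cdot}$ on $\Gamma(\EE)$ is a first-order operator along the vector field $\a_\EE(\iota_A(\xi_i))=\a_A(\xi_i)$ on $M$, and nothing forces these vector fields to be independent or even nonzero: for any Lie algebroid with isotropy (the extreme case being $M=\on{pt}$, $A=\g\subset\dd$) the equation $\Cour{\iota_A(\xi_i),B}=0$ degenerates to the pointwise algebraic condition that $B$ be annihilated by a generally nonzero skew-adjoint operator, which is incompatible with arbitrary initial data $B=\zeta$. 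Even where the anchors are independent, their flows in the $m_2$-slot starting from the partial diagonal do not sweep out a neighborhood unless $\on{rank}(A)=\dim M$, and the simultaneous solvability of all $k$ equations is a genuine flatness condition that the bracket identity $\Cour{\iota_A(\xi_i),\iota_A(\xi_j)}=\iota_A([\xi_i,\xi_j])$ does not supply (the relevant ``connection'' is a representation of $J^1(A)$, not of $A$, and has no reason to admit local flat sections). A further problem is the claimed decomposition $\Cour{\psi(\xi_i),\phi(\zeta)}=(0,\Cour{\iota_A(\xi_i),B},\Cour{\xi_i^L,X})$: the anchor of $\psi(\xi_i)$ is $(0,\a_A(\xi_i),\xi_i^L)$, so unless $B$ is a pullback from the second $M$-factor the middle component of the bracket also contains the derivative of $B$ along $\xi_i^L$; and a genuinely product-type ansatz is hard to reconcile with the requirement that the triple lie in $R$ along $\gr(\tz,\sz)$, since the relation \eqref{eq:explicit} couples all three components. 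Finally, weakening ``extension'' to agreement with $\phi_0(\zeta)$ only on the diagonal $\Delta_U$ would break the expansion \eqref{eq:phizeta}, which is used later and requires $\phi(\zeta)=((\zeta,\zeta),\alpha(\zeta))$ on all of $\Pair(M)\times U$.

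The paper's proof sidesteps every one of these issues by transporting the \emph{whole} triple along the single radial vector field $\sum_i y_i\wt{X}_i$, where $\wt{X}_i$ is the linear lift of $X_i=\a(\psi(\xi_i))=(0,\a_A(\xi_i),\xi_i^L)$ to the total space of $\Pair(\EE)\times\TG$ determined by $\L_{\wt{X}_i}=\Cour{\psi(\xi_i),\cdot}$. The crucial point is that the $G$-components $\xi_i^L$ make the $X_i$ everywhere linearly independent and transverse to $\Pair(M)\times U$, so the time-$1$ flow of $\sum_i y_iX_i$ gives a tubular neighborhood and the transport is an honest ODE with no integrability obstruction --- only radial flatness is imposed, which is why the conclusion is $\Cour{\psi(\xi_i),\phi(\zeta)}|_{\Pair(M)\times U}=0$ at $y=0$ only, rather than identically. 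Since the $\wt{X}_i$ are tangent to $R$ and the $X_i$ are tangent to $\gr(\tz,\sz)$, the transported section automatically keeps its values in $R$ along the graph. If you want to repair your argument, you must replace the componentwise, coordinatewise transports by this single coupled transport on the product.
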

		\begin{proof}
			The vector fields 	
			\[ X_i=\a(\psi(\xi_i))=(0,\a_A(\xi_i),\xi_i^L)\]
			span the normal bundle to $\Pair(M)\times U$. 
			The vector fields $X_i$ lift to linear vector fields $\wt{X}_i$ on the total space of $\Pair(\EE)\times \TG$, in such a way that 
			the resulting Lie derivative on sections is given by 
			\[ \L_{\wt{X}_i}=\Cour{\psi(\xi_i),\cdot}.\] 
			Note that $\wt{X}_i$ are tangent to $R$. 
			
			The vector fields $X_i$ determines a germ of a tubular neighborhood embedding 
			\[ F\colon (\Pair(M)\times U)\times \R^k\to \Pair(M)\times G
			\]
			(defined locally, after shrinking $U$ if needed and replacing $\R^k$ by some small open neighborhood of $0$)
			taking $p\in   \Pair(M)\times U$ and 
			$y=(y_1,\ldots,y_k)\in R^k$
			to the time-1 flow of $\sum y_i X_i$ 
			applied to $p$. 
			
			Note that $X_i|_{\Pair(M)\times U}$ corresponds to $\f{\p}{\p y_i}|_{\Pair(M)\times U}$ under this map. 
			Using $\wt{X}_i$, we obtain a lift of $F$ to a tubular neighborhood embedding 
			\[ \wt{F}\colon \big((\Pair(\EE)\times \TG)|_{\Pair(M)\times U}\big)\times \R^k\to \Pair(\EE)\times \TG.\] 
			%this tubular neighborhood restricts to $R|_U\times \R^k\to R$. 
			This gives the desired extension of $\phi_0(\zeta)$ to $\phi(\zeta)$, with the property that $\phi(\zeta)$ restricts to sections of 
			$R$, and 
			$\Cour{\psi(\xi_i),\phi(\zeta)}|_{\Pair(M)\times U}=\big(\L_{\wt{X}_i}\phi(\zeta)\big)|_{\Pair(M)\times U}=0$. 
		\end{proof}
		
		Consider $y_1,\ldots,y_k$ as normal coordinates for the tubular neighborhood embedding. We have 
		$\pr_{A^*}\zeta=\sum_i \l\zeta,\xi_i\r \ \d y^i|_{U\times \{0\}}$. Hence, the extension $\phi(\zeta)$ is of the  form
		\begin{equation}\label{eq:phizeta}
		\phi(\zeta)=\Big((\zeta,\zeta)+O(y),\ \a_\EE(\zeta)+\sum_i\l \xi_i,\zeta\r \d y_i +O(y)\Big)\end{equation}
		where $O(y)$ signifies sections that vanish for $y=0$. 
		
		By construction, the restriction of sections of the form $\psi(\gamma),\ \phi(\zeta)$ to 
		$\on{gr}(\tz,\sz)\subset \Pair(M)\times G$ takes values in $R$.  
		We have to show that this is also true for Courant brackets between such 
		sections. We only need to check at points of 
		\[  \gr_0(\tz,\sz)=\on{gr}(\tz,\sz)\cap (\Pair(M)\times U).\]
		(Observe that this is  just a copy of $U$, embedded diagonally.) The restriction of 
		$\Cour{\psi(\xi_i),\phi(\zeta)}$ is zero, by construction. For 
		$\Cour{\phi(\zeta),\phi(\zeta')}$, we have, using 
		\eqref{eq:phizeta}, 
		\[ \Cour{\phi(\zeta),\phi(\zeta')}=\phi(\Cour{\zeta,\zeta'})+
		\Big(O(y),\sum_i f_i \d y_i+O(y)\Big)\]
		for certain functions $f_i\in C^\infty(\Pair(M)\times U)$. To show that $f_i|_{\gr_0(\tz,\sz)}=0$, it suffices to show that the pairings with 
		$\psi(\xi_j)$ are zero.   Using the properties of Courant brackets, we have 
		\[ \l \Cour{\phi(\zeta),\phi(\zeta')},\psi(\xi_i)\r
		%=-\l \phi(\zeta'),\Cour{\phi(\zeta),\psi(\xi_i)}\r+\L_{\a(\phi(\zeta))} \l \phi(\zeta'),\psi(\xi_i)\r\]
		%\[
		=\L_{\a(\phi(\zeta))} \l \phi(\zeta'),\psi(\xi_i)\r-\L_{\a(\phi(\zeta'))} \l \phi(\zeta),\psi(\xi_i)\r
		+\l \phi(\zeta'),\Cour{\psi(\xi_i),\phi(\zeta)}\r
		\]
		Upon restriction to $\gr_0(\tz,\sz)$, all of these terms vanish (note that the vector field $\a(\phi(\zeta))$ is tangent to 
		$\gr_0(\tz,\sz)$)). Hence 
		\[ \Cour{\phi(\zeta),\phi(\zeta')}|_{\gr_0(\tz,\sz)}
		=\phi(\Cour{\zeta,\zeta'})|_{\gr_0(\tz,\sz)}.
		\]
		takes values in $R$. Consider next the Courant bracket $\Cour{\phi(\zeta),\psi(\nu)}$.	
		Using that $\a(\psi(\nu))=0$ for $\nu\in \Gamma(T^*M)$ we find, after short calculations, 
		\begin{align*}
		\l 	\Cour{\phi(\zeta),\psi(\nu)},\phi(\zeta')\r&=\L_{\a(\phi(\zeta))}\l\psi(\nu),\phi(\zeta')\r
		+\l \psi(\nu),\Cour{\phi(\zeta),\phi(\zeta')}\r\\
		\Cour{\phi(\zeta),\psi(\nu)},\psi(\xi_i)\r
		%=\L_{\a(\phi(\zeta))}\l\psi(\nu),\phi(\xi_i)\r	+\l \psi(\nu),\Cour{\phi(\zeta),\psi(\xi_i)}\r\]\[ 
		&=\L_{\a(\phi(\zeta))}\l\psi(\nu),\phi(\xi_i)\r 
		%-\L_{\a(\psi(\nu))}\l \phi(\zeta),\psi(\xi_i)\r
		-\l \psi(\nu),\Cour{\psi(\xi_i),\phi(\zeta)}\r\\
		\l 	\Cour{\phi(\zeta),\psi(\nu)},\psi(\nu')\r
		&=
		%\L_{\a(\psi(\nu'))}\l \phi(\zeta),\psi(\nu)\r-\L_{\a(\psi(\nu))}\l \phi(\zeta),\psi(\nu')\r
		-\l\phi(\zeta),\Cour{\phi(\nu),\phi(\nu')}\r
		\end{align*}
		for all $\zeta'\in \Gamma(\EE),\nu'\in \Gamma(T^*M),\xi_i\in \Gamma(A)$. 
		All of these expressions vanish after restriction to $\gr_0(\tz,\sz)$. 
		Since the sections of the form $\phi(\zeta),\psi(\nu),\psi(\xi_i)$ span $R$ at 
		$\gr_0(\tz,\sz)$,  and $R$ is Lagrangian, the proof is complete. 
	
\section{More on $\VB$-groupoids}\label{app:B} 
\subsection{$\VB$-groupoids with trivial base groupoid}
Given vector bundles $B,C\to M$ and a bundle map 
\[ \varrho\colon C\to B\] 
the direct sum $J=C\oplus B$ has the structure of a $\VB$-groupoid, with trivial base groupoid, 
\begin{equation}\label{eq:trvibase} {\begin{tikzcd}
	[column sep={8em,between origins},row sep={4.5em,between origins},]
	C\oplus B\arrow[r,shift right] \arrow[r,shift left]\arrow[d]& B\arrow[d] \\
	M\arrow[r,shift right] \arrow[r,shift left]& M
	\end{tikzcd}}\end{equation}
The target and source maps are 
$ \tz_J(c,b)=b,\ \ \sz_J(c,b)=b+\varrho(c)$ and multiplication of composable elements reads as
\[ (c_1,b_1)\circ (c_2,b_2)=(c_1+c_2,b_1).\]
We have $\core(C\oplus B)=C$, with core-anchor the given map $\varrho$. 
Note that $C\oplus B\rra B$ is isomorphic to the action groupoid for the $C$-action on $B$ given by translation, 
$b\mapsto b-\varrho(c)$. 

%The dual $\VB$-groupoid to $J=C\oplus B\rra B$ is the action groupoid for the dual translation by $-\varrho^*$. That is$J^*=C^*\oplus B^*\rra C^*$ with $\sz_{J^*}(\gamma,\beta)=\gamma,\ \tz_{J^*}(\gamma,\beta)=\gamma-\varrho^*(\beta)$, and multiplication $(\gamma_1,\beta_1)\circ (\gamma_2,\beta_2)=(\gamma_2,\beta_1+\beta_2)$. 	

%The $\VB$-algebroid $\on{Lie}(C\oplus B)$ is the action Lie algebroid $C\oplus B\Ra B$ for the infinitesimal translation action. 
Denote by 
\[ K\rra M\] the fat groupoid of the $\VB$-groupoid \eqref{eq:trvibase}. 
Since  $\sz_K=\tz_K$, hence $K$ is a family of Lie groups $K_m$. By definition, $K_m$ is the set of subspaces of $(C\oplus B)|_m$ that are complements to both $\ker(\tz_J)|_m$ and $\ker(\sz_J)|_m$. We may think of complements to $\ker(\tz_J)|_m\cong C_m$ as graphs of linear maps 
$\ff\colon B_m\to C_m$. Since $\sz_J(\ff(b)+b)=
(\on{id}_B+\varrho\circ \ff)(b)$, this leads to the description  
\[ K_m=\{\ff\in \Hom(B_m,C_m)|\ \det( \on{id}_B+\varrho\circ \ff )\neq 0\}\]
with the group(oid) multiplication given by 
\begin{equation}\label{eq:multinv} \ff_1\ff_2=\ff_1+\ff_2+\ff_2\circ \varrho\circ \ff_1.
\end{equation}
%
%(We will not make a notational distinction between \emph{elements} of $K$ and (local) bisections.) 
One verifies that $\on{Inv}_K(\ff)=-\ff\circ  (\on{id}_B+\varrho\circ \ff)^{-1}$. 
The Lie algebroid $\k\Ra M$ of $K\rra M$ is the vector bundle,  $\k=C\otimes B^*=\Hom(B,C)$. The anchor map is zero; thus $\k$ is a family of Lie algebras. Viewing the sections as bundle maps 
$\hh\colon B\to C$, the Lie algebroid bracket on $\k$ is given by 
\[ [\hh_1,\hh_2]=\hh_2\circ \varrho\circ \hh_1-\hh_1\circ \varrho\circ \hh_2.\]

As a special case of \eqref{eq:manyactions}, we have a groupoid action $ K\times K\circlearrowright C\oplus B$ by left 
and right multiplications, and the corresponding adjoint actions on $C$ and $B$. 

\begin{proposition}\label{prop:action}The representations of $K$ on $B,C$ are given by 
	\[
		\Ad^B_\ff=(\on{id}_B+\varrho\circ \ff)^{-1},\ \ \ 
	\Ad^C_\ff= (\on{id}_C+\ff\circ \varrho)^{-1}.
	%\on{id}_C-\ff\circ \Ad^B(\ff)\circ \varrho.
	\]
%The core-anchor intertwines these actions: $\Ad^C_\ff\circ \varrho=\varrho\circ \Ad^B_\ff$. 
The $K$-action on $J=C\oplus B$ is the direct sum of these representations.
The left and right multiplications are given by 
\begin{align*}
	l_\ff(c+b)&=c+\Ad_\ff^C f(b)+\Ad_\ff ^B b,\\ 
	r_{\ff}^{-1}(c+b)&=\Ad_\ff^C(c-f(b))+b.
\end{align*}
\end{proposition}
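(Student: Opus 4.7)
The plan is to represent each element $\ff \in K_m$ by the linear subspace $\wh{\cg}_\ff = \{\ff(b)+b \mid b \in B_m\} \subset J|_m$ transverse to both $\ker(\tz_J)$ and $\ker(\sz_J)$, and then simply unwind the definitions of $l_\ff$, $r_{\ff^{-1}}$ and the groupoid multiplication in $J$. The computation rests on a single elementary commutation identity, namely
\[
(\id_C + \ff \circ \varrho) \circ \ff \;=\; \ff \circ (\id_B + \varrho \circ \ff),
\]
obtained by expanding both sides to $\ff + \ff\varrho\ff$. Inverting, this gives $\ff \circ \Ad_\ff^B = \Ad_\ff^C \circ \ff$, and in particular the formula $\ff^{-1} = -\ff \circ (\id_B + \varrho\circ \ff)^{-1}$ from \eqref{eq:multinv} rewrites as $\ff^{-1} = -\Ad_\ff^C \circ \ff = -\ff \circ \Ad_\ff^B$.

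First I would compute $l_\ff(c+b)$. By definition this is $v \circ (c+b)$, where $v = \ff(b') + b' \in \wh{\cg}_\ff$ is the unique element satisfying $\sz_J(v) = \tz_J(c+b) = b$. The source equation $(\id_B + \varrho\circ\ff)(b') = b$ gives $b' = \Ad_\ff^B(b)$. Applying the multiplication rule $(c_1,b_1)\circ(c_2,b_2) = (c_1 + c_2,\, b_1)$ yields $l_\ff(c+b) = c + \ff(b') + b'$, and the commutation identity converts $\ff(b') = \ff\Ad_\ff^B(b)$ into $\Ad_\ff^C \ff(b)$. For the right translation $r_{\ff^{-1}}(c+b) = (c+b) \circ w$, I would choose the unique $w = \ff^{-1}(b'') + b'' \in \wh{\cg}_{\ff^{-1}}$ with $\tz_J(w) = \sz_J(c+b) = b + \varrho(c)$, so $b'' = b + \varrho(c)$. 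The multiplication rule then gives $r_{\ff^{-1}}(c+b) = c + \ff^{-1}(b + \varrho(c)) + b$, and a short simplification using $\ff^{-1} = -\Ad_\ff^C\circ\ff$ together with $(\id_C + \ff\varrho)^{-1}$ being the inverse of $\id_C + \ff\varrho$ identifies this with $\Ad_\ff^C(c - \ff(b)) + b$.

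Finally, the adjoint representations on the sides $B$ and $C$ follow by specializing these formulas to units and to the core. Taking $c = 0$ in the composition $\Ad_\ff(b) = l_\ff r_{\ff^{-1}}(b)$, the calculation of $l_\ff(\ff^{-1}(b) + b)$ has its $C$-component equal to $\ff^{-1}(b) + \Ad_\ff^C\ff(b)$, which vanishes by the identity $\ff^{-1} = -\Ad_\ff^C\ff$; the remaining $B$-component is $\Ad_\ff^B(b)$. Taking $b = 0$ for an element $c \in \core(J) \cong \ker(\tz_J)|_M$, the formulas collapse immediately to $\Ad_\ff(c) = \Ad_\ff^C(c)$. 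That the action on $J|_M$ itself is the direct sum of these representations follows from $\wh{G}$-equivariance of the splitting $J|_M \cong C \oplus B$ provided by $\tz_J$ and the inclusion of units, together with the naturality $\tz_J(\Ad_\ff\, x) = \Ad_\ff^B(\tz_J(x))$ noted in Section~\ref{subsec:fat}.

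The main obstacle is really just bookkeeping: keeping track of which copy of $B$ or $C$ an expression sits in, and consistently applying the commutation identity to move $\ff$ past $(\id_B + \varrho\ff)^{-1}$ versus $(\id_C + \ff\varrho)^{-1}$. No deeper input is required beyond the description of $\wh{G}$ as a Grassmannian bundle and the trivial-base multiplication rule $(c_1,b_1)\circ(c_2,b_2) = (c_1+c_2,b_1)$.
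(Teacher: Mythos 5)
Your proof is correct and follows essentially the same route as the paper: both unwind the Grassmannian description of $K_m$ (an element $\ff$ as the subspace $\{\ff(b)+b\}$) together with the trivial-base multiplication $(c_1,b_1)\circ(c_2,b_2)=(c_1+c_2,b_1)$, using the commutation identity $\ff\circ(\id_B+\varrho\circ\ff)=(\id_C+\ff\circ\varrho)\circ\ff$ to simplify. The only difference is organizational — the paper computes $\Ad_\ff$ directly as the triple composition $(\ff(b_1)+b_1)\circ(c+b)\circ(\on{Inv}_K(\ff)(b_2)+b_2)$ and declares the left/right translations "similar," whereas you derive $l_\ff$ and $r_{\ff^{-1}}$ first and obtain $\Ad_\ff$ by composing them, which is equally valid.
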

\begin{proof}
%The infinitesimal actions of $\hh\in \k_m$ are obtained from the global ones by differentiation. 
	The adjoint action of $\ff\in K_m$ on $C_m\oplus B_m$ is computed as 
	\[ \Ad_\ff(c+b)=(\ff(b_1)+b_1)\circ (c+b)\circ (\on{Inv}_K(\ff)(b_2)+b_2)\]
	%=(\ff(b_1)+ (\on{Inv}_K(\ff)(b_2)+c+,\]
	where $b_1,b_2\in B_m$ are uniquely determined by requiring that these elements are composable. 
	One obtains the conditions $b_1+\varrho(\ff(b_1))=b$
	and $b+\varrho(c)=b_2$. Using \eqref{eq:multinv}, one arrives at 
		\[ \Ad_\ff(c+b)=\Big(c-\ff\circ (\on{id}_B+\varrho\circ \ff)^{-1} \varrho(c)\Big)
		+(\on{id}_B+\varrho\circ \ff)^{-1} (b).\]
The first term may be written as $ (\on{id}_C+\ff\circ \varrho)^{-1}(c)$. The calculation for $r_{\ff}^{-1}$ is similar. 
\end{proof}

\begin{remarks}\label{rem:comments}
	\begin{enumerate}
		\item\label{it:inf} By differentiation, the infinitesimal representations of $\k$ on $B,C$ are given by 
		\[
		\ad^B_h=-\varrho\circ \hh,\ \ \ad^C_h=-\hh\circ \varrho.\]
		%\[ \ad_\hh(b)=-\varrho(\hh(b)),\ \ \ad_{\hh}(c)=-\hh(\varrho(c)).\]
		\item The source map $\sz_J$ intertwines $l_\ff$ with the trivial action on 
		$B$, while $\tz_J$ intertwines it with $\Ad^B_\ff$; for  $r_{\ff}^{-1}$ the roles of source and target 
		are reversed. So, 
		\[ \sz_J\circ l_\ff=\sz_J,\ \ 
		\tz_J\circ l_\ff=\Ad^B_\ff\circ \tz_J,\ \ 
		\sz_J\circ r_\ff^{-1}=\Ad^B_\ff\circ \sz_J,\ \ 
		\tz_J\circ r_\ff^{-1}=\tz_J.
		\]
		\item 	If $\varrho=0$, we simply have $K=\Hom(B,C)$. At the  opposite extreme, if 
		$C=B,\ \varrho=\on{id}$, 
		the map $\Ad^B$ gives an identification 
		$K=\on{GL}(B)$.  
		\item In general, $K_m$ retracts onto $\on{GL}(C_m/\ker(\varrho_m))$. In particular, it has exactly two connected components unless $\varrho_m=0$. 
	\end{enumerate}
\end{remarks}

Replacing $\varrho$ with its dual map $\varrho^*\colon C^*\to B^*$, we obtain a $\VB$-groupoid $B^*\oplus C^*\rra C^*$. 
\begin{proposition}\label{prop:dualact}
The $\VB$-groupoid $B^*\oplus C^*\rra C^*$ defined by the dual map $\varrho^*\colon C^*\to B^*$ is the dual to 
$C\oplus B\rra B$, relative to the pairing 
\[ \l \gamma+\beta,c+b\r=\l \gamma,c\r+\l\beta,b\r+\l\beta,\varrho (c)\r.\]
The corresponding adjoint actions of $K$ on $B^*,C^*$ are 
\[ \Ad^{B^*}_\ff(\beta)=\on{id}_{B^*}+f^*\varrho^*,\ \ \ \Ad_\ff^{C^*}(\gamma)=\on{id}_{C^*}+\varrho^* f^*,\]
and the  actions by left multiplication and right multiplication  on $C^*\oplus B^*$ are given by 
\[ l_\ff(\beta+\gamma)=\beta-f^*\gamma+\Ad_\ff^{C^*}\gamma,\]
\[ r_{\ff^{-1}}(\beta+\gamma)=\Ad_\ff^{B^*}\beta+f^*\gamma+\gamma.\]
\end{proposition}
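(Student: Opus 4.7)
The plan is to establish the three claims in order, exploiting the universal characterization of the dual $\VB$-groupoid: the structure on $J^*$ is the unique groupoid structure such that the natural fiberwise pairing $J^*\oplus J\to\R$ is a groupoid morphism (with $\R$ regarded as a groupoid over a point).

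For claim (1), I would first check non-degeneracy of the stated pairing $\l\gamma+\beta,c+b\r=\l\gamma,c\r+\l\beta,b\r+\l\beta,\varrho(c)\r$ fiberwise over $m\in M$; this is immediate, since the pairing restricts to the canonical ones between units and cores ($B\leftrightarrow C^*$ and $C\leftrightarrow B^*$). Next, I would verify that the pairing is a groupoid morphism: given composable $(\beta_1+\gamma_1),(\beta_2+\gamma_2)$ in $B^*\oplus C^*\rra C^*$ and composable $(c_1+b_1),(c_2+b_2)$ in $C\oplus B\rra B$, with source/target matching (so $\gamma_1+\varrho^*(\beta_1)=\gamma_2$ and $b_1+\varrho(c_1)=b_2$, together with the base-matching required for $J^*\oplus J$), the identity
\[\l(\beta_1+\beta_2)+\gamma_1,\ (c_1+c_2)+b_1\r=\l\beta_1+\gamma_1,c_1+b_1\r+\l\beta_2+\gamma_2,c_2+b_2\r\]
reduces to a direct bilinear expansion in which the correction term $\l\beta,\varrho(c)\r$ precisely accounts for the shift $b_2=b_1+\varrho(c_1)$ (and dually). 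This identifies the stated $\VB$-groupoid, with core-anchor $\varrho^*\colon B^*\to C^*$, with $J^*$.

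For claim (2), I would use that the pairings $B\times B^*\to\R$ and $C\times C^*\to\R$ are $K$-invariant (being restrictions of the $\Ad_\ff$-invariant pairing $J^*\oplus J\to\R$ to units and cores respectively). Thus $\Ad^{B^*}_\ff=((\Ad^B_\ff)^{-1})^*$ and $\Ad^{C^*}_\ff=((\Ad^C_\ff)^{-1})^*$, and substituting the formulas of Proposition~\ref{prop:action} together with $(\on{id}_B+\varrho\ff)^*=\on{id}_{B^*}+\ff^*\varrho^*$ yields the stated expressions. I would also record the dual of the identity $\ff\circ\Ad^B_\ff=\Ad^C_\ff\circ\ff$, namely $(\Ad^B_\ff)^*\ff^*=\ff^*(\Ad^{C^*}_\ff)^{-1}$, which will be used below.

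For claim (3), I would apply the definition $l_\ff(x)=v\circ x$, where $v$ is the unique element of the annihilator $L_\ff^\perp\subset J^*|_m$ of $L_\ff=\{\ff(b')+b':b'\in B_m\}$ that is composable with $x$. The annihilator condition $\l\gamma_v+\beta_v,\ff(b')+b'\r=0$ for all $b'$ expands to $\ff^*\gamma_v+(\on{id}_{B^*}+\ff^*\varrho^*)\beta_v=0$, i.e.\ $\beta_v=-(\Ad^{B^*}_\ff)^{-1}\ff^*\gamma_v=-\ff^*(\Ad^{C^*}_\ff)^{-1}\gamma_v$ by the dual identity above. Writing $x=\beta+\gamma$, the composability condition $\gamma_v+\varrho^*(\beta_v)=\gamma$ in $J^*$ simplifies, using $\Ad^{C^*}_\ff-\varrho^*\ff^*=\on{id}_{C^*}$, to $\gamma_v=\Ad^{C^*}_\ff\gamma$, and hence $\beta_v=-\ff^*\gamma$. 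Then $v\circ x=(\beta_v+\beta)+\gamma_v=\beta-\ff^*\gamma+\Ad^{C^*}_\ff\gamma$, as asserted. The formula for $r_{\ff^{-1}}$ is established by the entirely analogous calculation with the roles of source and target exchanged, or alternatively by applying the identity $\Ad_\ff=l_\ff\circ r_\ff^{-1}$ and the already established formula for $\Ad_\ff$ on $J^*$.

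The main obstacle is purely bookkeeping: the non-standard correction term $\l\beta,\varrho(c)\r$ in the pairing forces one to repeatedly invoke the dual intertwining identity $(\Ad^B_\ff)^*\ff^*=\ff^*(\Ad^{C^*}_\ff)^{-1}$ to clear up composites of $\ff^*$, $\varrho^*$ and the adjoint operators; once this identity is in hand, every computation collapses to a few lines.
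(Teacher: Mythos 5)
Your proposal is correct and follows essentially the same route as the paper, which likewise establishes the dual structure by checking that the shifted pairing is a groupoid morphism and then pins down the $K$-actions through invariance of that pairing under $\Ad_\ff$, $l_\ff$ and $r_\ff^{-1}$. The only (harmless) variation is that for $l_\ff$ you \emph{derive} the formula by computing the annihilator subspace $L_\ff^\perp$ and composing in $J^*$, using the identity $(\on{id}_{B^*}+\ff^*\varrho^*)^{-1}\ff^*=\ff^*(\on{id}_{C^*}+\varrho^*\ff^*)^{-1}$, whereas the paper merely \emph{verifies} the stated formulas against the pairing; both computations check out.
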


\begin{proof}
These formulas are obtained by direct calculation. In particular, one verifies that the proposed pairing satisfies 
$\l \tau_1\circ \tau_2,z_1\circ z_2\r=\l\tau_1,z_1\r+\l \tau_2,z_2\r$ for composable elements 
$\tau_i\in B^*\oplus C^*,\ z_i=C\oplus B$. Similarly, one checks $\l l_\ff \tau,\ l_\ff z\r=\l \ff,z\r
=\l r_\ff^{-1}\tau,r_{\ff}^{-1}z\r$ for 
$\tau\in B^*\oplus C^*,\ z\in C\oplus B$. 	
\end{proof}

Given a $\VB$-groupoid $J\rra B$ over $G\rra M$, as in \eqref{eq:vbg}, the restriction $J|_M\rra B$ is of the form
$J|_M=C\oplus B$ considered above, with $C=\core(J)\cong \ker(\tz_J)|_M$,  and with $\varrho$ the core-anchor. The fat groupoid $\wh{G}$ of $J$ fits into an exact sequence
\begin{equation}\label{eq:exact1}
1\to K\to \wh{G}\to G\to 1.
\end{equation}

\begin{remark}	
	Observe that the source fibers of $\wh{G}$ are principal $K_m$-bundles
	\[ \sz_{\wh{G}}^{-1}(m)\to \sz_G^{-1}(m)\]
	over the source fibers  of $G$. In particular, we see that even when the groupoid $G$ has source connected 
	(or	source-simply connected) fibers,  this is not usually the case 
	for $\wh{G}$. 
	%	For example, if $J=\Pair(V)$ is the pair groupoid of a vector bundle $V\to M$, then $\varrho(c)=\on{id}_C$, and $\wh{G}$ is the gauge groupoid. 
\end{remark} 
%		

%\begin{remark}	If $D\Ra B$ is a $\VB$-algebroid with base Lie algebroid $A\Ra M$, then the corresponding fat Lie algebroid	$\wh{A}\ra M$ fits into an exact sequence 	\begin{equation}\label{eq:exact2}	0\to \k\to \wh{A}\to A\to 0.\end{equation}	The representation of $\wh{A}\Ra M$ on $\core(D)=C$ restricts to the representation $\ad^C$ 	described above. In particular, the $\k$-action on $C$ lifts canonically to an action of $K$ (regardless of integrability of $A$). \end{remark}

\subsection{The action of $K\rra M$ on  Lie algebroids and Manin pairs} 
Let $A\Ra M$ be a Lie algebroid. The construction of the previous section, taking $\varrho$ to be the anchor map 
$\a\colon A\to TM$, gives a groupoid $K\rra M$ with an action on $A$. The Lie algebroid 
$\k=\Hom(TM,A)=T^*M\otimes A$ fits into the jet sequence 
\[ 0\to \k\to J^1(A)\to A\to 0.\]
\begin{lemma}
The infinitesimal $\k$-action $h\mapsto \ad^A_h$ agrees with the action as a subalgebroid of $J^1(A)$. 
Given an integration $G\rra M$, the $K$-action $f\mapsto \Ad^A_\ff$ agrees with the action as a subgroupoid 
of $K\subset J^1(G)$. 	
\end{lemma}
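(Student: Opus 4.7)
The strategy is to verify both statements by direct computation, exploiting that, for any $\VB$-groupoid $J\rra B$ over $G\rra M$, the kernel $K$ of $\wh G\to G$ is precisely the fat groupoid of the restriction $J|_M$, so that the abstract description of Proposition \ref{prop:action} automatically matches the restriction of the $\wh G$-action on the core. Applied to $J=TG$, this yields the claimed formulas for the $\k\subset J^1(A)$ and $K\subset J^1(G)$ actions on $A=\core(TG)$.

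For the infinitesimal claim, the $J^1(A)$-representation on $A$ is determined on $1$-jets of sections by $\nabla_{j^1\xi}\zeta=[\xi,\zeta]_A$. An element $\alpha\otimes\xi\in\k|_m=T^*_mM\otimes A|_m$ may be written locally as $(j^1(f\xi)-f\cdot j^1(\xi))|_m$ for any $f\in C^\infty(M)$ with $df|_m=\alpha$. Applying the Lie algebroid Leibniz identity $[f\xi,\zeta]_A=f[\xi,\zeta]_A-(\a(\zeta)f)\,\xi$ gives
\[
\nabla_{\alpha\otimes\xi}\zeta\;=\;-\langle\alpha,\a(\zeta)\rangle\,\xi,
\]
which matches the formula $\ad^A_h\zeta=-h(\a\zeta)$ from Remarks \ref{rem:comments} once $h$ is identified with the bundle map $TM\to A$, $v\mapsto\langle\alpha,v\rangle\xi$.

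For the group-level claim, I realize $\wh g=W\in K_m\subset J^1(G)_m$ explicitly as the graph $W_\ff=\{u+\ff(u):u\in T_mM\}\subset T_mG$ of a linear map $\ff\colon T_mM\to A_m$ with $\id+\a\circ\ff$ invertible; here $T_mG=T_mM\oplus A_m$ uses the paper's convention $A=\ker T\tz|_M$, which makes the core-anchor of $TG$ equal $+\a_A$. For $\zeta\in A_m$, the lift $\zeta^L|_m\in\ker\tz_{TG}|_m$ is simply $\zeta$ itself. Computing $\Ad_W(\zeta)=l_W\,r_{W^{-1}}(\zeta)$ proceeds in two steps. The composability requirement for right multiplication selects the element $v=\a(\zeta)+\ff(\a(\zeta))\in W$, and the tangent-groupoid multiplication formula $V_1\circ V_2=u_1+(w_1+w_2)$ (with composability $u_2=u_1+\a(w_1)$) yields $r_W(\zeta)=(\id+\ff\circ\a)(\zeta)$, so $r_{W^{-1}}(\zeta)=(\id+\ff\circ\a)^{-1}(\zeta)$. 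The composability requirement for left multiplication on any vector in $A_m$ forces the chosen element of $W$ to be the unit $0\in T_mG$, so $l_W$ acts as the identity on $A_m$. Combining gives $\Ad_W(\zeta)=(\id+\ff\circ\a)^{-1}(\zeta)$, matching the formula $\Ad^C_\ff=(\id+\ff\circ\varrho)^{-1}$ of Proposition \ref{prop:action}. As a consistency check, differentiating at $\ff=0$ recovers $-\ff\circ\a$, in agreement with the infinitesimal claim. The only real obstacle is careful bookkeeping of source/target conventions and signs for the tangent groupoid; once these are fixed, the computations unwind to the stated formulas.
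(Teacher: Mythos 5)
Your proof is correct and follows essentially the same route as the paper: the infinitesimal statement is verified by the identical computation $[f\xi,\zeta]-f[\xi,\zeta]=-\langle df,\a(\zeta)\rangle\xi$, and the group-level statement rests on the observation that $K$ is the fat groupoid of $TG|_M$ acting on $\core(TG)=A$. The paper simply cites this restriction together with Proposition \ref{prop:action}, whereas you re-derive that proposition's formula explicitly in $T_mG=T_mM\oplus A_m$; the extra computation is sound (including the sign conventions for the core-anchor) but not a different argument.
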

\begin{proof}
	For holonomic sections $j^1(\sigma)$, the $J^1(A)$-representation is given by 
	\[  j^1(\sigma)\cdot\tau=[\sigma,\tau],\ \ 
	j^1(\sigma)\cdot X=[\a(\sigma),X]
	\]
	for $\tau\in \Gamma(A),\ X\in \Gamma(TM)$. 	The inclusion 
	$\k=T^*M\otimes A\to J^1(A)$ is given on the level of sections by $\d f\otimes \sigma\mapsto j^1(f\sigma)-f j^1(\sigma)$. 
	Once hence obtains 
	\begin{equation}\label{eq:br} (d f\otimes \sigma).\tau=[f\sigma,\tau]-f[\sigma,\tau]=-\l \d f,\a(\tau)\r \sigma\end{equation}
	consistent with $\ad_h^A=-h\circ \a$ (see Remark \ref{rem:comments}\ref{it:inf}). 
	If $A\Ra M$ integrates to a Lie groupoid $G\rra M$, then the $\wh{A}$-representation integrates to a $J^1(G)$-action on $A$. 
	The latter comes from the action of the fat groupoid $\wh{G}=J^1(G)$ on the $\VB$-groupoid $TG\rra TM$, and this restricts to the action of $K\subset \wh{G}$ on $TG|_M$. 
\end{proof}

Suppose now that $(\EE,A)$ is a Manin pair. Then the $K$-representation on $A$, given by 
$f\mapsto \Ad_f^A=(\id+\ff\circ \a)^{-1}$,  extends to a representation on $\EE$, by 
\[ f\mapsto U_\ff=(\on{id}_\EE+\iota_A\circ f\circ \a_\EE)^{-1}.\]
We also have the dual representation of $K$ on $\EE^*\cong \EE$, 
\[ f\mapsto  V_\ff=((U_\ff)^{-1})^*=\on{id}_\EE+\a_\EE^*\circ \ff^*\circ \pr_{A^*}.\]
The two representations commute, due to $\pr_{A^*}\!\circ\, \iota_A=0,\ \a_\EE\circ \a_\EE^*=0$, hence they define a
$K\times K$-representation. The diagonal action gives a self-dual representation 
\[ \Ad^\EE_\ff= U_\ff\circ  V_\ff.\]
Note that $V_\ff$ acts trivially on $A$; hence $\Ad^\EE_\ff$ restricts to $\Ad^A_\ff$. 
Writing $U_\ff=(U_{\on{Inv}_K(\ff)})^{-1}=\id_\EE-\iota_A\circ f\circ \Ad_\ff\circ \a_\EE$, we also have the following expression 
\begin{equation}\label{eq:2ndformula}
\Ad_\ff^\EE=\id_\EE+\a_\EE^*\circ f^*\circ \pr_{A^*}-\iota_A\circ f\circ \Ad_\ff^{TM}\circ\, \a_\EE.\end{equation}
\begin{remark}
Note that this may be written in terms of the maps $\alpha=(\a_\EE,-\pr_{A^*})$ and $\beta=(\iota_A,-\pr_\EE^*)$ from \eqref{eq:alphabeta}, as follows. 
We have the maps $\on{Inv}_K(f)=-f\circ \Ad_f^{TM}\colon TM\to A$ and $f^*\colon A^*\to T^*M$. 
Together, this give a map $(\on{Inv}_K(f),f^*)\colon TM\oplus A^*\to A\oplus T^*M$, which we may regard as a map from 
$(TG)^{(0)}\to \core(TG)$. We hence obtain 
\[ \Ad_\ff^\EE=\id_\EE+\beta\circ (\on{Inv}_K(f),f^*)\circ \alpha.\]
\end{remark}
Let 
\[ h\mapsto \ad^\EE_h=u(h)+v(h)\] 
be the infinitesimal action of $\k$, where $u(h)=\iota_A\circ h\circ \a_\EE$ and $v(h)=-u(h)^*
=-\a_\EE^*\circ h^*\circ \pr_{A^*}$. 

\begin{proposition}
The $\k$-action $\ad^\EE$ coincides with the action as a subalgebroid $\k\subset J^1(A)$. 
\end{proposition}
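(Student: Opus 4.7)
The plan is to evaluate both sides on a spanning set of $\Gamma(\k)$ and compare term by term. Since $\k = T^*M \otimes A$ and both sides of the equality are $C^\infty(M)$-linear in $h \in \Gamma(\k)$, it suffices to treat sections of the form $h = df \otimes \sigma$ with $f \in C^\infty(M)$ and $\sigma \in \Gamma(A)$; these span $\Gamma(\k)$ locally.

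For the action coming from $\k \subset J^1(A)$, I would use the description of the inclusion on sections, $df \otimes \sigma \mapsto j^1(f\sigma) - f\, j^1(\sigma)$ (the same device as in the proof of \eqref{eq:br}). Since the $J^1(A)$-representation is $\nabla_{j^1(\xi)}\zeta = \Cour{\xi,\zeta}$, the action of $h$ on $\zeta \in \Gamma(\EE)$ equals $\Cour{f\sigma,\zeta} - f\,\Cour{\sigma,\zeta}$. The central input is the Leibniz identity for the Dorfman bracket,
\[
\Cour{f\sigma,\zeta} = f\, \Cour{\sigma,\zeta} - (\a_\EE(\zeta)f)\,\sigma + \l\sigma,\zeta\r\, \a_\EE^*(df),
\]
after which the $f\Cour{\sigma,\zeta}$ terms cancel and one is left with a purely pointwise expression in $f$, $\sigma$, $\zeta$.

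On the other side, unpacking $h$ as a bundle map $TM \to A$, $X \mapsto (Xf)\sigma$, with dual $h^*(\alpha) = \alpha(\sigma)\, df$, and using the tautology $\l\pr_{A^*}(\zeta),\sigma\r_{A^*\!,\,A} = \l\zeta,\sigma\r_\EE$, the maps $u(h) = \iota_A \circ h \circ \a_\EE$ and $v(h) = -\a_\EE^* \circ h^* \circ \pr_{A^*}$ evaluate on $\zeta$ to expressions of exactly the same shape as the two summands produced by the Leibniz rule: one summand lies in $\iota_A(A) \subset \EE$, the other in $\a_\EE^*(T^*M) \subset \EE$. The final step is a term-by-term match.

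The hard part will just be the sign bookkeeping, both in the Leibniz identity itself and in the canonical identifications $\EE/A \cong A^*$ and $\EE^* \cong \EE$ (via the metric). A useful sanity check along the way is to specialize to $\zeta = \tau \in \Gamma(A)$: the pairing term then vanishes by isotropy of $A$, and the identity collapses to the Lie algebroid version \eqref{eq:br}, as it must. Beyond this there is no conceptual obstacle.
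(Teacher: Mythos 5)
Your proposal is correct and follows essentially the same route as the paper's proof: reduce to sections $h=\d f\otimes\sigma$ via the inclusion $\d f\otimes\sigma\mapsto j^1(f\sigma)-f\,j^1(\sigma)$, apply the Leibniz/anchor axioms of the Dorfman bracket to compute $\Cour{f\sigma,\zeta}-f\Cour{\sigma,\zeta}=-\l\d f,\a_\EE(\zeta)\r\sigma+\l\sigma,\zeta\r\,\a_\EE^*(\d f)$, and match the two resulting terms with $u(h)$ and $v(h)=-u(h)^*$. The only additions (the explicit $C^\infty(M)$-linearity argument and the sanity check against \eqref{eq:br} on $\Gamma(A)$) are harmless refinements of the same computation.
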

\begin{proof}
	The representation of $J^1(A)$ on $\EE$ is given by the Courant bracket, 
	$  j^1(\sigma)\cdot\tau=\Cour{\sigma,\tau}$. The action of non-holonomic sections $\d f\otimes \sigma$ is computed as 
	$ (d f\otimes \sigma).\tau=\Cour{f\sigma,\tau}-f\Cour{\sigma,\tau}$. After short calculation, one obtains
	\[
	(d f\otimes \sigma).\tau=-\l \d f,\a(\tau)\r \sigma+\l \sigma,\tau\r\,\a^*(\d f).\]
	In terms of $h=d f\otimes \sigma$, the right hand side is $u(\hh)\tau-u(\hh)^*\tau$. 
\end{proof}

\begin{lemma}
The $K$-action on $(\EE,A)$ satisfies 
\[ \a_\EE\circ \Ad^\EE_\ff=\Ad^{TM}_\ff\circ \a_\EE,\ \ \ \ \ \pr_{A^*}\circ\, \Ad^\EE_\ff=
\Ad^{A^*}_\ff\circ \pr_{A^*}.\]
\end{lemma}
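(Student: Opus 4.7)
The plan is to verify both identities by exploiting the factorization $\Ad^\EE_\ff = U_\ff \circ V_\ff$ and using two basic ``orthogonality'' relations: $\a_\EE\circ\a_\EE^*=0$ (a Courant algebroid axiom) and $\pr_{A^*}\circ\iota_A=0$ (since $A=\ker(\pr_{A^*})$). Each of these will collapse one of the two factors when composed with $\a_\EE$ or $\pr_{A^*}$ respectively, reducing the claim to a direct check on the surviving factor.

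For the first identity, I would begin by applying $\a_\EE$ to $V_\ff = \id_\EE + \a_\EE^*\circ f^*\circ\pr_{A^*}$. The relation $\a_\EE\circ\a_\EE^*=0$ gives $\a_\EE\circ V_\ff = \a_\EE$, so
\[ \a_\EE\circ \Ad_\ff^\EE = \a_\EE\circ U_\ff.\]
It remains to show $\a_\EE\circ U_\ff = \Ad^{TM}_\ff\circ \a_\EE$, or equivalently, after inverting both sides, $\a_\EE\circ (\id_\EE+\iota_A\circ f\circ \a_\EE) = (\id_{TM}+\a\circ f)\circ \a_\EE$. This reduces to the identity $\a_\EE\circ \iota_A = \a$, which holds since $\iota_A$ is just the inclusion of $A$ into $\EE$.

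For the second identity, I would run the symmetric argument: apply $\pr_{A^*}$ to $U_\ff = (\id_\EE+\iota_A\circ f\circ \a_\EE)^{-1}$. Since $\pr_{A^*}\circ \iota_A=0$, one gets $\pr_{A^*}\circ(\id_\EE+\iota_A f \a_\EE) = \pr_{A^*}$, and inverting yields $\pr_{A^*}\circ U_\ff = \pr_{A^*}$. Thus
\[ \pr_{A^*}\circ\Ad_\ff^\EE = \pr_{A^*}\circ V_\ff = \pr_{A^*}+ (\pr_{A^*}\circ \a_\EE^*)\circ f^*\circ \pr_{A^*}. \]
The one computation worth spelling out is $\pr_{A^*}\circ\a_\EE^* = \a^*$: for $\mu\in T^*M$ and $\xi\in A$,
\[ \l\pr_{A^*}\a_\EE^*(\mu),\xi\r = \l\a_\EE^*(\mu),\iota_A(\xi)\r_\EE = \l\mu,\a_\EE\iota_A(\xi)\r = \l\a^*(\mu),\xi\r. \]
Substituting gives $\pr_{A^*}\circ V_\ff = (\id_{A^*}+\a^*\circ f^*)\circ \pr_{A^*} = \Ad^{A^*}_\ff\circ \pr_{A^*}$, as desired.

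There is no real obstacle here: the only subtlety is keeping the conventions straight for $\Ad^{TM}_\ff = (\id_{TM}+\a\circ f)^{-1}$ (from the core-anchor formula of Proposition \ref{prop:action}) and $\Ad^{A^*}_\ff=\id_{A^*}+\a^*\circ f^*$ (from the dual version in Proposition \ref{prop:dualact}). Once these are in hand, the two identities are essentially forced by the Courant axiom $\a_\EE\a_\EE^*=0$ and the definition of $\pr_{A^*}$.
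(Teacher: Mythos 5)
Your proof is correct and follows essentially the same route as the paper: factor $\Ad^\EE_\ff=U_\ff\circ V_\ff$, use $\a_\EE\circ\a_\EE^*=0$ and $\pr_{A^*}\circ\iota_A=0$ to kill one factor, and push $\a_\EE$ (resp.\ $\pr_{A^*}$) through the inverse of the other via $\a_\EE\circ\iota_A=\a_A$ (resp.\ its dual $\pr_{A^*}\circ\a_\EE^*=\a_A^*$). The paper only writes out the first identity and declares the second ``similar,'' so your explicit treatment of the second identity is a welcome addition rather than a deviation.
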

\begin{proof}
For the first identity, we calculate
\[ \a_\EE\circ \Ad_\ff^\EE=\a_\EE\circ U_\ff=\a_\EE\circ (\id_\EE+\iota_A\circ f\circ \a_\EE)^{-1}=
(\id_{TM}+\a_A\circ f)^{-1}\circ \a_\EE=\Ad_\ff^{TM}\circ \a_\EE.\]	
The second identity is verified similarly. 
\end{proof}

Given an integration $G\rra M$ of $A\Ra M$, consider the $K\times K$-actions on $\TG|_M\rra TM\oplus A^*$ given by
 $(\ff_1,\ff_2)\mapsto l_{\ff_1}r_{\ff_2}^{-1}$ and 
on $\Pair(\EE)|_M=\EE\oplus \ol{\EE}\rra \EE$ by $(\ff_1,\ff_2)\mapsto (\Ad_{\ff_1}^\EE,\Ad_{\ff_2}^\EE)$. 
Recall the maps $\alpha\colon \Pair(\EE)^{(0)}\to (\TG)^{(0)}$ and $\beta\colon \core(\TG)\to \core(\Pair(\EE))$
from \eqref{eq:alphabeta}.

\begin{proposition}\label{prop:rui}
The subbundle $R|_M=\on{gr}(\alpha)+\on{gr}(\beta)\subset \EE\oplus \ol{\EE}\oplus \ol{\TG}|_M$
is $K\times K$-invariant. 
\end{proposition}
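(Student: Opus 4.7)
The plan is a direct computational verification using the explicit $K$-action formulas established earlier in this appendix. Fix $m \in M$ and $(\ff_1,\ff_2)\in (K\times K)_m$, and take a triple $(\zeta',\zeta,x)$ in the fiber of $R|_M$ over $m$. Decomposing
\[ \TG|_M=(TM\oplus A^*)\oplus (A\oplus T^*M) \]
into units and core, the condition $(\zeta',\zeta,x)\in R|_M=\on{gr}(\alpha)\oplus \on{gr}(\beta)$ amounts to $\tz_{\TG}(x)=\alpha(\zeta')$ together with $\beta(z)=\zeta-\zeta'$, where $z\in \core(\TG)=A\oplus T^*M$ is the core component of $x$. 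I will verify that the transformed triple $(\Ad^{\EE}_{\ff_1}\zeta',\, \Ad^{\EE}_{\ff_2}\zeta,\, l_{\ff_1}r_{\ff_2}^{-1}x)$ still satisfies both equations.

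For the first equation, the identities $\tz_J\circ l_\ff=\Ad^B_\ff\circ \tz_J$ and $\tz_J\circ r_\ff^{-1}=\tz_J$ from Remark~\ref{rem:comments} reduce it to $\Ad_{\ff_1}\alpha(\zeta')=\alpha(\Ad^{\EE}_{\ff_1}\zeta')$, which is the equivariance of $\alpha=(\a_\EE,-\pr_{A^*})$ recorded in the lemma immediately preceding the statement.

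For the second equation I will write $x=w+\gamma+v+\mu$ with $w\in TM$, $\gamma\in A^*$, $v\in A$, $\mu\in T^*M$ (so $w+\gamma=\alpha(\zeta')$), then apply the explicit formulas of Propositions~\ref{prop:action} and~\ref{prop:dualact} to compute $z'=[l_{\ff_1}r_{\ff_2}^{-1}x]\in A\oplus T^*M$ and finally apply $\beta$. The pieces involving $v$ and $\mu$ assemble via the $K$-equivariance of $\iota_A$ and $\a_\EE^*$ into a single term $\Ad_{\ff_2}^{\EE}(\iota_A v-\a_\EE^*\mu)=\Ad_{\ff_2}^{\EE}(\zeta-\zeta')$. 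Using formula~\eqref{eq:2ndformula} to expand $\Ad_{\ff_1}^{\EE}\zeta'-\Ad_{\ff_2}^{\EE}\zeta'$, the desired equality $\beta(z')=\Ad_{\ff_2}^{\EE}\zeta-\Ad_{\ff_1}^{\EE}\zeta'$ will reduce, for each $i=1,2$ separately, to the identity
\[ \Ad_{\ff_i}^{\EE}\circ \iota_A\circ \ff_i=\iota_A\circ \ff_i\circ \Ad_{\ff_i}^{TM}\colon TM\to \EE. \]
This in turn follows from the equivariance relation $\iota_A\circ \Ad_\ff^A=\Ad_\ff^{\EE}\circ \iota_A$ (a consequence of formula~\eqref{eq:2ndformula}) combined with the elementary operator identity $(\id_A+\ff\a_A)^{-1}\ff=\ff(\id_{TM}+\a_A\ff)^{-1}$, which rewrites $\Ad_\ff^A\ff$ as $\ff\Ad_\ff^{TM}$.

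The main obstacle will simply be bookkeeping: four summands in $\TG|_M$ interacting with two separate $K$-actions on the two copies of $\EE$, while keeping signs and the order of compositions straight. The conceptual content, beyond careful computation, is the prior equivariance of the four structure maps $\a_\EE,\iota_A,\pr_{A^*},\a_\EE^*$ and the single operator identity above; once these are in hand, no further input is required.
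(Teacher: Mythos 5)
Your proposal is correct and follows essentially the same route as the paper's own proof: a direct verification of $K\times K$-invariance using the explicit action formulas of Propositions \ref{prop:action} and \ref{prop:dualact} together with formula \eqref{eq:2ndformula}. The paper organizes the computation a bit more economically -- treating the two $K$-factors separately and noting that $K\times\{1\}$ acts trivially on both cores, so $\on{gr}(\beta)$ is invariant for free -- but the identities you isolate (equivariance of $\alpha$, $\iota_A$, $\a_\EE^*$, and the operator identity $\Ad^A_{\ff}\circ \ff=\ff\circ \Ad^{TM}_{\ff}$) are exactly the ones doing the work there as well.
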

\begin{proof}
We show invariance under the action of $K\times \{1\}$; the argument for the second $K$-factor is similar. Note that 
$K\times \{1\}$ acts trivially on $\core(\TG)$ and $\core(\Pair(\EE))$
hence $\on{gr}(\beta)$ is $K\times \{1\}$-invariant. 
Consider on the other hand an element 
\[ (\zeta,\zeta,\alpha(\zeta))\in \on{gr}(\alpha)\]
defined by $\zeta\in \EE\cong  (\EE\times \EE)^{(0)}$.  Here 
\[ \alpha(\zeta)=\a_\EE(\zeta)-\pr_{A^*}\zeta\in TM\oplus A^*=(\TG)^{(0)}.\] 
The formulas from Propositions \ref{prop:action} and \ref{prop:dualact} describe the action $l_\ff$ on 
$TM$ and $A^*$. We obtain 
\begin{align*}
l_\ff \alpha(\zeta)&=l_\ff(\a_\EE(\zeta))-l_\ff(\pr_{A^*}\zeta)\\
&=\Ad_\ff^{TM}(\a_\EE(\zeta))- \Ad_\ff^{A^*}\pr_{A^*}(\zeta)
+\Ad_\ff^A f(\a_\EE(\zeta))+f^*\pr_{A^*}(\zeta)\\
&=\Ad_\ff^{\TG}\alpha(\zeta)+y\\
&=\alpha(\Ad_\ff^\EE\zeta)+y.
\end{align*}
where $y=\Ad_\ff^A f(\a_\EE(\zeta))+f^*\pr_{A^*}(\zeta)=-(\on{Inv}_K(f),f^*)(\alpha(\zeta))$ lies in $\core(\TG)$. Equation \eqref{eq:2ndformula} shows that 
\[ \beta(y)=\zeta-\Ad_\ff^\EE(\zeta).\]
Comparing with 
\[ (\ff,1)\cdot (\zeta,\zeta)=(\Ad_\ff^\EE\zeta,\zeta)=(\Ad_\ff^\EE\zeta,\Ad_\ff^\EE\zeta)+(0,\zeta-\Ad_\ff^\EE\zeta)\]
this confirms that $l_\ff(\alpha(\zeta))\sim_{R|_M} (f,1)\cdot (\zeta,\zeta)$.
\end{proof}

\begin{remark}
Since the 	$K\times K$-actions on $TG|_M=A\oplus TM,\ T^*G|_M=T^*M\oplus A^*$ 
are fully described in terms of $\a_A$, Proposition \ref{prop:rui} does not actually require the integration $G\rra M$.
\end{remark}

\begin{proposition}\label{prop:rr}
Let $(\EE,A)$ be a Manin pair, and $G\rra M$ a source-simply connected groupoid integrating $A\Ra M$. Then there is a unique 
action of $\wh{G}=J^1(G)$ on $\EE$, differentiating to the given action of $\wh{A}=J^1(A)$, and restricting 
to the action of $K\rra M$ described above.  	
\end{proposition}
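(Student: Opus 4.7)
The plan is to integrate the prescribed $\wh A$-representation on $\EE$ via Lie's second theorem for Lie algebroid representations, and then descend from a source-simply connected integration to $\wh G = J^1(G)$ using the canonical $K$-action.

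First, since $A$ integrates (to $G$), so does its jet prolongation $\wh A = J^1(A)$; let $\wt{\wh G}$ denote its source-simply connected integration. Lie's second theorem for Lie algebroid representations then integrates the given $\wh A$-action on the vector bundle $\EE$ to a representation of $\wt{\wh G}$ on $\EE$. Because the infinitesimal action preserves the metric, is compatible with the anchor, and preserves the Courant bracket of holonomic sections, and because source-fibers of $\wt{\wh G}$ are connected, the integrated action is by Courant algebroid automorphisms covering the action of the underlying groupoid on $M$.

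Second, one must show that this action descends along the canonical covering morphism $p\colon \wt{\wh G} \to \wh G$, whose image is the source-identity-component $(\wh G)^\circ$ and whose (fiberwise discrete) kernel we denote by $N$. Since $G$ is source-simply connected, each source-fiber $\sz^{-1}(m)\subset G$ is simply connected; together with the fibration $K_m \to \sz^{-1}(m)\cap\wh G \to \sz^{-1}(m)\cap G$ this gives $N|_m = \pi_1((K_m)^\circ)$, viewed as a discrete central subgroup of the source-simply connected integration $\wt K$ of $\k$. The restriction of the $\wt{\wh G}$-representation to $\wt K|_m$ integrates the Lie algebra representation of $\k_m$ on $\EE|_m$; but by hypothesis the same $\k_m$-representation already integrates canonically to a $K_m$-action on $\EE|_m$. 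By uniqueness of integration of Lie algebra representations, these two integrations agree on $(K_m)^\circ$, and consequently $N|_m$ acts trivially. Hence the action descends to $(\wh G)^\circ$. To extend from $(\wh G)^\circ$ to all of $\wh G$, one uses the decomposition $\wh G = K\cdot (\wh G)^\circ$ (each $\wh g$ factors as $\ff\cdot \wh g^\circ$ with $\ff$ representing a component class of $K$) and defines the $\wh G$-action as the composition of the canonical $K$-action and the $(\wh G)^\circ$-action; well-definedness and the groupoid axioms reduce to the compatibility on $(K_m)^\circ = K_m\cap (\wh G)^\circ|_m$ established in the preceding sentence. The conditions of Definition \ref{def:equivariantmaninpair} are then verified from the infinitesimal versions together with connectedness of source-fibers of $(\wh G)^\circ$.

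Uniqueness is automatic: any candidate $\wh G$-action restricts to the prescribed $K$-action by hypothesis, and on $(\wh G)^\circ$ it is determined by its differential via Lie's second theorem applied to $\wt{\wh G}$. The main obstacle is the descent step, namely the identification $N|_m \cong \pi_1((K_m)^\circ)$ and the argument that this group acts trivially; this is where source-simple-connectedness of $G$ is used in an essential way. A minor subtlety is that $\wh G$ need not be source-connected (since $K_m$ can be disconnected, as already noted in Remark \ref{rem:comments}), so one must genuinely appeal to the given $K$-action, rather than just to an integration result, in order to cover the non-identity components.
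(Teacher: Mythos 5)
Your argument is the same as the paper's: integrate the $J^1(A)$-representation to the source-simply connected cover $\wt{J^1(G)}$, use source-simple connectedness of $G$ to identify the kernel of $\wt{J^1(G)}\to J^1(G)_0$ with the kernel of $\wt K\to K_0$, observe that this kernel acts trivially because the $\k$-action already exponentiates to $K_0$, and then glue the resulting $J^1(G)_0$-action to the canonical $K$-action. The one place where your write-up is too quick is the gluing step: you assert that ``well-definedness and the groupoid axioms reduce to the compatibility on $(K_m)^\circ$,'' but only well-definedness of the assignment $\wh g=\ff\cdot\wh g^\circ\mapsto \ff\cdot(\wh g^\circ\cdot\zeta)$ reduces to agreement of the two actions on $(K_m)^\circ$. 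Multiplicativity requires in addition the conjugation compatibility
\[ \wh{g}\cdot k\cdot \wh{g}^{-1}\cdot \zeta=(\Ad_{\wh{g}}k)\cdot \zeta,\qquad \wh g\in (\wh G)^\circ,\ k\in K,\]
for \emph{all} of $K$, not just its identity component (since $(\ff_1\wh g_1^\circ)(\ff_2\wh g_2^\circ)=(\ff_1\,\Ad_{\wh g_1^\circ}\ff_2)(\wh g_1^\circ\wh g_2^\circ)$); agreement on $(K_m)^\circ$ alone does not imply this. The paper verifies this identity directly from the explicit formula for the $K$-action in Proposition \ref{prop:action}, and your proof should do the same; with that check added, the argument is complete.
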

\begin{proof}	
Let $J^1(G)_0$ denote the source-identity component of the groupoid  $J^1(G)$, and $\wt{J}^1(G)$ its 
source-simply connected cover (see \cite[Section 4.1]{cra:mul}). Then $ \wt{J}^1(G)$ is the source-simply connected integration of 
the jet algebroid $J^1(A)$. The representation of  $J^1(A)$ on $\EE$ integrates to a representation of $\wt{J}^1(G)$ on 
$\EE$.  Since $G$ is source-simply connected, the inclusion $K_0\hra J^1(G)_0$ gives an isomorphism of fundamental 
groups for the source fibers. The kernel of the covering projection $\wt{J}^1(G)\to J^1(G)_0$ coincides with that of 
$\wt{K}\to K_0$. But since the $\k$-action exponentiates to a $K_0$-action, it follows that this kernel acts trivially. 
We conclude that the $J^1(A)$-action integrates to an action of the groupoid $J^1(G)_0$. On the other hand, we also have the action of $K$. The two actions agree over $K_0 =J^1(G)_0\cap K$, and are compatible in the sense that 
\[ \wh{g}\cdot k\cdot \wh{g}^{-1}\cdot \zeta=(\Ad_{\wh{g}}k)\cdot \zeta,\]
for $\wh{g}\in J^1(G)_0,\ k\in K,\ \zeta\in \EE$, as is seen from the explicit formula for the $K$-action. It follows that the 
$J^1(G)_0$-action and $K$-action combine into an action of $J^1(G)$. 
\end{proof}
\end{appendix}

%\bibliographystyle{amsplain} 
%\bibliography{../../../Biblio/ref}

\def\cprime{$'$} \def\polhk#1{\setbox0=\hbox{#1}{\ooalign{\hidewidth
			\lower1.5ex\hbox{`}\hidewidth\crcr\unhbox0}}} \def\cprime{$'$}
\def\cprime{$'$} \def\cprime{$'$} \def\cprime{$'$} \def\cprime{$'$}
\def\polhk#1{\setbox0=\hbox{#1}{\ooalign{\hidewidth
			\lower1.5ex\hbox{`}\hidewidth\crcr\unhbox0}}} \def\cprime{$'$}
\def\cprime{$'$} \def\cprime{$'$} \def\cprime{$'$} \def\cprime{$'$}
\providecommand{\bysame}{\leavevmode\hbox to3em{\hrulefill}\thinspace}
\providecommand{\MR}{\relax\ifhmode\unskip\space\fi MR }
% \MRhref is called by the amsart/book/proc definition of \MR.
\providecommand{\MRhref}[2]{%
	\href{http://www.ams.org/mathscinet-getitem?mr=#1}{#2}
}
\providecommand{\href}[2]{#2}

\end{document}